\documentclass[11pt]{article}
\usepackage{graphicx}
\usepackage{amsmath}
\usepackage{amssymb}
\usepackage{amsthm}
\usepackage{esint}
\usepackage{mathtools}
\usepackage{mathrsfs}
\usepackage[shortlabels]{enumitem}
\usepackage{xparse}
\usepackage{authblk}
\usepackage{hyperref}

\numberwithin{equation}{section}
\newtheorem{definition}{Definition}[section]
\newtheorem{lemma}[definition]{Lemma}
\newtheorem{proposition}[definition]{Proposition}
\newtheorem{proposition*}[definition]{*Proposition}
\newtheorem{definition*}[definition]{*Definition}
\newtheorem{theorem}[definition]{Theorem}
\newtheorem{example}[definition]{Example}
\newtheorem{corollary}[definition]{Corollary}
\newtheorem{remark}[definition]{Remark}
\newcommand{\sub}{\subseteq}

\newcommand{\R}{\mathbb{R}}

\newcommand{\E}{\mathbb{E}}
\newcommand{\M}{\mathbb{M}}
\newcommand{\pr}{\mathbb{P}}

\newcommand*{\n}[1]{\left\|#1\right\|}
\newcommand*{\inn}[2]{\left \langle #1, #2 \right \rangle}

\newcommand*{\capa}[2]{\mathrm{Cap}\left( #1, #2 \right)}
\newcommand*{\wst}{\xrightharpoonup{*}}

\title{Homogenization for the Poisson Equation in Domains Perforated by Random Closed Sets}
\author{Naoto Sato}
\affil{Graduate School of Science, Tohoku University\\
Sendai 980-8578, Japan\\
email: \href{mailto:sato.naoto.t4@dc.tohoku.ac.jp}{sato.naoto.t4@dc.tohoku.ac.jp}}

\begin{document}

\maketitle
\begin{abstract}
    We study the homogenization of the Poisson equation in randomly perforated domains and obtain the strange term effect in the homogenized equation. The perforations are modeled by rescaled germ-grain processes, and the main assumption is stationarity of the capacities of the holes. We emphasize that the potential in the homogenized equation is constant, despite the possibly nonstationary spatial distribution of the holes. We also establish corrector results.
\end{abstract}
\section{Introduction}
\label{sect: intro}
In this article, we consider the asymptotic behavior as $\varepsilon \searrow 0$ of solutions $u^{\varepsilon}$ to the Poisson equations
\begin{equation}
    \label{eq: poisson intro perforated}
    \begin{cases}
        -\Delta u^{\varepsilon} = f & \text{ in $D^{\varepsilon}$}\\
        u^{\varepsilon} = 0 & \text{ on $\partial D^{\varepsilon}$}
    \end{cases}
\end{equation}
posed on a family of perforated domains $D^{\varepsilon}$. The domain $D^{\varepsilon}$ is obtained by perforating a fixed bounded domain $D \sub \R^{d}, d \ge 3$. One consequence of our main result (Theorem \ref{thm: main thm}) is, roughly speaking, 
\begin{theorem}[a special case of Example \ref{thm: c alpha}]
    \label{thm: intro}
    Let $\{(z_{i}, K_{i})\}_{i = 1}^{\infty}$ be a countable collection of random pairs $(z_{i}, K_{i})$, where $z_{i} \in \R^{d}$ and $K_{i} \sub \R^{d}$ is an arbitrary compact set centered at the origin. Suppose that the number of points $z_{i}$ in any cube has a finite first moment, and that the process $\nu := \{(z_{i}, \mathrm{Cap}K_{i})\}_{ i= 1}^{\infty}$ is stationary, that is, the law of $(z_{i} + \tau, \mathrm{Cap}K_{i})_{i = 1}^{\infty}$ is the same as that of $(z_{i}, \mathrm{Cap}K_{i})_{i = 1}^{\infty}$ for every $\tau \in \R^{d}$. Here, $\mathrm{Cap}K_{i}$ denotes the capacity of $K_{i}$ (see Section \ref{sect: capacity}). Let $W \supseteq D$ be a bounded domain in $\R^{d}$ with $\partial W$ of Lebesgue measure zero and define 
    \[H^{\varepsilon} := \bigcup_{i: \varepsilon z_{i} \in W}(\varepsilon z_{i} +\varepsilon^{\frac{d}{d - 2}}K_{i}),\, D^{\varepsilon} := D \setminus H^{\varepsilon}\]
    (the domain $W$ serves as an observation window of the processes $\{\varepsilon z_{i}\}_{i}$). Suppose further that there exists a nonrandom constant $\alpha > 0$ such that every hole $K_{i}$ satisfies
    \[\frac{\mathrm{Cap}K_{i}}{(\mathrm{diam}K_{i})^{d - 2}} \ge \alpha.\]
    Then we may define a nonnegative random variable $C_{0}$ such that if $C_{0} < \infty$ a.s., then with probability one, the solutions $u^{\varepsilon} \in H^{1}_{0}(D^{\varepsilon})$ to the problems \eqref{eq: poisson intro perforated}, when extended by $0$, converge weakly in $H^{1}_{0}(D)$ to the solution $u \in H^{1}_{0}(D)$ to the problem
    \begin{equation}
        \label{eq: homogenized eq intro}
        \begin{cases}
            -\Delta u + C_{0}u = f & \text{ in $D$}\\
            u = 0 \text{ on $\partial D$}.
        \end{cases}
    \end{equation}
    Further, $C_{0}$ admits an explicit representation \eqref{eq: def of C0} as a conditional expectation of a spatial average of $\mathrm{Cap}K_{i}$. Moreover, if $\nu$ is ergodic, then $C_{0}$ is nonrandom.
\end{theorem}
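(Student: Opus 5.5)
The proof follows the energy/oscillating test function method of Cioranescu--Murat, adapted to random holes, with the capacity density obtained from an ergodic theorem for the stationary marked process $\nu$.

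\textbf{Step 1: the constant $C_0$.} By stationarity of $\nu=\{(z_i,\mathrm{Cap}K_i)\}$ and the finite first moment of the counting measure, the multiparameter (Wiener--Nguyen--Zessin) spatial ergodic theorem applies to the random measure $\mu:=\sum_i \mathrm{Cap}K_i\,\delta_{z_i}$. It gives, almost surely and for every bounded Borel set $A$ with $|\partial A|=0$,
\[
\varepsilon^{d}\sum_{i:\,\varepsilon z_i\in A}\mathrm{Cap}K_i\to C_0|A|,
\qquad
C_0=\E\Bigl[\textstyle\sum_{z_i\in[0,1)^d}\mathrm{Cap}K_i \,\Big|\, \mathcal I\Bigr],
\]
where $\mathcal I$ is the invariant $\sigma$-algebra. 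The convergence first holds on the countable family of dyadic cubes and then extends by approximation. This $C_0$ is the candidate for \eqref{eq: def of C0}, and it is nonrandom when $\nu$ is ergodic. Because $\mathrm{Cap}K_i$ may fail to be integrable, I would first truncate the capacities at a level $M$, apply the ergodic theorem, and then let $M\to\infty$ using monotone convergence. On $\{C_0<\infty\}$ this yields vague convergence $\varepsilon^{d}\sum_i\mathrm{Cap}K_i\,\delta_{\varepsilon z_i}\to C_0\,dx$ on $W$.

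\textbf{Step 2: correctors.} For each hole take the capacitary potential $w_i$ of $\varepsilon^{d/(d-2)}K_i$ relative to a ball $B(\varepsilon z_i,r_i)$. Set $w^\varepsilon=1-\max_i w_i$ (or $1-\sum_i w_i$), so that $w^\varepsilon=0$ on $H^\varepsilon$. Since $\mathrm{Cap}(\varepsilon^{d/(d-2)}K_i)=\varepsilon^d\mathrm{Cap}K_i$, we get
\[
\|\nabla w^\varepsilon\|_2^2\approx\sum_i\varepsilon^d\mathrm{Cap}K_i\to C_0|W|.
\]
The key requirement is the Cioranescu--Murat hypothesis: for all $\phi\in C_c^\infty(D)$,
\[
-\Delta w^\varepsilon\,\phi\;\to\;C_0\phi ,
\]
equivalently $\int\nabla w^\varepsilon\cdot\nabla(\phi v_\varepsilon)\to C_0\int\phi v$ along $H^1$-weakly converging $v_\varepsilon$ vanishing on the holes. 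The main obstacle is here. The points $z_i$ may cluster at arbitrarily small distances, and the holes have arbitrary shapes, so the potentials interact and the balls $B(\varepsilon z_i, r_i)$ can overlap.

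\textbf{Step 3: handling clustering.} This is where the hypothesis $\mathrm{Cap}K_i\ge\alpha(\mathrm{diam}K_i)^{d-2}$ enters. It controls the diameter by the capacity, so $\varepsilon^{d/(d-2)}\mathrm{diam}K_i\lesssim\varepsilon^{d/(d-2)}(\mathrm{Cap}K_i)^{1/(d-2)}$. Consequently the union of holes whose capacity exceeds $M$ has total capacity at most $\varepsilon^d\sum_{\mathrm{Cap}K_i>M}\mathrm{Cap}K_i$, which tends to $0$ as first $\varepsilon\to0$ and then $M\to\infty$ by Step 1. Likewise, pairs of points at distance less than $\delta\varepsilon$ contribute a capacity mass that vanishes as $\delta\to0$, again by the ergodic theorem applied to the stationary process of close pairs. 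I would therefore split the holes into \emph{good} ones (capacity $\le M$, separated by at least $\delta\varepsilon$ from all other points) and \emph{bad} ones. For the good holes, take cutoff radius $r_i=\delta\varepsilon/2$ and the disjoint construction above, whose energy identity is exact up to $O(\varepsilon^{d/(d-2)}/\delta\varepsilon)$ corrections. The bad holes affect the limit only through a capacity of order $o(1)$ as $M\to\infty$ and $\delta\to0$; this is shown by comparing $u^\varepsilon$ with the solution on the domain perforated only by good holes, using the monotonicity of $u^\varepsilon$ in the domain and $\mathrm{Cap}$-continuity of the limit via $\Gamma$-convergence.

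\textbf{Step 4: conclusion.} With Step 3 in hand, the standard argument applies. The bound $\|u^\varepsilon\|_{H^1_0}\le C\|f\|$ gives weak subsequential limits $u$. Testing with $w^\varepsilon\phi$ and passing to the limit using Step 2 shows that $u$ solves \eqref{eq: homogenized eq intro}. Uniqueness of the solution then gives convergence of the full family, almost surely on $\{C_0<\infty\}$.
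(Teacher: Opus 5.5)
Your overall architecture (ergodic theorem for $C_0$, good/bad split by capacity truncation and thinning, $\mathrm{Cap}K_i\ge\alpha(\mathrm{diam}K_i)^{d-2}$ to put good holes in small disjoint balls, Cioranescu--Murat test functions) matches the paper's in broad strokes. However, the step on which the generality of the hole shapes actually hinges is missing.

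In Step 4 you must pass to the limit in $\int\nabla w^\varepsilon\cdot\nabla(\phi u^\varepsilon)$, where $u^\varepsilon$ converges only weakly. Since $u^\varepsilon$ vanishes on the holes, this equals $\langle\hat\mu^\varepsilon,\phi u^\varepsilon\rangle$. Here $\hat\mu^\varepsilon=\sum_i(-\partial_n w_i)\,\delta_{\partial B(\varepsilon z_i,r_i)}$ is the part of $-\Delta w^\varepsilon$ carried by the cut-off spheres. So in effect you need $\hat\mu^\varepsilon\to C_0\chi_W$ \emph{strongly} in $H^{-1}$; this is \eqref{eq: H5 for e}. For balls, $\partial_n w_i$ is an explicit constant and this follows as in Lemma~\ref{thm: giunti 5_3}. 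For an arbitrary compact $K_i$, the normal derivative is neither explicit nor constant on the sphere. Your Step 3 controls only clustering and energies. Convergence of $\|\nabla w^\varepsilon\|_2^2$, or even of the local energy densities, gives at most distributional convergence of $\hat\mu^\varepsilon$, which does not pass to the limit against a weakly convergent $u^\varepsilon$.

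The paper closes exactly this gap in four moves:
\begin{enumerate}
\item It shrinks each good hole into $B_{\varepsilon^{d/(d-2)}(\rho_z\wedge M)}(\varepsilon z)$.
\item By the maximum principle, $0\le\hat\mu^\varepsilon_M\le\mu^\varepsilon_M$, where $\mu^\varepsilon_M$ is the explicit measure for these enclosing balls and converges strongly in $H^{-1}$.
\item It then deduces strong $H^{-1}$ compactness of $\hat\mu^\varepsilon_M$ via \cite[Lemma 2.8]{cioranescu1997strange}.
\item It identifies the limit through $\int|\nabla\hat e^\varepsilon_{g,M}|^2\varphi=\langle\hat\mu^\varepsilon_M,\varphi\rangle+o(1)$ and Lemma~\ref{thm: weak convergence of the modulus of grad egM}.
\end{enumerate}
The upper bound in that last lemma is Maz'ya's comparison, Lemma~\ref{thm: Maz'ya lemma}. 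You invoke the same comparison without proof when you assert your energy identity ``up to $O(\varepsilon^{d/(d-2)}/\delta\varepsilon)$ corrections.'' A direct proof that $-\partial_n w_i$ is uniform on the sphere up to relative error $o(1)$, uniformly in $i$, would also work. Some such argument has to be supplied.

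A smaller point concerns Step 1. Truncating $\mathrm{Cap}K_i$ at level $M$ and letting $M\to\infty$ by monotone convergence gives only $\liminf_\varepsilon\varepsilon^d\sum_{\varepsilon z_i\in A}\mathrm{Cap}K_i\ge C_0|A|$. You need the matching upper bound, including in Step 3 to get $\limsup_\varepsilon\varepsilon^d\sum_{\mathrm{Cap}K_i>M}\mathrm{Cap}K_i\to0$ as $M\to\infty$. That requires the ergodic theorem for a function that is only conditionally integrable given the invariant $\sigma$-algebra. You can get it by applying the usual theorem on the invariant events $\{C_0\le n\}$, or by citing a version like Lemma~\ref{thm: averaging h over A}.

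Your treatment of the bad holes, by comparing solutions and appealing to $\Gamma$-convergence, is workable but heavier than the paper's device. The paper adds the capacitary potential $\hat e^\varepsilon_b$ of the bad holes to the test function and notes that $\hat e^\varepsilon_b\to0$ strongly in $H^1$, because $\mathrm{Cap}(H^\varepsilon_b,D^\varepsilon_b)\to0$.
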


As can be easily seen from the proof of Theorem \ref{thm: main thm}, the solutions $u^{\varepsilon}$ cannot converge strongly in $H^{1}(D)$ unless $C_{0} = 0$. However, as observed by Cioranescu and Murat \cite{cioranescu1997strange} in the deterministic case, we can use the functions $\hat{e}^{\varepsilon}$ that are constructed in Section \ref{sect: proof of the main thm} and are independent of the source term $f$ to make ``oscillate'' the function $u$ and to establish the strong convergence $\n{u^{\varepsilon} - (1 - \hat{e}^{\varepsilon})u}_{H^{1}_{0}(D)} \to 0$ a.s. and in $L^{2}(\pr)$ (Proposition \ref{thm: corrector}). In other words, we also prove corrector results.

Although homogenization problems related to \eqref{eq: poisson intro perforated} are classical (see, e.g., \cite{cioranescu1997strange,dalmaso1987wiener, dalmaso1994new, kac1974probabilistic, marchenko1974boundary, marchenko2006homogenization, papanicolaou1980diffusion, rauch1975potential}), these problems are still actively investigated when the domains $D^{\varepsilon}$ are randomly perforated (\cite{caffarelli2009random, calvo2015homogenization,calvojurado2016homogenization,giunti2018homogenization,giunti2021convergence,hoefer2024convergence, scardia2025homogenisation}). In \cite{caffarelli2009random}, Caffarelli and Mellet considered obstacle problems on $D^{\varepsilon}$. While the holes are located on the lattice $\varepsilon\mathbb{Z}^{d}$, the restrictions on the shape of holes are imposed only on their diameters and capacities. On the other hand, Giunti, H{\"o}fer, and Vel{\'a}zquez \cite{giunti2018homogenization} showed the same homogenization result as Theorem \ref{thm: intro} holds when the centers of holes are distributed according to a stationary point process on $\R^{d}$, if holes $K_{i}$ are all balls.

The restriction on the shape of holes in \cite{giunti2018homogenization} is due to the use of explicit formulae for the capacitary potentials of balls. We closely follow the discussion in \cite{giunti2018homogenization} while replacing parts of hard arguments involving the explicit formulae with softer ones that make use of the maximum principle and Lemma \ref{thm: Maz'ya lemma} by Maz'ja \cite{MR817985}, and thereby succeed in allowing the homogenization to occur under  fairly mild conditions on both the distribution and the geometry of holes $K_{i}$. We also mention that the set of assumptions on point processes that we use to generate the set of holes is slightly different from that in \cite{giunti2018homogenization}. Instead, we follow the framework presented in \cite{Bas25}. This modification allows us to use a strong law of large numbers for stationary marked point processes with arbitrary mark space (Lemma \ref{thm: averaging h over A}).

This article is organized as follows. In Section \ref{sect: Preliminaries}, we collect some preliminaries needed to state the main theorem (Theorem \ref{thm: main thm}). In Section \ref{sect: statement of the main result}, we present the precise statement of Theorem \ref{thm: main thm} and provide an example. Since Theorem \ref{thm: main thm} can be regarded as a norm resolvent convergence of Dirichlet Laplacians, by the standard argument we also obtain a convergence result for the solutions to the heat equations in perforated domains (Corollary \ref{thm: convergence of heat eq}). Section \ref{sect: proof of the main thm} is devoted to the proof of the main theorem. In Section \ref{sect: corrector}, we present corrector results. Finally, in Section \ref{sect: appendix}, we gather auxiliary results to make the presentation self-contained.
\section{Preliminaries}
\label{sect: Preliminaries}
\subsection{Notations}
\label{sect: notations}
\begin{itemize}
    \item We denote by $\mathcal{B}(E)$ the Borel $\sigma$-algebra of a topological space $E$.
    \item We denote by $\#A$ the number of points in a set $A$.
    \item We denote by $\mathcal{L}^{d}$ the $d$-dimensional Lebesgue measure. For $\mathcal{L}^{d}$-measurable $A \sub \R^{d}$, $|A| := \mathcal{L}^{d}(A)$.
    \item We denote by $\mathcal{C}$ the space of compact subsets in $\R^{d}$ equipped with the Hausdorff metric.
    \item For $x \in \R^{d}$ and $r \ge 0$, $U_{r}(x)$ and $B_{r}(x)$ denote the open and closed balls of radius $r$ centered at $x$, respectively.
    \item For an open $U \sub \R^{d}$, $C^{\infty}_{c}(U)$ denotes the class of $\R$-valued infinitely differentiable functions on $U$ with compact support.
    \item For $u \in H^{1}_{0}(U)$ with $U \sub \R^{d}$ open, the zero extension $\tilde{u} \in H^{1}(\R^{d})$ of $u$ is also denoted by $u$.
    \item For $a, b \in [-\infty, \infty]$, we use the notation $a \lesssim b$ if $a \le C b$ for a constant $C$ that depends only on the dimension $d$. Moreover, we use the notation $a \asymp b$ if both $a \lesssim b$ and $b \lesssim a$ hold.
    \item Throughout this paper we fix a probability space $(\Omega, \mathcal{F}, \pr)$. By random elements in a measurable space $E$, we mean $\mathcal{F}$-measurable maps from $\Omega$ to $E$.
    \item $\psi[X]$ denotes the random measure determined by the relation \eqref{eq: def of psi X}. For details, see the next section.
\end{itemize}
\subsection{Capacity}
\label{sect: capacity}
As we noted in Introduction, the potential $C_{0}$ in the homogenized equation \eqref{eq: homogenized eq intro} is described by a sort of average of the capacity of holes.
\begin{definition}
    \label{def: capacity}
    For a compact $F \sub \R^{d}$ and nonempty open $U \sub \R^{d}$ with $F \sub U$, define
    \begin{equation}
        \label{def: mathcal K_ F, U}
        \mathcal{K}_{F, U} := \{\varphi \in C_{c}^{\infty}(U): 0 \le \varphi \le 1, \varphi = 1 \text{ on $F$}\}
    \end{equation}
    and
    \begin{equation}
        \label{def: K_ F, U}
        K_{F, U} := \text{ the completion of $\mathcal{K}_{F, U}$ with respect to the norm $\n{\nabla (\cdot)}_{L^{2}(U)}$}.
    \end{equation}

    If $U$ is bounded, then $K_{F, U}$ is nonempty convex closed in the Hilbert space $H^{1}_{0}(U)$, and hence the problem
    \begin{equation}
        \label{def: cap pot in terms of minimization of energy}
        \text{minimize} \int_{U}|\nabla u|^{2}\,dx \text{ subject to } u \in K_{F, U}
    \end{equation}
    admits the unique solution $e \in K_{F, U}$. We call this $e$ the capacitary potential of $F$ in $U$. The minimum $\int_{U}|\nabla e|^{2}$ is called the capacity of $F$ in $U$ and is denoted by $\capa{F}{U}$.

    If $U = \R^{d}$, then $K_{F, \R^{d}}$ is nonempty convex closed in the Hilbert space $\{u \in L^{2^{*}}: \nabla u \in L^{2}\}$ equipped with the inner product $(u, v) \mapsto \inn{\nabla u}{\nabla v}_{L^{2}}$. The unique minimizer $e$ of the problem \eqref{def: cap pot in terms of minimization of energy} is called the capacitary potential of $F$. The minimum is called the capacity of $F$ and is denoted by $\mathrm{Cap}F$.
\end{definition}

By definition, capacity is invariant under rigid motion, and has the following scaling properties:
\begin{equation}
    \label{eq: scaling property of capacity}
    \mathrm{Cap}(rF, rU) = r^{d - 2}\mathrm{Cap}(F, U)
\end{equation}
and
\begin{equation}
    \mathrm{Cap}(rF) = r^{d - 2}\mathrm{Cap}F
\end{equation}
for every $r > 0$, compact $F \sub \R^{d}$, and bounded open $U \sub \R^{d}$ with $U \supseteq F$.

The next lemma is the key to identifying $C_{0}$:
\begin{lemma}[{\cite[Lemma 2.5.1/1]{MR817985}}]
    \label{thm: Maz'ya lemma}
    For $\lambda > 1$, define 
    \begin{equation}
        f_d(\lambda) := \begin{cases}
            \frac{1}{\lambda - 1}\left( 1 + 2\log \lambda \right) & \text{ if $d = 3$}\\
            \frac{2}{d - 3}\frac{1}{\lambda - 1} & \text{ if $d \ge 4$}.
        \end{cases}
    \end{equation}
    Then for every compact $F \sub \R^{d}$ and $R > r > 0$ such that $F \sub B_{r}(0)$, 
    \begin{equation}
        \capa{F}{U_{R}(0)} \leq \left(1 + f_{d}\left(\frac{R}{r}\right)\right)\mathrm{Cap}(F).
    \end{equation}
\end{lemma}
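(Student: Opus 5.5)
The plan is to build a competitor for $\capa{F}{U_{R}(0)}$ by cutting off the whole-space capacitary potential. Let $e$ be the capacitary potential of $F$ in $\R^{d}$ (Definition \ref{def: capacity}), and let $\eta$ be a radial Lipschitz cutoff with $\eta = 1$ on $B_{r}(0)$ and $\eta = 0$ outside $U_{R}(0)$. Then $e\eta$, after the usual mollification and approximation by elements of $\mathcal{K}_{F,\R^{d}}$, belongs to $K_{F, U_{R}(0)}$. Hence
\[
\capa{F}{U_{R}(0)} \le \int |\nabla(e\eta)|^{2}\,dx .
\]
The whole argument reduces to estimating this energy by $\mathrm{Cap}F$ plus a small multiple of $\mathrm{Cap}F$.

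The first step is an exact energy identity that avoids a lossy Young inequality. Expanding, and using $|\nabla(e\eta)|^{2} = \nabla e\cdot\nabla(e\eta^{2}) + e^{2}|\nabla\eta|^{2}$, we obtain
\[
\int |\nabla (e\eta)|^{2} = \int \nabla e\cdot \nabla(e\eta^{2}) + \int e^{2}|\nabla \eta|^{2}.
\]
Since $e$ is harmonic in $\R^{d}\setminus F$ and $e\eta^{2} = e$ on a neighbourhood of $F$, the first term should equal $\int|\nabla e|^{2} = \mathrm{Cap}F$. Formally this is $\int \eta^{2}e\,d\mu$ with $\mu = -\Delta e$ the equilibrium measure, which is supported on $F \sub B_r(0)$ and on whose support $e = \eta = 1$ quasi-everywhere. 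Equivalently, $e - e\eta^{2}$ lies in the closure of test functions vanishing near $F$, to which $e$ is energy-orthogonal by the variational characterization. We are therefore left to show
\[
\int e^{2}|\nabla\eta|^{2}\,dx \le f_{d}(R/r)\,\mathrm{Cap}F .
\]

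The second step uses pointwise decay of $e$ outside $B_{r}(0)$. Writing $e = \Gamma * \mu$ with $\Gamma(x) = \frac{1}{(d-2)\sigma_{d}}|x|^{2-d}$, where $\sigma_{d}$ is the area of the unit sphere, and total mass $\mu(\R^{d}) = \mathrm{Cap}F$, we get for $|x| > r$
\[
e(x) \le \frac{\mathrm{Cap}F}{(d-2)\sigma_{d}(|x|-r)^{d-2}} .
\]
Since we also have $0\le e\le 1$, we bound $e^{2}\le e$ and insert this estimate. This keeps the bound linear in $\mathrm{Cap}F$, which is essential. We then choose $\eta$ to optimize the resulting one-dimensional integral
\[
\int_{r}^{R} \frac{\rho^{d-1}}{(\rho - r)^{d-2}}\,\eta'(\rho)^{2}\,d\rho .
\]

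This last step is the main obstacle. A linear cutoff $\eta'(\rho) = -1/(R-r)$ on $(r,R)$ makes the integral diverge at $\rho = r$. The cutoff must therefore stay equal to $1$ on a slightly larger ball, or its gradient must be weighted so that it vanishes near $\rho = r$. Alternatively, one can use $e\le 1$ near the inner sphere and the decay bound only away from it.

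I would first try $\eta$ equal to $1$ on $B_{\rho_{0}}(0)$ for a suitable intermediate radius, for instance $\rho_{0}$ comparable to $\sqrt{rR}$ or to $r + (R-r)/2$. On the annulus $\rho_{0} < |x| < R$, I would take $\eta$ linear in a power of $\rho$, so that the weighted integral is computable. For $d\ge 4$ this should produce a bound of order $\frac{1}{d-3}\frac{1}{\lambda-1}$ with $\lambda = R/r$. For $d=3$ the integrand behaves like $\rho^{2}/(\rho-r)$ and leaves a $\log\lambda$ term. This matches the shape of $f_{d}$.

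Getting exactly the constants $2/(d-3)$ and $1+2\log\lambda$ will require tuning $\rho_{0}$ and the profile of $\eta$. If the direct computation gives a slightly worse constant, I would revisit the bound $e^{2}\le e$: the refinement $e^{2}\le e\cdot\min\bigl(1,(r/(|x|-r))^{d-2}\bigr)$, which uses $\mathrm{Cap}F\le \mathrm{Cap}B_{r}(0)$, gains the extra decay needed.
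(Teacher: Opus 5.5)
The paper does not prove this lemma; it quotes it from Maz'ja. So your argument has to stand on its own, and as planned it does not. The gap is in your second step, and it is an obstruction, not a matter of tuning $\rho_0$ or the profile of $\eta$.

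\textbf{Why the pointwise route fails.} After $e^2\le e$ and the pointwise bound $e(x)\le \frac{\mathrm{Cap}F}{(d-2)\sigma_d}(|x|-r)^{2-d}$, the cutoff enters only through $\int_r^R w\,\eta'^2\,d\rho$ with $w(\rho)=\rho^{d-1}(\rho-r)^{2-d}$. By Cauchy--Schwarz, $1\le\int w\eta'^2\cdot\int w^{-1}$, so no profile $\eta$ does better than
\[
\frac{1}{d-2}\Bigl(\int_1^{\lambda}\frac{(t-1)^{d-2}}{t^{d-1}}\,dt\Bigr)^{-1},\qquad \lambda=R/r .
\]
This fails at both ends:
\begin{itemize}
\item As $\lambda\to\infty$ it is $\sim\frac{1}{(d-2)\log\lambda}$, not the order $\frac{1}{(d-3)(\lambda-1)}$ you expect. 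For $\rho\gg r$ the weight is $\approx\rho$, which is a two-dimensional capacity.
\item As $\lambda\downarrow1$ it is $\sim\frac{d-1}{(d-2)(\lambda-1)^{d-1}}$. Concretely, for $d=4$, $\lambda=2$ it is about $7.3$, while $f_4(2)=2$; $\lambda=2$ is the value the paper uses in Section 5.
\end{itemize}
Your refinement $e\le\min\{1,(r/(|x|-r))^{d-2}\}$ repairs the large-$\lambda$ end. It changes nothing for $\rho\le2r$, so $\lambda$ near $1$ still fails. Using $e^2\le1$ near $\partial B_r(0)$ instead loses the factor $\mathrm{Cap}F$, which can be arbitrarily small.

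The root cause is that a pointwise bound linear in $\mathrm{Cap}F$ cannot beat $(|x|-r)^{2-d}$, since $F$ may sit next to a single point of $\partial B_r(0)$. Pointwise information cannot see that $e$ is then small on most of each sphere.

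\textbf{The missing ingredient: exact spherical averages.} The function $e$ is harmonic in $\{|x|>r\}$, has flux $\mathrm{Cap}F$ through each $\partial B_\rho(0)$, and decays at infinity. Equivalently, apply Newton's theorem to $e=\Gamma*\mu$ with $\mathrm{spt}\,\mu\subseteq B_r(0)$. Either way,
\[
\int_{\partial B_\rho(0)}e\,d\mathcal{H}^{d-1}=\frac{\rho}{d-2}\,\mathrm{Cap}F\qquad(\rho\ge r),
\]
which has no singularity at $\rho=r$. Combine this with $e\le (r/|x|)^{d-2}$ outside $B_r(0)$, which holds because the potential of $F$ is dominated by that of $B_r(0)$. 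Your bound via $\mathrm{Cap}F\le\mathrm{Cap}B_r(0)$ also suffices, with more computation. For radial $\eta$ this gives
\[
\int e^2|\nabla\eta|^2\,dx\le\int_r^R\eta'(\rho)^2\Bigl(\sup_{\partial B_\rho(0)}e\Bigr)\int_{\partial B_\rho(0)}e\,d\mathcal{H}^{d-1}\,d\rho\le\frac{r^{d-2}\,\mathrm{Cap}F}{d-2}\int_r^R\eta'(\rho)^2\rho^{3-d}\,d\rho .
\]
Choosing $\eta(\rho)=\frac{R^{d-2}-\rho^{d-2}}{R^{d-2}-r^{d-2}}$ on $[r,R]$ makes the right-hand side equal to $\frac{\mathrm{Cap}F}{\lambda^{d-2}-1}$. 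Since $\lambda^{d-2}-1\ge(d-2)(\lambda-1)$, this is at most $f_d(\lambda)\,\mathrm{Cap}F$ for all $d\ge3$ and $\lambda>1$, and it is sharp for $F=B_r(0)$.

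Your first step, the identity $\int|\nabla(e\eta)|^2=\mathrm{Cap}F+\int e^2|\nabla\eta|^2$, is correct and is what makes this work. If $F$ touches $\partial B_r(0)$, justify $\int\nabla e\cdot\nabla(e\eta^2-e)=0$ by running the argument with $r'\downarrow r$.
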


\begin{remark}
    There is an obvious typo in the statement of \cite[Lemma 2.5.1/1]{Mazya2011sobolevspaces}. The correct statement is written in the older edition, \cite[Lemma 2.5.1/1]{MR817985}.
\end{remark}
\subsection{Marked point processes}
\label{sect: mpp}
Following \cite[Section 2.1]{Bas25}, we define marked point processes.
\begin{definition}
    \label{def: mpp}
    Let $\mathcal{K}$ be a complete separable metric space.
    \begin{enumerate}[(1)]
        \item A subset $Y \sub \R^{d} \times \mathcal{K}$ is called admissible if it is a graph of a $\mathcal{K}$-valued function whose domain $\mathrm{dom}Y \sub \R^{d}$ is locally finite.
        \item The class of all admissible sets in $\R^{d} \times \mathcal{K}$ is denoted by $\mathbb{M}^{d}_{\mathcal{K}}$.
        \item The class $\mathbb{M}^{d}_{\mathcal{K}}$ is endowed with the $\sigma$-algebra
        \[\mathcal{M}^{d}_{\mathcal{K}} := \sigma \left( Y \mapsto \# (Y \cap A): A \in \mathcal{B}(\R^{d} \times \mathcal{K}) \right).\]
        \item A random element in $(\mathbb{M}^{d}_{\mathcal{K}}, \mathcal{M}^{d}_{\mathcal{K}})$ is called a marked point process (in short MPP) on $\R^{d}$ with marks in $\mathcal{K}$.
        \item Given an MPP $X$ on $\R^{d}$ with marks in $\mathcal{K}$, $\mathrm{dom}X$ is identified with the MPP with marks in a one-point set and is called the ground process of $X$.
    \end{enumerate}
\end{definition}

\begin{remark}
    The measurable space $(\mathbb{M}^{d}_{\mathcal{K}}, \mathcal{M}^{d}_{\mathcal{K}})$ that we have just defined may be identified with a class $\mathcal{N}^{\#g}_{\R^{d} \times \mathcal{K}}$ of Borel measures on $\R^{d} \times \mathcal{K}$, which is defined in \cite[Definition 9.1.II]{daley2008introduction} via the map $Y \mapsto \#(Y \cap \cdot)$. This correspondence is a known fact (see, e.g., \cite[Lemma 3.1.4]{schneider2008stochastic}). For the reader's convenience we state and prove the precise statement in Proposition \ref{thm: correspondence between N hash g and admissible sets}. Thus, we can use the theory of marked point processes developed in \cite{daley2008introduction}.
\end{remark}
For each $\tau \in \R^{d}$, we define the shift $S_{\tau}: \mathbb{M}^{d}_{\mathcal{K}} \to \mathbb{M}^{d}_{\mathcal{K}}$ by 
\[S_{\tau}Y := \{(y - \tau, k): (y, k) \in Y\}.\]
The map $\R^{d} \times \mathbb{M}^{d}_{\mathcal{K}} \ni (\tau, Y) \mapsto S_{\tau}Y \in \mathbb{M}^{d}_{\mathcal{K}}$ is known to be jointly measurable (see, for example, \cite[Lemma 12.1.I]{daley2008introduction}). Hence, we may introduce the notions of stationarity and ergodicity of marked point processes:
\begin{definition}
    Let $X$ be a marked point process on $\R^{d}$ with marks in $\mathcal{K}$.
    \begin{enumerate}[(1)]
        \item The MPP $X$ is called stationary if every shift $\mathbb{M}^{d}_{\mathcal{K}} \to \mathbb{M}^{d}_{\mathcal{K}}$ preserves the law of $X$.
        \item We define the invariant sub-$\sigma$-algebgra $\mathcal{I}_{X}$ of $\mathcal{F}$ by
        \[\mathcal{I}_{X} := \{\{X \in \mathcal{A}\}: \mathcal{A} \in \mathcal{M}^{d}_{\mathcal{K}}, \pr\left( X \in \mathcal{A} \triangle S_{\tau}\mathcal{A} \right) = 0 \text{ for every $\tau \in \R^{d}$}\}.\]
        \item $X$ is said to be ergodic if $\mathcal{I}_{X}$ is trivial, i.e., $\pr(A) = 0\text{ or }1$ for every $A \in \mathcal{I}_{X}$.
    \end{enumerate}
\end{definition}

Now suppose that $X$ is stationary and that $\E[\#(X \cap A)] < \infty$ for every bounded Borel $A \sub \R^{d} \times \mathcal{K}$. Then by \cite[Lemma 12.2.III]{daley2008introduction} there is an $\mathcal{I}_{X}$-measurable random measure $\psi$ on $\mathcal{K}$ such that for every nonnegative Borel measurable function $f$ on $\mathcal{K}$ and every $A \in \mathcal{B}(\R^{d})$ with $0 < |A| < \infty$, we have
\begin{equation}
    \label{eq: def of psi X}
    \E\left[\frac{1}{|A|}\sum_{(x, k) \in X, x \in A}f(k)\Big| \mathcal{I}_{X}\right] = \int_{\mathcal{K}} f \,d\psi \text{ a.s.}
\end{equation}
We denote the random measure $\psi$ as above by $\psi[X]$.

\section{Statement of the main result}
\label{sect: statement of the main result}
Let $W \sub \R^{d}, d \ge 3$ be bounded open and $|\partial W| = 0$. For $\varepsilon > 0$, let $X^{\varepsilon}$ be an MPP on $\R^{d}$ with marks in $\mathcal{C}$ and define the set of ``holes'' by 
\[H^{\varepsilon} := \bigcup_{(z, K_{z}) \in X^{\varepsilon}, \varepsilon z \in W}(\varepsilon z + \varepsilon^{\frac{d}{d - 2}}K_{z}).\]
Intuitively, $H^{\varepsilon}$ is a rescaled germ-grain model observed in a window $W$. 
Throughout this paper we assume that $X^{\varepsilon}$ satisfies the following:
\begin{itemize}
    \item There exist MPPs $Z^{\varepsilon}$ on $\R^{d}$ with marks in
    \begin{equation}
        \label{def: mark space Q}
        \mathcal{Q} := \{(K, \rho) \in \mathcal{C} \times \R_{\ge 0}: K \sub B_{\rho}(0)\}
    \end{equation}
    such that a.s., for every $\varepsilon \in ]0, 1]$, 
    \begin{equation}
        X^{\varepsilon} = \{(z, K): (z, K, \rho) \in Z^{\varepsilon}\}.
    \end{equation}
    This ensures that each hole is contained in a ball and enables us to utilize the results in \cite{giunti2018homogenization}.
    \item The MPP
    \begin{equation}
        N := \{(z, \mathrm{Cap}K, \rho): (z, K, \rho) \in Z^{\varepsilon}\}
    \end{equation}
    is independent of $\varepsilon$.
    \item We also assume that $N$ is stationary, $\E[\#N(A)] < \infty$ whenever $A \sub \R^{d} \times \R_{\ge 0}^{2}$ is bounded Borel, and 
    \begin{equation}
        \label{eq: rho has finite d -2 moment}
        \int_{\R_{\ge 0}^{2}}\rho^{d - 2}\,\psi[N](dk, d\rho) < \infty \text{ a.s.}
    \end{equation}
\end{itemize}
Under these assumptions, we have
\begin{theorem}
    \label{thm: main thm}
    With probability one, the following hold: let $D \sub W$ be nonempty open and a sequence $f^{\varepsilon} \in H^{-1}(D)$ converge to $f \in H^{-1}(D)$ strongly in $H^{-1}(D)$. Put $D^{\varepsilon}  := D \setminus H^{\varepsilon}$. Then the solutions $u^{\varepsilon} \in H^{1}_{0}(D^{\varepsilon})$ to 
    \begin{equation}
        \label{eq: Poisson in perforated domains}
        \begin{cases}
            -\Delta u^{\varepsilon} = f^{\varepsilon} & \text{ in $H^{-1}(D^{\varepsilon})$}\\
            u^{\varepsilon} \in H^{1}_{0}(D^{\varepsilon})
        \end{cases}
    \end{equation}
    weakly converge in $H^{1}_{0}(D)$ to the solution $u \in H^{1}_{0}(D)$ to 
    \begin{equation}
        \label{eq: homogenized equation}
        \begin{cases}
        -\Delta u + C_{0}u = f & \text{in $H^{-1}(D)$}\\
        u \in H^{1}_{0}(D),
    \end{cases}
    \end{equation}
    where $C_{0}$ is a random variable given by 
    \begin{equation}
        \label{eq: def of C0}
        C_{0} := \int_{\R_{\ge 0}}k\,\psi[N](dk, d\rho) = \E\left[ \frac{1}{|A|}\sum_{(z, K) \in X^{1}, z \in A}\mathrm{Cap}K\Big|\mathcal{I}_{N} \right]
    \end{equation}
    for any $A \in \mathcal{B}(\R^{d})$ with $0 < |A| < \infty$. In particular, $C_{0}$ is nonrandom if $N$ is ergodic.
\end{theorem}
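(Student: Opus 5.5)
We follow the energy (oscillating test function) method of Cioranescu--Murat, in the random form of Giunti--H\"ofer--Vel\'azquez. The target is to construct, on a probability-one event independent of $D$ and $f^{\varepsilon}$, functions $\hat e^{\varepsilon}\in H^{1}(\R^{d})$ with the following properties. First, $0\le \hat e^{\varepsilon}\le 1$ and $\hat e^{\varepsilon}=1$ on $H^{\varepsilon}$. Second, $\hat e^{\varepsilon}\to 0$ weakly in $H^{1}(W')$ for bounded $W'$. Third, for every $\phi\in C^{\infty}_{c}(W)$ and every $v^{\varepsilon}\in H^{1}_{0}(D^{\varepsilon})$ converging weakly in $H^{1}_0(D)$ to $v$,
\[
\int \nabla \hat e^{\varepsilon}\cdot\nabla(\phi v^{\varepsilon})\,dx \;\to\; C_0\int \phi v\,dx .
\]
We will also need $\int|\nabla \hat e^{\varepsilon}|^2\phi\to C_0\int\phi$, which is used for the corrector. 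Given these, the theorem follows by the standard argument. Testing \eqref{eq: Poisson in perforated domains} with $u^{\varepsilon}$ gives $\|u^{\varepsilon}\|_{H^1_0(D)}\lesssim \|f^{\varepsilon}\|_{H^{-1}}$, so a subsequence converges weakly to some $u$. Testing with $(1-\hat e^{\varepsilon})\phi$ for $\phi\in C_c^\infty(D)$, which lies in $H^1_0(D^{\varepsilon})$, and passing to the limit using strong convergence of $f^\varepsilon$, weak convergence of $(1-\hat e^{\varepsilon})\phi$, and the third property with $v^\varepsilon=u^\varepsilon$, yields \eqref{eq: homogenized equation}. Uniqueness of the limit then gives convergence of the whole family.

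\textbf{Construction.} First separate good and bad holes. For $(z,K,\rho)\in Z^{\varepsilon}$ with $\varepsilon z\in W$, the hole $\varepsilon z+\varepsilon^{d/(d-2)}K$ lies in a ball of radius $\varepsilon^{d/(d-2)}\rho$. Following \cite{giunti2018homogenization}, we use \eqref{eq: rho has finite d -2 moment} and the strong law of large numbers for stationary MPPs (Lemma \ref{thm: averaging h over A}). These show that, almost surely, the holes with $\rho$ large or with centers at distance less than $\delta\varepsilon$ from a neighbor can be covered by a set $\tilde H^\varepsilon_b$. Its capacitary potential $e^\varepsilon_b$ satisfies $\limsup_\varepsilon\|\nabla e^\varepsilon_b\|_{L^2}^2\to0$ as the truncation parameters are sent to their limits, and the associated sums of capacities are small. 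The remaining good holes are well separated at scale $\varepsilon$ and tiny, since their radii are at most $\varepsilon^{d/(d-2)}M$. For each good hole take the capacitary potential of $\varepsilon^{d/(d-2)}K$ in the ball $U_{\delta\varepsilon/2}$, translated to $\varepsilon z$, and call it $e_z$. Define $\hat e^\varepsilon:=\max(e^\varepsilon_b,\sum_{\text{good}} e_z)$.

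\textbf{Key limit.} By the scaling \eqref{eq: scaling property of capacity} and Lemma \ref{thm: Maz'ya lemma} with $R/r=\delta\varepsilon^{-2/(d-2)}/(2M)\to\infty$, we get
\[
\mathrm{Cap}\bigl(\varepsilon^{d/(d-2)}K,U_{\delta\varepsilon/2}\bigr)=\varepsilon^{d}\,\mathrm{Cap}K\,(1+o(1))
\]
uniformly over good holes, and the reverse inequality is trivial. Thus
\[
\int\nabla e_z\cdot\nabla(\phi v)\,dx=\varepsilon^d\mathrm{Cap}K\cdot\bigl(\text{average of }\phi v\text{ on }\partial U_{\delta\varepsilon/2}(\varepsilon z)\bigr)+\text{error}.
\]
To obtain this, integrate by parts. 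The function $e_z$ is harmonic in the annulus, and we compare it by the maximum principle with the radial profile between the sphere of radius $\varepsilon^{d/(d-2)}\rho$ and the sphere of radius $\delta\varepsilon/2$. This replaces the explicit formulae of \cite{giunti2018homogenization} and controls the normal derivative on the outer sphere up to $o(1)$ relative error. On the hole we have $v=0$. The resulting sum $\sum_{\text{good}}\varepsilon^d\mathrm{Cap}K\,(\phi v)(\varepsilon z)$ is a weighted empirical measure. Lemma \ref{thm: averaging h over A}, applied to $N$ with mark function $k\mathbf 1_{\text{good}}$ on cubes, shows it converges vaguely to $C_0^{\text{good}}\,dx$, where $C_0^{\text{good}}$ increases to $C_0$ as the truncations are removed, by \eqref{eq: def of psi X}. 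We pass from point values to spherical averages of $\phi v^\varepsilon$ via a Poincaré-trace estimate on annuli together with the compact embedding $H^1\hookrightarrow L^2$. The bad part contributes at most $\|\nabla e_b^\varepsilon\|_{L^2}\|v^\varepsilon\|_{H^1}$, which is small. A diagonal argument over countably many truncation parameters and a countable dense set of $\phi$ gives a single null set, independent of $D$ and $f$.

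\textbf{Main obstacle.} We expect the main difficulty to be the control of the bad set and the truncation for general compact holes. This is where \cite{giunti2018homogenization} uses ball geometry, and here only $K\subset B_\rho$ and the moment \eqref{eq: rho has finite d -2 moment} are available. We would first try to replace each bad hole by its enclosing ball $B_{\varepsilon^{d/(d-2)}\rho}$ and then reuse their covering lemma verbatim. This is legitimate because only upper bounds on capacity are needed there, and capacity is monotone.
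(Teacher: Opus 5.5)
Your overall architecture matches the paper's in its main lines: capacitary potentials of the good holes in small cells, a bad set controlled through the enclosing balls and \eqref{eq: rho has finite d -2 moment}, truncation in $\rho$ and in the neighbour distance, the strong law of large numbers for the thinned process, and Maz'ja's lemma giving $\mathrm{Cap}(\varepsilon^{d/(d-2)}K,U_{\delta\varepsilon/2})=\varepsilon^{d}\mathrm{Cap}K\,(1+o(1))$. The gap is the step that turns this energy asymptotics into the identification of the strange term.

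Comparing $e_z$ by the maximum principle with the radial profile $\phi$ of the enclosing ball $B_{\varepsilon^{d/(d-2)}\rho}(\varepsilon z)$ in $U_z=U_{\delta\varepsilon/2}(\varepsilon z)$ gives only $e_z\le\phi$. Hence $0\le-\partial_n e_z\le-\partial_n\phi$ on $\partial U_z$, an upper bound on the flux density of size $\varepsilon^{d}\mathrm{Cap}(B_{\rho}(0))(1+o(1))/\mathcal{H}^{d-1}(\partial U_z)$, not $\varepsilon^{d}\mathrm{Cap}K/\mathcal{H}^{d-1}(\partial U_z)$. No matching lower bound comes out of this comparison. $K$ need not contain a ball, $\mathrm{Cap}K/\rho^{d-2}$ can be arbitrarily small, and $e_z$ on the enclosing sphere can range from values near $0$ to $1$. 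You do know that the total flux of $e_z$ through $\partial U_z$ is exactly $\mathrm{Cap}(\varepsilon z+\varepsilon^{d/(d-2)}K,U_z)$. But your formula $\int\nabla e_z\cdot\nabla(\phi v)=\varepsilon^{d}\mathrm{Cap}K\cdot(\text{spherical average of }\phi v)+\text{error}$ needs the flux density to be uniform on $\partial U_z$ up to $o(1)$ relative error, and the comparison does not give that. This is exactly where \cite{giunti2018homogenization} uses ball geometry, so your ``main obstacle'' is misplaced. With your truncation-dependent bad set, subadditivity alone bounds the capacity of the bad holes by $\varepsilon^{d}\sum_{\text{bad}}k_z$. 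Its $\limsup_{\varepsilon}$ is at most $|W|\bigl(\int k\chi_{\{\rho>M\}}\,d\psi[N]+\int k\,d\psi_{\delta}[N]\bigr)$, which vanishes as $M\to\infty$, $\delta\to0$ via $k\lesssim\rho^{d-2}$ and Lemma \ref{thm: continuity of thinning}. The real difficulty is the flux of a non-spherical hole.

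You can close the gap in either of two ways.

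\emph{Green representation.} Write $e_z=\int G_{U_z}(\cdot,y)\,d\mu_z(y)$, with $\mu_z$ the equilibrium measure of the hole relative to $U_z=U_R(\varepsilon z)$, of total mass $\mathrm{Cap}(\varepsilon z+\varepsilon^{d/(d-2)}K,U_z)$. On $\partial U_z$ one then has $-\partial_n e_z(x)=\int P_R(x,y)\,d\mu_z(y)$, where $P_R(x,y)=\frac{R^{d-2}(R^{2}-|y-\varepsilon z|^{2})}{\mathcal{H}^{d-1}(\partial U_z)\,|x-y|^{d}}$. This equals $\mathcal{H}^{d-1}(\partial U_z)^{-1}(1+O(r/R))$ for $|y-\varepsilon z|\le r=\varepsilon^{d/(d-2)}M$, which is the uniform statement you need.

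\emph{The paper's route.} The paper never looks at normal derivatives pointwise. The maximum principle is used only for the domination $0\le\hat\mu^{\varepsilon}_{M}\le\mu^{\varepsilon}_{M}$ by the ball measures, which are in turn dominated by a sequence converging strongly in $H^{-1}$ (Lemma \ref{thm: giunti 5_3}). By \cite[Lemma 2.8]{cioranescu1997strange} this makes $\hat\mu^{\varepsilon}_{M}$ strongly precompact in $H^{-1}$. Every limit point is then identified through the weak convergence of $|\nabla\hat e^{\varepsilon}_{g,M}|^{2}\,dx$ (Lemma \ref{thm: weak convergence of the modulus of grad egM}). That step needs only the two-sided bounds $\mathrm{Cap}F\le\mathrm{Cap}(F,U_R)\le(1+f_d(R/r))\mathrm{Cap}F$, which you already have.

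Finally, check your sign. Since $e_z$ decreases to $0$ across $\partial U_z$, one has $\int\nabla\hat e^{\varepsilon}\cdot\nabla(\phi v^{\varepsilon})\to-C_{0}\int\phi v$. With the sign you wrote, testing with $(1-\hat e^{\varepsilon})\phi$ would produce $-\Delta u-C_{0}u=f$.
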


\begin{corollary}
    \label{thm: convergence of heat eq}
    Let $r^{\varepsilon}: L^{2}(D) \to L^{2}(D^{\varepsilon})$ denote the restriction operator and $i^{\varepsilon}: L^{2}(D^{\varepsilon}) \to L^{2}(D)$ the extension by zero. Further, let $\Delta_{D^{\varepsilon}}$ and $\Delta_{D}$ denote the Dirichlet Laplacians on $D^{\varepsilon}$ and $D$, respectively. Then for every $T_{0} > 0$, we have 
    \begin{equation}
        \sup_{T_{0} \le t < \infty}\n{i^{\varepsilon}e^{t\Delta_{D^{\varepsilon}}}r^{\varepsilon} - e^{-C_{0}t}e^{t \Delta_{D}}}_{L^{2}(D) \to L^{2}(D)} \to 0
    \end{equation}
    a.s. and in $L^{p}(\pr)$ for every $1 \le p < \infty$.
\end{corollary}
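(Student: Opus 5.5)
The plan is to derive Corollary \ref{thm: convergence of heat eq} from Theorem \ref{thm: main thm} by first upgrading its weak convergence to norm resolvent convergence, and then passing to the semigroups. Put $A^{\varepsilon} := -\Delta_{D^{\varepsilon}}$ (Dirichlet Laplacian on $D^{\varepsilon}$) and $A := -\Delta_{D} + C_{0}$ on $D$. Note that $e^{-tA} = e^{-C_{0}t}e^{t\Delta_{D}}$, and that $i^{\varepsilon}(A^{\varepsilon})^{-1}r^{\varepsilon}$ is the solution map $f \mapsto u^{\varepsilon}$ of \eqref{eq: Poisson in perforated domains} with $f^{\varepsilon}=f$.

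\textbf{Step 1: norm resolvent convergence.} Fix $\omega$ in the full-measure event of Theorem \ref{thm: main thm}. I claim
\[
\n{i^{\varepsilon}(A^{\varepsilon})^{-1}r^{\varepsilon} - A^{-1}}_{L^{2}(D)\to L^{2}(D)} \to 0 .
\]
Suppose not. Then there are $\delta>0$, $\varepsilon_n\to0$ and $f_n\in L^{2}(D)$ with $\n{f_n}_{L^2}\le 1$ such that the difference applied to $f_n$ has $L^{2}$ norm at least $\delta$. Since $L^{2}(D)$ embeds compactly in $H^{-1}(D)$, after passing to a subsequence $f_n\to f$ strongly in $H^{-1}(D)$. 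Theorem \ref{thm: main thm}, which explicitly allows varying right-hand sides $f^{\varepsilon}\to f$ in $H^{-1}$, gives $u^{\varepsilon_n}\rightharpoonup A^{-1}f$ weakly in $H^{1}_{0}(D)$. Rellich's theorem upgrades this to strong convergence in $L^{2}(D)$. Also $A^{-1}f_n\to A^{-1}f$ in $L^{2}$, because $A^{-1}:H^{-1}\to H^{1}_{0}$ is bounded; this uses $C_0<\infty$ a.s., which follows from \eqref{eq: def of C0} and the finite first moment assumption. These two facts contradict the choice of $f_n$. To make this argument legitimate we need the whole sequence $\varepsilon\to0$ with a single null set. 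This is built into the statement of Theorem \ref{thm: main thm}, since its null set does not depend on $D$ or on $f^{\varepsilon}$.

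\textbf{Step 2: from resolvents to semigroups, uniformly in $t\ge T_0$.} All operators are nonnegative self-adjoint. By Poincaré, $\lambda_1(D^{\varepsilon})\ge\lambda_1(D)=:\lambda>0$ and $\inf\sigma(A)\ge\lambda$. Take $g(s)=e^{-t/s}$ on $s\in[0,1/\lambda]$, with $g(0)=0$. Then $e^{-tA^{\varepsilon}}=g((A^{\varepsilon})^{-1})$. Because of the zero extension, $i^{\varepsilon}g((A^{\varepsilon})^{-1})r^{\varepsilon}=g(R_\varepsilon)$, where $R_\varepsilon:=i^{\varepsilon}(A^{\varepsilon})^{-1}r^{\varepsilon}$ is self-adjoint on $L^{2}(D)$ and vanishes on $L^2(D\setminus D^{\varepsilon})$; here $g(0)=0$ is used. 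The family $\{g_t: t\ge T_0\}$ is uniformly equicontinuous on $[0,1/\lambda]$, since $|g_t'(s)|=(t/s^2)e^{-t/s}$ is bounded uniformly in $t\ge T_0$. Hence for $\eta>0$ Weierstrass provides a single polynomial $p$ with $p(0)=0$ and $\sup_t\n{g_t-p}_\infty<\eta$. The argument needs this for all $t\ge T_0$ at once, so equicontinuity alone is not enough: it has to be combined with the observation that $g_t\to$ uniformly small as $t\to\infty$. Concretely, $\sup_{s\le1/\lambda}e^{-t/s}=e^{-t\lambda}$, so for $t\ge T_1$ both semigroups have norm at most $e^{-\lambda T_1}<\eta$. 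On the compact range $[T_0,T_1]$, equicontinuity in $t$ lets us use finitely many polynomials. Functional calculus gives $\n{p(R_\varepsilon)-p(A^{-1})}\to0$ from Step 1, which yields the a.s.\ claim. I expect this uniformity in $t$ to be the main obstacle, and the splitting into $t\ge T_1$ and $t\in[T_0,T_1]$ is how I plan to handle it.

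\textbf{Step 3: $L^{p}(\pr)$ convergence.} The quantity inside the supremum is bounded by $2$ deterministically, since all semigroups are contractions. Measurability of the supremum follows by restricting to rational $t$, using strong continuity. Dominated convergence then turns a.s.\ convergence into convergence in $L^{p}(\pr)$ for every $1\le p<\infty$.
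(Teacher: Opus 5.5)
Your proof is correct, and its second step takes a genuinely different route from the paper.

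\textbf{What is the same.} Your Step 1 is a direct proof of what the paper obtains by citing Daners' Proposition 4.1.1. The uniformity in $t$ is also the same idea in both. The paper uses total boundedness of $\{e^{-tx}\}_{t\ge T_0}$ in $C_0([\lambda_1(D),\infty[)$; you use the variable $s=1/x$ and a finite net.

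\textbf{Where the routes differ.} The paper invokes Post's abstract theorem for operators on different Hilbert spaces to get $\n{e^{t\Delta_{D^{\varepsilon}}}r^{\varepsilon}-r^{\varepsilon}e^{-C_{0}t}e^{t\Delta_{D}}}_{L^{2}(D)\to L^{2}(D^{\varepsilon})}\to 0$. It must then separately kill the defect $(1-i^{\varepsilon}r^{\varepsilon})e^{-C_{0}t}e^{t\Delta_{D}}$. For that it uses the $L^{2}\to L^{\infty}$ bound $t^{-d/4}$ for the heat semigroup and $|D\setminus D^{\varepsilon}|\to 0$. The latter in turn needs Lemma \ref{thm: averaging h over A} and the control of the maximal radius from Lemma \ref{thm: dividing balls into good and bad parts}.

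You instead observe that $R_{\varepsilon}=i^{\varepsilon}(A^{\varepsilon})^{-1}r^{\varepsilon}$ is self-adjoint on $L^{2}(D)$ and vanishes on $L^{2}(D\cap H^{\varepsilon})$. Together with $g_t(0)=0$, this gives $i^{\varepsilon}e^{t\Delta_{D^{\varepsilon}}}r^{\varepsilon}=g_t(R_{\varepsilon})$ exactly. Everything then lives in one Hilbert space and the defect term never appears.

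In effect you show that the corollary follows from norm resolvent convergence alone, with no information on the volume of the holes. Indeed, since $(1-i^{\varepsilon}r^{\varepsilon})R_{\varepsilon}=0$, one has $\n{(1-i^{\varepsilon}r^{\varepsilon})A^{-1}}\le\n{A^{-1}-R_{\varepsilon}}\to 0$. The paper's route, in exchange, stays within the general varying-Hilbert-space framework and reuses off-the-shelf results.

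\textbf{Small corrections.} Your sentence about a single polynomial $p$ with $\sup_t\n{g_t-p}_\infty<\eta$ is false as stated, though you retract it. Your remark that equicontinuity alone is not enough is also not right. Uniform equicontinuity plus uniform boundedness on the compact interval $[0,1/\lambda]$ already give total boundedness by Arzel\`a--Ascoli, which is precisely what the paper uses. Your split into $t\ge T_1$ and $[T_0,T_1]$ is just a concrete way to produce the net.

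Finally, $C_0<\infty$ a.s.\ does not come from the first-moment assumption on counts. It follows from \eqref{eq: rho has finite d -2 moment} together with $k=\mathrm{Cap}K\le\mathrm{Cap}B_{\rho}(0)\lesssim\rho^{d-2}$.
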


\begin{proof}
    By Theorem \ref{thm: main thm} and \cite[Proposition 4.1.1]{daners2008domain}, $i^{\varepsilon}(- \Delta_{D^{\varepsilon}})^{-1}r^{\varepsilon} \xrightarrow{B(L^{2}(D), L^{2}(D))} (-\Delta_{D} + C_{0})^{-1}$ a.s. Hence, by a standard approximation argument (see, e.g., \cite[Theorem 4.2.9]{post2012spectral}), we have that
    \[\n{e^{t \Delta_{D^{\varepsilon}}}r^{\varepsilon} - r^{\varepsilon}e^{-C_{0}t}e^{t \Delta_{D}}}_{L^{2}(D) \to L^{2}(D^{\varepsilon})} \to 0 \text{ a.s.}\]
    for each $t \in ]0, \infty[$. Further, 
    \begin{equation*}
        \begin{split}
            \n{(1 - i^{\varepsilon}r^{\varepsilon})e^{-C_{0}t}e^{t\Delta_{D}}}_{L^{2}(D) \to L^{2}(D)} &\le \n{1 - i^{\varepsilon}r^{\varepsilon}}_{L^{\infty}(D) \to L^{2}(D)}\n{e^{-C_{0}t}e^{t\Delta_{D}}}_{L^{2}(D) \to L^{\infty}(D)}\\
            &\lesssim |D \setminus D^{\varepsilon}|^{\frac{1}{2}}e^{-C_{0}t}t^{-\frac{d}{4}}
        \end{split}
    \end{equation*}
    and
    \[|D \setminus D^{\varepsilon}| \lesssim \sum_{(z, k, \rho) \in N, \varepsilon z \in W}(\varepsilon^{\frac{d}{d - 2}}\rho)^{d} \le \left( \max_{(z, k, \rho) \in N, \varepsilon z \in W} \varepsilon^{\frac{d}{d - 2}}\rho\right)^{2}\varepsilon^{d}\sum_{(z, k, \rho) \in N, \varepsilon z \in W}\rho^{d - 2}.\]
    By Lemma \ref{thm: averaging h over A} and Lemma \ref{thm: dividing balls into good and bad parts} below, we see that the right-hand side of the last display converges to $0$ a.s. Therefore, 
    \[\n{i^{\varepsilon}e^{t\Delta_{D^{\varepsilon}}}r^{\varepsilon} - e^{-C_{0}t}e^{t\Delta_{D}}}_{L^{2}(D) \to L^{2}(D)} \to 0 \text{ a.s.}\]
    for each $t \in ]0, \infty[$. Note also that 
    \begin{equation*}
        \begin{split}
            &\left| \n{i^{\varepsilon}e^{t\Delta_{D^{\varepsilon}}}r^{\varepsilon} - e^{-C_{0}t}e^{t \Delta_{D}}}_{L^{2}(D) \to L^{2}(D)} - \n{i^{\varepsilon}e^{s\Delta_{D^{\varepsilon}}}r^{\varepsilon} - e^{-C_{0}s}e^{s \Delta_{D}}}_{L^{2}(D) \to L^{2}(D)} \right|\\
            & \le \n{i^{\varepsilon}(e^{t \Delta_{D^{\varepsilon}}} - e^{s\Delta_{D^{\varepsilon}}})r^{\varepsilon}}_{L^{2}(D) \to L^{2}(D)} + \n{e^{-t(\Delta_{D} - C_{0})} - e^{-s(\Delta_{D} - C_{0})}}_{L^{2}(D) \to L^{2}(D)}\\
            &\le 2\sup_{\lambda_{1}(D) \le x < \infty}|e^{-tx} - e^{-sx}|
        \end{split}
    \end{equation*}
    for every $s, t > 0$, where $\lambda_{1}(D)$ denotes the first eigenvalue of $-\Delta_{D}$. Since the family $(e^{-tx})_{T_{0} \le t < \infty} \sub C_{0}([\lambda_{1}(D), \infty[)$ is totally bounded for every $T_{0} > 0$, the last two displays show the desired a.s. convergence. The convergence in $L^{p}(\pr)$ follows from the a.s. convergence by dominated convergence.
\end{proof}

\begin{example}
    \label{thm: c alpha}
    For a nonempty $K \in \mathcal{C}$, let $c(K)$ denote the circumcenter of $K$, that is, the center of the smallest ball containing $K$. For $\alpha \ge 0$, Put
    \[\mathcal{C}_{\alpha} := \left\{ K \in \mathcal{C}: K \neq \varnothing, c(K) = 0, \frac{\mathrm{Cap}K}{(\mathrm{diam}K)^{d - 2}} \ge \alpha \right\}.\]
    Let $X^{\varepsilon}$ be MPPs with marks in $\mathcal{C}_{\alpha}, \alpha > 0$ nonrandom such that the MPP
    \[\nu := \{(z, \mathrm{Cap}K): (z, K) \in X^{\varepsilon}\}\]
    is independent of $\varepsilon$. Suppose further that $\nu$ is stationary, $\E[\#(\nu \cap A)] < \infty$ whenever $A \sub \mathcal{B}(\R^{d} \times \R_{\ge 0})$ is bounded, and $\int_{\R_{\ge 0}}k\,\psi[\nu](dk) < \infty$ a.s. Then $X^{\varepsilon}$ satisfies assumptions listed above, and thus a.s., the solution to \eqref{eq: Poisson in perforated domains} converges weakly in $H^{1}_{0}(D)$ to the solution to \eqref{eq: homogenized equation} with
    \[C_{0} = \int_{\R_{\ge 0}}k\,\psi[\nu](dk).\]
    Moreover, if $\nu$ is ergodic, then $C_{0}$ is nonrandom, and
    \[C_{0} = \E\left[ \frac{1}{|A|}\sum_{(z, K) \in X^{1}, z \in A}\mathrm{Cap}K \right]\]
    for any $A \in \mathcal{B}(\R^{d})$ with $0 < |A| < \infty$.

    Indeed, put $Z^{\varepsilon} := \{(z, K, \left( \frac{\mathrm{Cap}K}{\alpha} \right)^{\frac{1}{d - 2}}): (z, K) \in X^{\varepsilon}\}$. Then 
    \[N = \left\{\left(z, k, \left( \frac{k}{\alpha} \right)^{\frac{1}{d - 2}}\right): (z, k) \in \nu\right\}\]
    and it is stationary. Also, it is easily seen that $\mathcal{I}_{N} = \mathcal{I}_{\nu}$.
\end{example}

\begin{remark}
    Let $V: W \to \R_{\ge 0}$ be bounded continuous and put 
    \[H^{\varepsilon}_{V} := \bigcup_{(z, K_{z}) \in X^{\varepsilon}, \varepsilon z \in W}(\varepsilon z + \varepsilon^{\frac{d}{d - 2}}V(\varepsilon z)K_{z}), \, D^{\varepsilon}_{V} := D \setminus H^{\varepsilon}_{V}.\]
    Then with probability one, the solutions $u^{\varepsilon}_{V} \in H^{1}_{0}(D^{\varepsilon}_{V})$ to 
    \begin{equation*}
        \begin{cases}
            -\Delta u^{\varepsilon}_{V} = f^{\varepsilon} & \text{ in $D^{\varepsilon}_{V}$}\\
            u^{\varepsilon}_{V} = 0 & \text{ on $\partial D^{\varepsilon}_{V}$}
        \end{cases}
    \end{equation*}
    converge weakly in $H^{1}_{0}(D)$ to the solution to
    \begin{equation*}
        \begin{cases}
            -\Delta u_{V} + C_{0}V^{d - 2}u = f & \text{ in $D$}\\
            u_{V} = 0 & \text{ on $\partial D$}.
        \end{cases}
    \end{equation*}
    To see this, observe first that we may assume that $V \le 1$ on $W$. Indeed, replace $X^{\varepsilon}$ with $\{(z, \n{V}_{\infty}K): (z, K) \in X^{\varepsilon}\}$. Then for $z \in I^{\varepsilon}_{g}$ and $M >1$, let $\hat{e}^{\varepsilon}_{z, V}$ be the capacitary potential of $\varepsilon z + V(\varepsilon z)\varepsilon^{\frac{d}{d - 2}}K^{\varepsilon}_{z}$ in $U^{\varepsilon}_{z}$ and $\hat{e}^{\varepsilon}_{z, M, V}$ the capacitary potential of $\varepsilon z + \frac{M}{\rho_{z} \vee M}\varepsilon^{\frac{d}{d - 2}}V(\varepsilon z)K^{\varepsilon}_{z}$ in $U^{\varepsilon}_{z}$, where $I^{\varepsilon}_{g}, U^{\varepsilon}_{z}$ are as in Section \ref{sect: lemma from giunti}. Also, define $\hat{e}^{\varepsilon}_{g, V} := \sum_{z \in I^{\varepsilon}_{g}}\hat{e}^{\varepsilon}_{z, V}$, $\hat{e}^{\varepsilon}_{V} := \hat{e}^{\varepsilon}_{g, V} + \hat{e}^{\varepsilon}_{b}$, and $\hat{e}^{\varepsilon}_{g, M, V} := \sum_{z \in I^{\varepsilon}_{g, M}}\hat{e}^{\varepsilon}_{z, M, V}$. Then Lemma \ref{thm: weak convergence of the modulus of grad egM} with $\hat{e}^{\varepsilon}_{g, M}$ and $\chi_{W}$ replaced with $\hat{e}^{\varepsilon}_{g, M, V}$ and $V^{d - 2}$, respectively holds true. Consequently, by the same arguments as in Section \ref{sect: proof of construction of osc fcts}, we may show that $\hat{e}^{\varepsilon}$ satisfy \eqref{eq: ee1}, \eqref{eq: H3 for e}, and \eqref{eq: H5 for e} with $\int_{W}v \, dx$ replaced with $\int_{W}v V^{d - 2} \, dx$.
\end{remark}

\section{Proof of Theorem \ref{thm: main thm}}
\label{sect: proof of the main thm}
To prove Theorem \ref{thm: main thm}, we construct suitable oscillating test functions, just as \cite{cioranescu1997strange,giunti2018homogenization} do.

\begin{lemma}
    \label{thm: construction of osc fct}
    There is a family $(\hat{e}^{\varepsilon})_{\varepsilon \in ]0, 1]}$ of random functions such that
    \begin{equation}
        \label{eq: ee1}
        \hat{e}^{\varepsilon} \in K_{H^{\varepsilon}, \{d(\cdot, W) < \varepsilon\}},
    \end{equation}
    \begin{equation}
        \label{eq: H3 for e}
        \hat{e}^{\varepsilon} \xrightharpoonup{H^{1}(\R^{d})} 0 \text{ weakly a.s. as $\varepsilon \searrow 0$},
    \end{equation}
    and a.s., for every family $(v^{\varepsilon})_{\varepsilon \in ]0, 1]}$ such that $v^{\varepsilon} \rightharpoonup v$ weakly in $H^{1}_{0}(W)$ and $v^{\varepsilon} \in H^{1}_{0}(W \setminus H^{\varepsilon})$, it holds that 
    \begin{equation}
        \label{eq: H5 for e}
        \inn{\Delta \hat{e}^{\varepsilon}}{v^{\varepsilon}}_{H^{-1}(W), H^{1}_{0}(W)} \to \int_{W}v \,dx \int_{\R_{\ge 0}^{2}}k \,\psi[N](dk, d\rho)
    \end{equation}
    as $\varepsilon \searrow 0$. 
\end{lemma}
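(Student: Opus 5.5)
We follow the Giunti--H\"ofer--Vel\'azquez construction \cite{giunti2018homogenization}. The centers are split into \emph{good} and \emph{bad} ones, and the explicit ball potentials used there are replaced by capacitary potentials of the actual holes in small balls, which are controlled through Lemma \ref{thm: Maz'ya lemma} and the maximum principle.

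\textbf{Step 1: Good and bad centers.} For $(z,K,\rho)\in Z^{\varepsilon}$ with $\varepsilon z\in W$, set $r_{z}:=\varepsilon^{\frac{d}{d-2}}\rho$ and let $\delta_{z}$ be half the distance from $\varepsilon z$ to its nearest neighbouring center, truncated at $\varepsilon$. Call $z$ \emph{good} if $r_{z}\le \varepsilon^{\frac{d}{d-2}}M$ for a large truncation $M$ and $\delta_{z}\ge \varepsilon^{1+\theta}$ for some small $\theta>0$; call it \emph{bad} otherwise.

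The bad set $H^{\varepsilon}_{b}$ is handled by the covering/merging lemma of \cite{giunti2018homogenization}. It provides a set $\tilde H^{\varepsilon}_{b}\supseteq H^{\varepsilon}_{b}$, a union of disjoint balls of radii comparable to the $r_{z}$, with
\[
\mathrm{Cap}(\tilde H^{\varepsilon}_{b}) \lesssim \varepsilon^{d}\sum_{z\ \mathrm{bad}}\rho_{z}^{d-2}.
\]
By the ergodic averages \eqref{eq: def of psi X} (through the strong law of Lemma \ref{thm: averaging h over A}) and assumption \eqref{eq: rho has finite d -2 moment}, this is a.s. $\le o(1)+C\int_{\rho>M}\rho^{d-2}\,d\psi[N]$. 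The last term tends to $0$ as $M\to\infty$. Let $\hat e^{\varepsilon}_{b}$ be the capacitary potential of $\tilde H^{\varepsilon}_{b}$ in the $\varepsilon$-neighbourhood of $W$.

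For a good $z$, let $\hat e^{\varepsilon}_{z}$ be the capacitary potential of $\varepsilon z+\varepsilon^{\frac{d}{d-2}}K_{z}$ in $U^{\varepsilon}_{z}:=U_{\delta_{z}}(\varepsilon z)$. These balls are pairwise disjoint, so $\hat e^{\varepsilon}_{g}:=\sum_{z\ \mathrm{good}}\hat e^{\varepsilon}_{z}$ is well defined. Set $\hat e^{\varepsilon}:=\max(\hat e^{\varepsilon}_{g},\hat e^{\varepsilon}_{b})$, or the sum of the two after cutting off the good balls that meet $\tilde H^{\varepsilon}_{b}$. Then $\hat e^{\varepsilon}$ lies in $K_{H^{\varepsilon},\{d(\cdot,W)<\varepsilon\}}$, which is \eqref{eq: ee1}.

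\textbf{Step 2: Energy bounds and weak convergence.} By the scaling property \eqref{eq: scaling property of capacity} and Lemma \ref{thm: Maz'ya lemma} with $R/r=\delta_{z}/r_{z}\ge \varepsilon^{1+\theta-\frac{d}{d-2}}/M\to\infty$,
\[
\|\nabla \hat e^{\varepsilon}_{z}\|_{2}^{2}=(1+o(1))\,\varepsilon^{d}\,\mathrm{Cap}K_{z},
\]
uniformly over good $z$. Summing and applying the strong law gives $\|\nabla \hat e^{\varepsilon}_{g}\|_{2}^{2}\to |W|\,C_{0}$-type bounds, so $\hat e^{\varepsilon}$ is bounded in $H^{1}$. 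Its support has measure at most $\sum_{z}\delta_{z}^{d}\lesssim \varepsilon^{d}\#\{z:\varepsilon z\in W\}$ times small factors, and together with $0\le\hat e^{\varepsilon}\le1$ this gives $\hat e^{\varepsilon}\to0$ in $L^{2}$. Hence \eqref{eq: H3 for e} holds.

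\textbf{Step 3: The key limit \eqref{eq: H5 for e}.} Since $v^{\varepsilon}=0$ on the holes, only $\Delta\hat e^{\varepsilon}_{z}$ on $\partial U^{\varepsilon}_{z}$ contributes (and the bad part is negligible by its vanishing capacity and the uniform bound on $v^{\varepsilon}$). In the ball picture $\Delta\hat e^{\varepsilon}_{z}$ is a uniform measure on the sphere $\partial U_{\delta_z}$. Without explicit formulas we instead compare with the potential of the ball $B_{r_z}$, using a two-annulus argument. On $U_{\delta_z}\setminus U_{\delta_z/2}$ the function $\hat e^{\varepsilon}_{z}$ is harmonic and, by the maximum principle and Harnack, close to $\mathrm{Cap}(\cdot)\,|x-\varepsilon z|^{2-d}$ minus a constant. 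The normal derivative $\partial_{\nu}\hat e^{\varepsilon}_{z}$ on $\partial U_{\delta_z}$ therefore has total mass $\mathrm{Cap}(F_{z},U_{\delta_z})=(1+o(1))\varepsilon^{d}\mathrm{Cap}K_{z}$, with density uniform up to a factor $1+o(1)$. This follows from the multipole expansion of harmonic functions in the exterior of $B_{r_z}$, since $r_{z}/\delta_{z}\to0$.

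Thus
\[
\langle\Delta\hat e^{\varepsilon},v^{\varepsilon}\rangle\approx\sum_{z\ \mathrm{good}}\varepsilon^{d}\,\mathrm{Cap}K_{z}\fint_{\partial U_{\delta_z}}v^{\varepsilon}.
\]
To pass to the limit we replace $\delta_{z}$ by the common radius $\varepsilon^{1+\theta}$. The difference is controlled by $\nabla v^{\varepsilon}$ in $L^{2}$ via the trace/Poincaré estimates of \cite{giunti2018homogenization}, giving a bound of order $\varepsilon^{\theta'}\|v^{\varepsilon}\|_{H^{1}}$. We then conclude with the strong law (Lemma \ref{thm: averaging h over A}) applied to the weighted empirical measure $\varepsilon^{d}\sum_{z}\mathrm{Cap}K_{z}\,\delta_{\varepsilon z}$, which converges vaguely on $W$ to $C_{0}\,dx$, combined with the strong $L^{2}$ compactness of the sphere averages of $v^{\varepsilon}$. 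Finally we let $M\to\infty$.

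\textbf{Main obstacle.} The delicate point is Step 3 for non-spherical holes. One must show that the flux of $\hat e^{\varepsilon}_{z}$ through the outer sphere is asymptotically uniform with total mass $\varepsilon^{d}\mathrm{Cap}K_{z}$. I would first get the total mass from Lemma \ref{thm: Maz'ya lemma}, and then uniformity from Harnack on the annulus, exploiting $r_z\ll\delta_z$.
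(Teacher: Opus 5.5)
\textbf{Main gap: your $\hat e^{\varepsilon}$ depends on $M$.} A centre is good only if $\rho_z\le M$. For fixed $M$ the bad set therefore carries capacity density of order $\int_{\rho>M}\rho^{d-2}\,\psi[N](dk,d\rho)$, which does not vanish as $\varepsilon\searrow0$. The lemma asks for a single family with exact limits in \eqref{eq: H3 for e} and \eqref{eq: H5 for e}. So ``finally let $M\to\infty$'' is not available unless you add a diagonal choice $M=M(\varepsilon,\omega)$ with error bounds uniform over bounded families $(v^{\varepsilon})$, which you do not have.

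In fact \eqref{eq: H3 for e} fails for your family at fixed $M$ whenever $\psi[N]$ charges $\{\rho>M\}$. You take $\hat e^{\varepsilon}_{b}$ to be the capacitary potential of $\tilde H^{\varepsilon}_{b}$ in the whole $\varepsilon$-neighbourhood of $W$. Those holes have positive capacity density, so they produce exactly the strange-term effect: $1-\hat e^{\varepsilon}_{b}$ behaves like the solution of $-\Delta w+c_{M}w=0$ in $W$, $w=1$ on $\partial W$, with $c_{M}>0$. Hence $\hat e^{\varepsilon}_{b}$ does not converge weakly to $0$.

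Your support argument in Step 2 is also wrong. $\hat e^{\varepsilon}_{b}$ is not localized, and even $\bigcup_{z}U_{\delta_z}(\varepsilon z)$ has measure of order one in general: for a randomly shifted lattice every $\delta_z=\varepsilon/2$. For the good part the right tool is Poincar\'e in each ball, $\n{\hat e^{\varepsilon}_{g}}_{L^{2}}^{2}\lesssim\varepsilon^{2}\n{\nabla\hat e^{\varepsilon}_{g}}_{L^{2}}^{2}$.

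The paper avoids all this by separating the construction from the truncation.
\begin{itemize}
\item The bad set of Lemma \ref{thm: dividing balls into good and bad parts} uses $\varepsilon$-dependent thresholds: radii with $2\varepsilon^{\frac{d}{d-2}}\rho_z\ge\varepsilon r_\varepsilon$, centres in $\mathcal{T}_{2r_\varepsilon}(\Phi)$, and centres near the doubled large holes. This gives $\varepsilon^{d}\#I^{\varepsilon}_{b}\to0$.
\item The bad potential is taken in the doubled balls $D^{\varepsilon}_{b}$, so $\capa{\hat H^{\varepsilon}_{b}}{D^{\varepsilon}_{b}}\to0$.
\item All other holes, including those with $\rho_z>M$, stay good, with untruncated potentials in $U_{d^{\varepsilon}_{z}}(\varepsilon z)$. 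Here $d^{\varepsilon}_{z}$ also keeps these balls away from $D^{\varepsilon}_{b}$.
\item $M$ enters only through the auxiliary $\hat e^{\varepsilon}_{g,M}$, via $\inn{\Delta\hat e^{\varepsilon}_{g}}{v^{\varepsilon}}=\inn{\hat\mu^{\varepsilon}_{M}}{v^{\varepsilon}}-\inn{\nabla\hat e^{\varepsilon}_{g}-\nabla\hat e^{\varepsilon}_{g,M}}{\nabla v^{\varepsilon}}$ together with $\limsup_{\varepsilon}\n{\nabla(\hat e^{\varepsilon}_{g}-\hat e^{\varepsilon}_{g,M})}_{L^{2}(W)}\to0$ as $M\to\infty$.
\end{itemize}
Since the left-hand side does not depend on $M$, this yields the exact limit.

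\textbf{Step 3: the limit passage is not justified.} Your good centres are only $\varepsilon^{1+\theta}$-separated. After moving the fluxes to the common radius $s=\varepsilon^{1+\theta}$, the measures $\sum_{z}\varepsilon^{d}k_{z}\,\delta_{\partial B_{s}(\varepsilon z)}/\mathcal{H}^{d-1}(\partial B_{s})$ have ball-averaged densities of size $\varepsilon^{-\theta d}$. Pairing them with a merely weakly convergent $v^{\varepsilon}$ needs strong $H^{-1}$ convergence. Neither vague convergence of $\varepsilon^{d}\sum_{z}k_{z}\delta_{\varepsilon z}$ nor $L^{2}$ convergence of sphere averages provides that. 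The paper restricts the auxiliary sum to $I^{\varepsilon}_{g,M}=\{d^{\varepsilon}_{z}\ge\varepsilon/M\}$, where the balls are disjoint with radii comparable to $\varepsilon$. There Lemma \ref{thm: giunti 5_3} gives an $L^{\infty}$ bound and strong $H^{-1}$ convergence. The discarded centres are controlled by Lemma \ref{thm: continuity of thinning} and \eqref{eq: points in thinned processs near Deb vanish asymptotically}.

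\textbf{What you flag as the main obstacle is actually easy.} Write $\hat e^{\varepsilon}_{z}$ as the Green potential of its equilibrium measure, which is supported in $B_{r_z}(\varepsilon z)$. The flux density on $\partial U_{\delta_z}(\varepsilon z)$ is then the Poisson kernel of that ball integrated against this measure. That equals $(1+O(r_z/\delta_z))$ times the relative capacity divided by $\mathcal{H}^{d-1}(\partial U_{\delta_z})$. Applied to the truncated potentials on $I^{\varepsilon}_{g,M}$ and combined with Lemma \ref{thm: Maz'ya lemma}, this identifies $\lim_{\varepsilon}\hat\mu^{\varepsilon}_{M}$ directly through Lemma \ref{thm: giunti 5_3}. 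It is a legitimate, more hands-on alternative to the paper's soft route. The paper instead uses the domination $0\le\hat\mu^{\varepsilon}_{M}\le\mu^{\varepsilon}_{M}$ by the ball case plus the Cioranescu--Murat compactness lemma. It then identifies the limit through the weak convergence of $|\nabla\hat e^{\varepsilon}_{g,M}|^{2}$ (Lemma \ref{thm: weak convergence of the modulus of grad egM}).
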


\begin{proof}[Proof of Theorem \ref{thm: main thm}]
    Fix $\omega \in \Omega$ such that convergences \eqref{eq: H3 for e} and \eqref{eq: H5 for e} hold. Since $\n{\nabla u^{\varepsilon}}_{L^{2}(D)} \le C(d, D)\n{f^{\varepsilon}}_{H^{-1}(D)}$ and $f^{\varepsilon} \to f$ in $H^{-1}(D)$, there is a subsequence, still denoted by $\varepsilon$, and $u \in H^{1}_{0}(D)$ such that $u^{\varepsilon} \rightharpoonup u$ weakly in $H^{1}_{0}(D)$. Thus the only task left is to identify $u$.

    Let $\varphi \in C^{\infty}_{c}(D)$ and put $w^{\varepsilon} := 1 - \hat{e}^{\varepsilon}$. Then $\varphi w^{\varepsilon} \in H^{1}_{0}(W \setminus H^{\varepsilon})$. Hence, 
    \[\int_{D^{\varepsilon}}\nabla u^{\varepsilon} \cdot \nabla (\varphi w^{\varepsilon}) = \inn{f^{\varepsilon}}{\varphi w^{\varepsilon}}_{H^{-1}(D), H^{1}_{0}(D)},\]
    or equivalently, 
    \begin{equation}
        \label{eq: tested by vf we}
        \int_{D}w^{\varepsilon}\nabla u^{\varepsilon} \cdot \nabla \varphi + \int_{D} \nabla (\varphi u^{\varepsilon}) \cdot \nabla w^{\varepsilon} - \int_{D}u^{\varepsilon}\nabla \varphi \cdot \nabla w^{\varepsilon} = \inn{f^{\varepsilon}}{\varphi w^{\varepsilon}}_{H^{-1}(D), H^{1}_{0}(D)}.
    \end{equation}
    Thanks to \eqref{eq: H3 for e}, \eqref{eq: H5 for e}, the weak convergence $u^{\varepsilon} \rightharpoonup u$ in $H^{1}_{0}(D)$, and the strong convergence $f^{\varepsilon} \to f$ in $H^{-1}(D)$, we may calculate the limit of every term in \eqref{eq: tested by vf we} and conclude that 
    \[\int_{D}\nabla u \cdot \nabla \varphi + \left( \int_{\R_{\ge 0}^{2}}k \,\psi[N](dk, d\rho) \right)\int_{D}u\varphi = \inn{f}{\varphi}_{H^{-1}(D), H^{1}_{0}(D)}.\]
    Since $\varphi \in C^{\infty}_{c}(D)$ is arbitrary, this shows that $u$ solves \eqref{eq: homogenized equation}.
\end{proof}

The rest of this section is devoted to the proof of Lemma \ref{thm: construction of osc fct}.

Let $\mathcal{K}$ be a complete separable metric space. For $\delta > 0$, define the thinning maps $\mathcal{T}_{\delta}, \mathcal{T}^{\delta}: \mathbb{M}^{d}_{\mathcal{K}} \to \mathbb{M}^{d}_{\mathcal{K}}$ by
\begin{equation}
    \label{def: thinning map}
    \mathcal{T}_{\delta}(Y) := \{(z, \rho) \in Y: \min_{(z', \rho') \in Y, z \neq z'}|z - z'| < \delta\}
\end{equation}
and $\mathcal{T}^{\delta}(Y) := Y \setminus \mathcal{T}_{\delta}(Y)$. In \cite[Proposition 7.6]{Bas25}, it is shown that each $\mathcal{T}_{\delta}$ is measurable (strictly speaking, the author of \cite{Bas25} deals with the case $\mathcal{K} = \R_{\ge 0}$, but the same proof applies to arbitrary $\mathcal{K}$). Further, $\mathcal{T}^{\cdot}(\cdot), \mathcal{T}_{\cdot}(\cdot)$ are jointly measurable. Indeed, for every $Y \in \mathcal{M}^{d}_{\mathcal{K}}$ and bounded Borel $A \sub \R^{d} \times \mathcal{K}$, the function $\R_{> 0} \ni \delta \mapsto \#(\mathcal{T}_{\delta}(Y) \cap A) \in \mathbb{Z}_{\ge 0}$ is left continuous since $\mathrm{dom}Y$ is locally finite. Given a stationary MPP $X$, $\mathcal{T}_{\delta}(X), \mathcal{T}^{\delta}(X)$ are stationary \cite[Proposition 7.1]{Bas25} since $\mathcal{T}_{\delta}, \mathcal{T}^{\delta}$ commute with the shifts $S_{\tau}$, $\tau \in \R^{d}$, and we write
\begin{equation}
    \psi_{\delta}[X] := \psi[\mathcal{T}_{\delta}(X)], \psi^{\delta}[X] := \psi[\mathcal{T}^{\delta}(X)]
\end{equation}
whenever $\psi[X]$ is well-defined.
\subsection{Lemmas from \texorpdfstring{\cite{giunti2018homogenization}}{}}
\label{sect: lemma from giunti}
The lemmas contained in this subsection, which we present with proofs for the reader's convenience, are essentially proved in \cite{giunti2018homogenization} from a different set of assumptions from ours, but the proofs below are almost the same as theirs: we use Lemmas \ref{thm: averaging h over A}, \ref{thm: giunti lem5_2}, instead of \cite[Lemma 5.1, 5.2]{giunti2018homogenization}, respectively.

Let $\Xi$ be a stationary MPP on $\R^{d}$ with marks in $\R_{\ge 0}$ such that $\E[\#(\Xi \cap A)] < \infty$ whenever $A \in \mathcal{B}(\R^{d} \times \R_{\ge 0})$ is bounded and
\begin{equation}
        \label{cond: moment condition on radii}
        \int_{\R_{\ge 0}} \rho^{d - 2}\,\psi[\Xi](d\rho) < \infty \text{ a.s.}
    \end{equation}
Note that this ensures that $\psi[\Xi](\R_{\ge 0}) < \infty$ a.s. The ground process of $\Xi$ is denoted by $\Phi$. For $z \in \Phi$, the unique $\rho \in \R_{\ge 0}$ such that $(z, \rho) \in \Xi$ is denoted by $\rho_{z}$.
\begin{lemma}[cf. {\cite[Lemma 4.2]{giunti2018homogenization}}]
    \label{thm: dividing balls into good and bad parts}
    There exist r.v.s $(r_{\varepsilon})_{0 < \varepsilon \le 1} \sub ]0, 1]$ and $I^{\varepsilon}_{b} \sub \Phi \cap \varepsilon^{-1}W$ such that defining 
    \begin{equation}
        \label{def: Heb}
        H^{\varepsilon}_{b} := \bigcup_{z \in I^{\varepsilon}_{b}}B_{\varepsilon^{\frac{d}{d - 2}}\rho_{z}}(\varepsilon z),
    \end{equation}
    \begin{equation}
        \label{def: Deb}
        D^{\varepsilon}_{b} := \bigcup_{z \in I^{\varepsilon}_{b}}U_{2\varepsilon^{\frac{d}{d - 2}}\rho_{z}}(\varepsilon z),    
    \end{equation}

    \begin{equation}
        \label{def: Ieg}
        I^{\varepsilon}_{g} := (\Phi \cap \varepsilon^{-1}W)\setminus I^{\varepsilon}_{b},        
    \end{equation}
    \begin{equation}
        \label{def: Heg}
        H^{\varepsilon}_{g} := \bigcup_{z \in I^{\varepsilon}_{g}}B_{\varepsilon^{\frac{d}{d - 2}}\rho_{z}}(\varepsilon z),
    \end{equation}
    we have the following:
    \begin{enumerate}
        \item the following a.s. convergences take place as $\varepsilon \searrow 0$:
        \begin{equation}
        \label{eq: re to 0}
        r_{\varepsilon} \to 0,
    \end{equation}
    \begin{equation}
        \label{eq: e^d Ieb to 0}
        \varepsilon^{d}\#I^{\varepsilon}_{b} \to 0,
    \end{equation}
    \begin{equation}
        \label{eq: cap Heb Deb to 0}
        \mathrm{Cap}(H^{\varepsilon}_{b}, D^{\varepsilon}_{b}) \to 0,
    \end{equation}
    \begin{equation}
        \label{eq: cap Deb to 0}
        \mathrm{Cap}(D^{\varepsilon}_{b}) \to 0,
    \end{equation}
    and for every $\delta > 0$,
    \begin{equation}
        \label{eq: points in thinned processs near Deb vanish asymptotically}
        \varepsilon^{d}\#\{z \in \mathcal{T}^{2\delta}(\Phi): \varepsilon z \in W, d(\varepsilon z, D^{\varepsilon}_{b}) \le \delta \varepsilon\} \to 0;
    \end{equation}
    \item a.s., for every $\varepsilon \in ]0, 1]$,
    \begin{equation}
        \label{eq: balls on good part are disjoint}
        (B_{\varepsilon^{\frac{d}{d - 2}}\rho_{z}}(\varepsilon z))_{z \in I^{\varepsilon}_{g}} \text{ is disjoint};
    \end{equation}
    \item a.s., for every $\varepsilon \in ]0, 1]$ and $z \in I^{\varepsilon}_{g}$, 
    \begin{equation}
        \label{eq: centers on good part are well separated}
        \min_{\substack{z_{1} \in \Phi,\\ z \neq z_{1}}}|\varepsilon z - \varepsilon z_{1}| \ge 2\varepsilon r_{\varepsilon},
    \end{equation}
    \begin{equation}
        \label{eq: radii on good part are small}
        \varepsilon^{\frac{d}{d - 2}}\rho_{z} < \frac{1}{2}\varepsilon r_{\varepsilon},
    \end{equation}
    and
    \begin{equation}
        \label{eq: good and bad parts are well separated}
        d(H^{\varepsilon}_{g}, D^{\varepsilon}_{b}) \ge \frac{1}{2}\varepsilon r_{\varepsilon}.
    \end{equation}
    \end{enumerate}
\end{lemma}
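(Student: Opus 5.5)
Following \cite[Lemma 4.2]{giunti2018homogenization}, we will define the bad set through two mechanisms, points with large radii and points that are too close to other points, and then use the strong law of large numbers (Lemma \ref{thm: averaging h over A}) together with the moment condition \eqref{cond: moment condition on radii} to show that the bad set is asymptotically negligible.

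\textbf{Choice of $r_\varepsilon$ and $I^\varepsilon_b$.} Take a nonrandom sequence $\delta_\varepsilon\searrow 0$ slowly, for instance $r_\varepsilon := \delta_\varepsilon$ with $\delta_\varepsilon \to 0$ and $\varepsilon^{\frac{2}{d-2}}/\delta_\varepsilon \to 0$. Call $z\in\Phi\cap\varepsilon^{-1}W$ bad in either of two cases.
\begin{enumerate}
\item[(a)] $\varepsilon^{\frac{d}{d-2}}\rho_z \ge \frac14\varepsilon r_\varepsilon$, i.e.\ $\rho_z \ge \frac14 r_\varepsilon \varepsilon^{-\frac{2}{d-2}}$ (a large radius).
\item[(b)] $z$ lies within distance $2r_\varepsilon$ of another point, i.e.\ $z\in\mathcal T_{2r_\varepsilon}(\Phi)$.
\end{enumerate}
Since the bad balls of type (a) may still be close to good balls, we then enlarge $I^\varepsilon_b$ iteratively: a point is also declared bad if its ball of radius $\varepsilon r_\varepsilon$ around $\varepsilon z$ meets some $U_{2\varepsilon^{d/(d-2)}\rho_{z'}}(\varepsilon z')$ with $z'$ already bad. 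As in \cite{giunti2018homogenization}, one closure step suffices if we also put into (a) all points with $\rho_z$ somewhat smaller, since a point $z$ whose ball is close to a large ball $D_{z'}$ lies in the $\varepsilon r_\varepsilon$-neighbourhood of $D_{z'}$.

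\textbf{Verification of items 2 and 3.} Properties \eqref{eq: centers on good part are well separated}, \eqref{eq: radii on good part are small} and \eqref{eq: good and bad parts are well separated} hold by construction. Disjointness \eqref{eq: balls on good part are disjoint} then follows because the radii are $<\frac12\varepsilon r_\varepsilon$ while the centers are at least $2\varepsilon r_\varepsilon$ apart.

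\textbf{Verification of the convergences.}
\begin{itemize}
\item[(i)] For the points of type (b), Lemma \ref{thm: averaging h over A} applied to $\mathcal T_{2\delta}(\Xi)$ with $\delta$ fixed gives
\[
\limsup_{\varepsilon\to 0} \varepsilon^d\#(\mathcal T_{2r_\varepsilon}(\Phi)\cap\varepsilon^{-1}W) \le |W|\,\psi_{2\delta}[\Xi](\R_{\ge0}).
\]
The right-hand side tends to $0$ as $\delta\to0$ by monotone convergence, because $\bigcap_\delta\mathcal T_{2\delta}(\Phi)=\varnothing$ by local finiteness.
\item[(ii)] For the points of type (a),
\[
\varepsilon^d\#\{\rho_z\ge M\} \le M^{-(d-2)}\varepsilon^d\sum\rho_z^{d-2}.
\]
By the law of large numbers, $\varepsilon^d\sum_{\rho_z \ge M}\rho_z^{d-2}$ converges to $|W|\int_{\rho\ge M}\rho^{d-2}\,d\psi$, which tends to $0$ as $M\to\infty$.
\item[(iii)] Subadditivity and scaling give
\[
\mathrm{Cap}(D^\varepsilon_b)\lesssim \sum_{z\in I^\varepsilon_b}\varepsilon^d\rho_z^{d-2}.
\]
For the same reason,
\[
\mathrm{Cap}(H^\varepsilon_b,D^\varepsilon_b)\le \sum_{z \in I^\varepsilon_b}\mathrm{Cap}(B_{\varepsilon^{d/(d-2)}\rho_z},U_{2\varepsilon^{d/(d-2)}\rho_z})\lesssim \sum_{z \in I^\varepsilon_b} \varepsilon^d\rho_z^{d-2},
\]
where we use a cutoff function in each enlarged ball and the fact that the capacity of a union is at most the sum of the capacities.
\item[(iv)] To show $\varepsilon^d\sum_{I_b^\varepsilon}\rho_z^{d-2}\to0$, split the sum at a level $\rho\le M$ versus $\rho>M$. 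The part with $\rho_z > M$ is small by the tail estimate in (ii). The part with $\rho_z \le M$ is bounded by $M^{d-2}\varepsilon^d\#I^\varepsilon_b\to 0$.
\end{itemize}

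\textbf{Main obstacle.} The step we expect to be hardest is \eqref{eq: points in thinned processs near Deb vanish asymptotically}, together with controlling $\#I_b^\varepsilon$ after the closure step. Here we must count the points of $\mathcal T^{2\delta}(\Phi)$ lying within $\delta\varepsilon$ of $D^\varepsilon_b$. Since the points of $\mathcal T^{2\delta}(\Phi)$ are $2\delta$-separated, the number of them within distance $\delta\varepsilon$ of a ball of radius $R$ is $\lesssim 1+(R/(\varepsilon\delta))^d$. Summing over $z'\in I_b^\varepsilon$ with $R=2\varepsilon^{d/(d-2)}\rho_{z'}$, we would bound the count by $\#I^\varepsilon_b+\delta^{-d}\sum_{z'}(\varepsilon^{2/(d-2)}\rho_{z'})^d$. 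After multiplying by $\varepsilon^d$, the first term vanishes. The second term is at most $\delta^{-d}\,\max_{z'}(\varepsilon^{2/(d-2)}\rho_{z'})^{2}\;\varepsilon^d\sum_{z'}\rho_{z'}^{d-2}$. Controlling it needs $\max_{z\in\varepsilon^{-1}W}\varepsilon^{2/(d-2)}\rho_z\to0$. This bound follows from the $(d-2)$-moment law of large numbers, since a single term $\varepsilon^d\rho_z^{d-2}$ must vanish when the averages over shrinking tails converge; this is the same fact used in Corollary \ref{thm: convergence of heat eq}.
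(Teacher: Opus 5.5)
\textbf{Gap: the closure points are never counted.} You never prove $\varepsilon^{d}\#I^{\varepsilon}_{b}\to 0$ for the points added in the closure step. Yet your items (iii) and (iv), and your argument for \eqref{eq: points in thinned processs near Deb vanish asymptotically}, all assume it. With your deterministic $r_\varepsilon$, the standard count fails. Closure points are not of type (b), so they are $2\varepsilon r_\varepsilon$-separated, and they lie within $\varepsilon r_\varepsilon$ of the type-(a) balls. The volume argument then gives
\[
\varepsilon^{d}\#\{\text{closure points}\}\lesssim r_{\varepsilon}^{-d}\sum_{z'\in(a)}\bigl(\varepsilon^{\frac{d}{d-2}}\rho_{z'}\bigr)^{d}\le r_{\varepsilon}^{-d}\,m_{\varepsilon}^{2}\,\varepsilon^{d}\sum_{z\in\Phi\cap\varepsilon^{-1}W}\rho_{z}^{d-2},\qquad m_{\varepsilon}:=\max_{z\in\Phi\cap\varepsilon^{-1}W}\varepsilon^{\frac{d}{d-2}}\rho_{z}.
\]
Here $m_\varepsilon\to 0$ a.s., but at a random, distribution-dependent rate. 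For heavy-tailed radii with barely finite $(d-2)$-moment the decay is only logarithmic, so for a prescribed $r_\varepsilon$ such as $\varepsilon^{\beta}$ the right-hand side need not vanish. This is exactly why the paper chooses $r_\varepsilon$ at random, $r_\varepsilon=(m_\varepsilon^{1/d}\wedge 1)\vee\varepsilon^{\alpha}$. Then $m_\varepsilon\le r_\varepsilon^{d}$ for small $\varepsilon$, the bound becomes $r_\varepsilon^{d}\,\varepsilon^{d}\sum\rho_z^{d-2}\to0$, and the same inequality handles the large-radius set $J^{\varepsilon}_{b}$. Your item (iv), ``bounded by $M^{d-2}\varepsilon^{d}\#I^{\varepsilon}_{b}\to0$'', is therefore circular as written.

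\textbf{How to repair it within your framework.} You can keep $r_\varepsilon$ deterministic if you apply your own ``main obstacle'' count at a fixed second scale.
\begin{enumerate}
\item[(1)] Fix $\delta>0$. Closure points lying in $\mathcal{T}_{2\delta}(\Phi)$ satisfy $\limsup_\varepsilon \varepsilon^{d}\#\le |W|\,\psi_{2\delta}[\Xi](\R_{\ge0})$ by Lemma \ref{thm: averaging h over A}.
\item[(2)] Closure points lying in $\mathcal{T}^{2\delta}(\Phi)$ are $2\delta\varepsilon$-separated. Once $r_\varepsilon\le\delta$, they lie within $\delta\varepsilon$ of type-(a) balls. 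Your counting then bounds their normalized number by $\lesssim \varepsilon^{d}\#(a)+\delta^{-d}m_\varepsilon^{2}\,\varepsilon^{d}\sum\rho_z^{d-2}\to0$.
\item[(3)] Let $\delta\to 0$ using Lemma \ref{thm: continuity of thinning}.
\end{enumerate}
With this step added, your route works. Your one-line proof of \eqref{eq: points in thinned processs near Deb vanish asymptotically} (separation of $\mathcal{T}^{2\delta}(\Phi)$ plus a volume count around each bad ball) is then simpler than the paper's decomposition into $I^{\varepsilon}_b$, $E^{\varepsilon}_{\delta}$ and $C^{\varepsilon}_{\delta}$.

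\textbf{Exponent slip.} In that count you dropped a factor $\varepsilon^{2}$. Since $\varepsilon^{d}(\varepsilon^{\frac{2}{d-2}}\rho)^{d}=(\varepsilon^{\frac{d}{d-2}}\rho)^{d}$, the quantity that must vanish is $m_\varepsilon$. It is not $\max\varepsilon^{\frac{2}{d-2}}\rho_z=\varepsilon^{-1}m_\varepsilon$, which can diverge. Your justification, that every $\varepsilon^{d}\rho_z^{d-2}$ vanishes uniformly, proves the correct statement $m_\varepsilon\to0$.
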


\begin{proof}
    Fix $\alpha \in ]0, \frac{d}{d - 2}[$. For $\varepsilon \in]0, 1]$, set 
    \begin{equation}
        \label{def: re}
        r_{\varepsilon} := \left( \left( \varepsilon^{\frac{d}{d - 2}}\max_{z \in \Phi \cap \varepsilon^{-1}W}\rho_{z} \right)^{\frac{1}{d}} \wedge 1 \right) \vee \varepsilon^{\alpha}.
    \end{equation}

        Put 
        \begin{equation}
            \label{def: Fe}
            F^{\varepsilon} := \{z \in \Phi \cap \varepsilon^{-1}W: \varepsilon^{\frac{d}{d - 2}}\rho_{z} \ge \varepsilon\}.
        \end{equation}
        If $F^{\varepsilon} = \varnothing$, then $r_{\varepsilon} \le \varepsilon^{\frac{1}{d}} \vee \varepsilon^{\alpha}$. If $F^{\varepsilon} \neq \varnothing$, then 
        \begin{equation}
            \label{eq: evaluating re}
            r_{\varepsilon} \le \left( \varepsilon^{\frac{d}{d - 2}}\max_{z \in F^{\varepsilon}}\rho_{z} \right)^{\frac{1}{d}} \vee \varepsilon^{\alpha} \le \left( \varepsilon^{d}\sum_{z \in F^{\varepsilon}}(\rho_{z})^{d - 2} \right)^{\frac{1}{d(d - 2)}} \vee \varepsilon^{\alpha}.
        \end{equation}
        Further, 
        \[\varepsilon^{d}\#F^{\varepsilon} \le \varepsilon^{d}\sum_{z \in F^{\varepsilon}}(\varepsilon^{\frac{2}{d - 2}}\rho_{z})^{d - 2} \le \varepsilon^{2}\cdot \varepsilon^{d}\sum_{z \in \Phi \cap \varepsilon^{-1}W}\rho_{z}^{d - 2} \to 0 \text{ a.s.}\]
        by Lemma \ref{thm: averaging h over A}. Hence, by Lemma \ref{thm: giunti lem5_2}, the right-hand side of \eqref{eq: evaluating re} vanishes a.s. as $\varepsilon \searrow 0$, and \eqref{eq: re to 0} is established.

    For $\varepsilon \in ]0, 1]$, define
    \begin{equation}
        \label{def: eta e}
        \eta_{\varepsilon} := \varepsilon r_{\varepsilon},
    \end{equation} 
    \begin{equation}
        \label{def: Jeb}
        J^{\varepsilon}_{b} := \{z \in \Phi \cap \varepsilon^{-1}W: 2\varepsilon^{\frac{d}{d - 2}}\rho_{z} \ge \eta_{\varepsilon}\},
    \end{equation}
    \begin{equation}
        \label{def: Keb}
        K^{\varepsilon}_{b} := \mathcal{T}_{2r_{\varepsilon}}(\Phi) \cap \varepsilon^{-1}W \setminus J^{\varepsilon}_{b},
    \end{equation}
    \begin{equation}
        \label{def: tilde Heb}
        \tilde{H}^{\varepsilon}_{b} := \bigcup_{z \in J^{\varepsilon}_{b}}B_{2\varepsilon^{\frac{d}{d - 2}}\rho_{z}}(\varepsilon z),
    \end{equation}
    \begin{equation}
        \label{def: tilde Ieb}
        \tilde{I}^{\varepsilon}_{b} := \{z \in \Phi \setminus (J^{\varepsilon}_{b} \cup K^{\varepsilon}_{b}): \varepsilon z \in W, B_{\eta_{\varepsilon}}(\varepsilon z) \cap \tilde{H}^{\varepsilon}_{b} \neq \varnothing\},
    \end{equation}
    \begin{equation}
        \label{def: Ieb}
        I^{\varepsilon}_{b} := \tilde{I}^{\varepsilon}_{b} \cup J^{\varepsilon}_{b} \cup K^{\varepsilon}_{b}.
    \end{equation}
    Then by subadditivity of capacity, 
    \begin{equation*}
        \label{eq: evaluate cap Heb in Deb}
        \begin{split}
            \capa{H^{\varepsilon}_{b}}{D^{\varepsilon}_{b}} &\le \capa{B_{\varepsilon^{\frac{d}{d - 2}}\rho_{z}}(\varepsilon z)}{U_{2\varepsilon^{\frac{d}{d - 2}}\rho_{z}}(\varepsilon z)}\\
            &\lesssim \varepsilon^{d}\sum_{z \in I^{\varepsilon}_{b}}(\rho_{z})^{d - 2}
        \end{split}
    \end{equation*}
    and 
    \[\mathrm{Cap}(D^{\varepsilon}_{b}) \lesssim \varepsilon^{d}\sum_{z \in I^{\varepsilon}_{b}}(\rho_{z})^{d - 2}.\]
    Thus, by virtue of Lemma \ref{thm: giunti lem5_2}, to show \eqref{eq: cap Heb Deb to 0} and \eqref{eq: cap Deb to 0}, it suffices to show the convergence \eqref{eq: e^d Ieb to 0}.

    By \eqref{eq: re to 0} and Lemma \ref{thm: averaging h over A},
        \begin{equation}
            \label{eq: jeb to 0}
            \begin{split}
                \varepsilon^{d}\#J^{\varepsilon}_{b} &\lesssim \varepsilon^{d}\sum_{z \in J^{\varepsilon}_{b}}(\varepsilon^{\frac{d}{d - 2}}\rho_{z}\eta_{\varepsilon}^{-1})^{d}\\
                &\le r_{\varepsilon}^{-d}\left(\max_{z \in \Phi \cap \varepsilon^{-1}W}\varepsilon^{\frac{d}{d - 2}}\rho_{z}\right)^{2}\sum_{z \in \Phi \cap \varepsilon^{-1}W}\varepsilon^{d}\rho_{z}^{d - 2}\\
                &\le r_{\varepsilon}^{d}\varepsilon^{d}\sum_{z \in \Phi \cap \varepsilon^{-1}W}\rho_{z}^{d - 2}\\
                &\to 0 \text{ a.s.}
            \end{split}
        \end{equation}

        Also, by Lemma \ref{thm: continuity of thinning}, 
        \begin{equation}
            \label{eq: Keb to 0}
           \begin{split}
            \limsup_{\varepsilon}\varepsilon^{d}\#K^{\varepsilon}_{b} &\le \limsup_{\varepsilon}\varepsilon^{d}\#\left( \mathcal{T}_{m^{-1}}(\Phi) \cap (\varepsilon^{-1}W) \right)\\
            &\le |W|\psi_{m^{-1}}[\Xi](\R_{\ge 0})\\
            &\to 0 \text{ a.s. as $m \to \infty$.}
           \end{split}
        \end{equation}

        Next, we show that $\varepsilon^{d}\#\tilde{I}^{\varepsilon}_{b} \to 0$ a.s.
        If $z \in \tilde{I}^{\varepsilon}_{b}$, then
        \[\min_{\substack{z_{1} \in \Phi,\\ z \neq z_{1}}}|\varepsilon z - \varepsilon z_{1}| \ge 2\eta_{\varepsilon},\]
        since $z \notin K^{\varepsilon}_{b} \cup J^{\varepsilon}_{b}$.
        Hence, $(B_{\eta_{\varepsilon}}(\varepsilon z))_{z \in \tilde{I}^{\varepsilon}_{b}}$ is essentially disjoint, in the sense that the intersection of any two of the members of that family is of $\mathcal{L}^{d}$-null. Thus,
        \begin{equation}
            \label{eq: tildeIeb to 0}
            \begin{split}
                \varepsilon^{d}\#\tilde{I}^{\varepsilon}_{b} &\lesssim \varepsilon^{d}\eta_{\varepsilon}^{-d}\sum_{z \in \tilde{I}^{\varepsilon}_{b}}|B_{\eta_{\varepsilon}(\varepsilon z)}|\\
                &= r_{\varepsilon}^{-d}\left| \bigcup_{z \in \tilde{I}^{\varepsilon}_{b}}B_{\eta_{\varepsilon}}(\varepsilon z)\right|\\
                &\le r_{\varepsilon}^{-d}\left|\{d(\cdot, \tilde{H}^{\varepsilon}_{b}) \le 2\eta_{\varepsilon}\}\right|\\
                &= r_{\varepsilon}^{-d}\left| \bigcup_{z \in J^{\varepsilon}_{b}}B_{2\varepsilon^{\frac{d}{d - 2}}\rho_{z} + 2\eta_{\varepsilon}}(\varepsilon z)\right|\\
                &\lesssim r_{\varepsilon}^{-d}\sum_{z \in J^{\varepsilon}_{b}}\left( \varepsilon^{\frac{d}{d - 2}}\rho_{z} \right)^{d}\\
                &\lesssim r_{\varepsilon}^{-d}\left( \max_{z \in \Phi \cap \varepsilon^{-1}W}\varepsilon^{\frac{d}{d - 2}}\rho_{z} \right)^{2}\varepsilon^{d}\sum_{z \in J^{\varepsilon}_{b}}\rho^{d - 2}\\
                &\le r_{\varepsilon}^{d}\varepsilon^{d}\sum_{z \in J^{\varepsilon}_{b}}(\rho_{z})^{d - 2}\\
                &\to 0 \text{ a.s.}
            \end{split}
        \end{equation}
        by Lemma \ref{thm: giunti lem5_2}.

    By \eqref{eq: jeb to 0}, \eqref{eq: Keb to 0}, and \eqref{eq: tildeIeb to 0}, $\varepsilon^{d}\#I^{\varepsilon}_{b} \to 0$ a.s.

        We now turn to the proof of \eqref{eq: points in thinned processs near Deb vanish asymptotically}. For $\varepsilon \in ]0, 1]$ and $\delta > 0$, define 
        \begin{equation}
            \label{def: Eed}
            E^{\varepsilon}_{\delta} := \{z \in I^{\varepsilon}_{g}: d(\varepsilon z, \tilde{H}^{\varepsilon}_{b}) \le \delta \varepsilon\}
        \end{equation}
        and 
        \begin{equation}
            \label{def: Ced}
            C^{\varepsilon}_{\delta} := \left\{z \in I^{\varepsilon}_{g} \setminus \mathcal{T}_{2\delta}(\Phi): d\left( \varepsilon z, \bigcup_{z_{1} \in K^{\varepsilon}_{b} \cup \tilde{I}^{\varepsilon}_{b}}B_{2\varepsilon^{\frac{d}{d - 2}}\rho_{z_{1}}}(\varepsilon z_{1}) \right) \le \delta \varepsilon\right\}.
        \end{equation}
        Then 
        \[\{z \in \mathcal{T}^{2\delta}(\Phi): \varepsilon z \in W, d(\varepsilon z, D^{\varepsilon}_{b}) \le \delta \varepsilon\} \sub I^{\varepsilon}_{b} \cup E^{\varepsilon}_{\delta} \cup C^{\varepsilon}_{\delta}.\]

        If $z \in C^{\varepsilon}_{\delta}$, then there is $z_{1} \in K^{\varepsilon}_{b} \cup \tilde{I}^{\varepsilon}_{b}$ such that
        \[d\left( \varepsilon z, B_{2\varepsilon^{\frac{d}{d - 2}}\rho_{z_{1}}}(\varepsilon z_{1}) \right) \le \delta \varepsilon,\]
        and thus 
        \[|\varepsilon z - \varepsilon z_{1}| \le \delta \varepsilon + 2\varepsilon^{\frac{d}{d - 2}}\rho_{z_{1}} \le \delta \varepsilon + \eta_{\varepsilon}.\]
        Hence, $C^{\varepsilon}_{\delta} \sub \mathcal{T}_{\delta + r_{\varepsilon}}(\Phi) \setminus \mathcal{T}_{2 \delta}(\Phi)$. By \eqref{eq: re to 0}, this implies that a.s., 
        \begin{equation}
            \label{eq: Ced is empty for small e}
            C^{\varepsilon}_{\delta} = \varnothing
        \end{equation}
        for sufficiently small $\varepsilon$.

        Next, we show that
        \begin{equation}
            \label{eq: Eed to 0}
            \varepsilon^{d}\#E^{\varepsilon}_{\delta} \to 0 \text{ a.s.}
        \end{equation}
        as $\varepsilon \searrow 0$. If $z \in E^{\varepsilon}_{\delta}$, then $d(\varepsilon z, B_{2\varepsilon^{\frac{d}{d - 2}}\rho_{z_{1}}}(\varepsilon z_{1})) \le \delta \varepsilon$ for some $z_{1} \in J^{\varepsilon}_{b}$, and thus
        \[B_{\eta_{\varepsilon}}(\varepsilon z) \sub B_{2\varepsilon^{\frac{d}{d - 2}}\rho_{z_{1}} + \delta \varepsilon + \eta_{\varepsilon}}(\varepsilon z_{1}).\]
        Hence, a.s., for sufficiently small $\varepsilon$ that $(\varepsilon^{\frac{d}{d - 2}}\max_{z \in \Phi \cap \varepsilon^{-1}W}\rho_{z})^{\frac{1}{d}} \le r_{\varepsilon} \le \delta$,
        \begin{equation*}
            \label{eq: evaluate Eed}
            \begin{split}
                \varepsilon^{d}\#E^{\varepsilon}_{\delta} &\lesssim \varepsilon^{d}\eta_{\varepsilon}^{-d}\sum_{z \in E^{\varepsilon}_{\delta}}|B_{\eta_{\varepsilon}}(\varepsilon z)|\\
                &= r_{\varepsilon}^{-d}\left| \bigcup_{z \in E^{\varepsilon}_{\delta}}B_{\eta_{\varepsilon}}(\varepsilon z) \right|\\
                &\le r_{\varepsilon}^{-d}\left| \bigcup_{z_{1} \in J^{\varepsilon}_{b}}B_{2\varepsilon^{\frac{d}{d - 2}}\rho_{z} + \delta \varepsilon + \eta_{\varepsilon}}(\varepsilon z_{1}) \right|\\
                &\lesssim r_{\varepsilon}^{-d}\sum_{z_{1} \in J^{\varepsilon}_{b}}\left(\left( 2 + \frac{2\delta}{r_{\varepsilon}} + 2 \right) \varepsilon^{\frac{d}{d - 2}}\rho_{z_{1}} \right)^{d}\\
                &\lesssim \delta^{d}r_{\varepsilon}^{-2d}\sum_{z_{1} \in J^{\varepsilon}_{b}}\left( \varepsilon^{\frac{d}{d - 2}}\rho_{z_{1}} \right)^{d}\\
                &\le \delta^{d}\varepsilon^{d}\sum_{z_{1} \in J^{\varepsilon}_{b}}(\rho_{z_{1}})^{d - 2}.
            \end{split}
        \end{equation*}
        Thus, by \eqref{eq: jeb to 0} and Lemma \ref{thm: giunti lem5_2}, $\varepsilon^{d}\#E^{\varepsilon}_{\delta} \to 0$ a.s.

        Finally, we show \eqref{eq: good and bad parts are well separated}. Notice that for every $z \in I^{\varepsilon}_{g}$ and $z_{1} \in I^{\varepsilon}_{b}$,  
        \begin{equation}
            \label{eq: balls in good and bad parts are disjoint}
            B_{\eta_{\varepsilon}}(\varepsilon z) \cap B_{2\varepsilon^{\frac{d}{d - 2}}\rho_{z_{1}}}(\varepsilon z_{1}) = \varnothing.
        \end{equation}
        Indeed, since $z \notin \tilde{I}^{\varepsilon}_{b}$, \eqref{eq: balls in good and bad parts are disjoint} holds if $z_{1} \in J^{\varepsilon}_{b}$. Also, since $z \notin K^{\varepsilon}_{b} \cup J^{\varepsilon}_{b}$, $|\varepsilon z - \varepsilon z_{1}| \ge 2\eta_{\varepsilon}$, and thus \eqref{eq: balls in good and bad parts are disjoint} holds if $z_{1} \in I^{\varepsilon}_{b} \setminus J^{\varepsilon}_{b}$. By \eqref{eq: balls in good and bad parts are disjoint}, 
    \begin{equation*}
        \label{evaluate dist betweed Heg and Deb}
        \begin{split}
            d\left( H^{\varepsilon}_{g}, D^{\varepsilon}_{b} \right) &= \min_{\substack{z \in I^{\varepsilon}_{g},\\ z_{1} \in I^{\varepsilon}_{b}}}d\left( B_{\varepsilon^{\frac{d}{d - 2}}\rho_{z}}(\varepsilon z), B_{2\varepsilon^{\frac{d}{d - 2}}\rho_{z_{1}}}(\varepsilon z_{1}) \right)\\
            &\ge \min_{z \in I^{\varepsilon}_{g}}(\eta_{\varepsilon} - \varepsilon^{\frac{d}{d - 2}}\rho_{z})\\
            &\ge \frac{1}{2}\eta_{\varepsilon}.
        \end{split}
    \end{equation*}
\end{proof}

\begin{lemma}[cf.{\cite[Lemma 3.1]{giunti2018homogenization}}]
    \label{thm: giunti 3_1}
    Let $M > 1$. For $\varepsilon \in ]0, 1]$ and $z \in I^{\varepsilon}_{g}$, define 
    \begin{equation}
        \label{def: dezr}
        d^{\varepsilon}_{z} := \min\left\{ \varepsilon, \frac{1}{2}\min_{\substack{z_{1} \in \Phi,\\ z \neq z_{1}}}|\varepsilon z - \varepsilon z_{1}|, d(\varepsilon z, D^{\varepsilon}_{b}) \right\}
    \end{equation}
    and
    \begin{equation}
        \label{def: Uezr}
        U^{\varepsilon}_{z} := U_{d^{\varepsilon}_{z}}(\varepsilon z).
    \end{equation}
    Then for a.e. $\omega \in \Omega$ and every $\varepsilon \in ]0, 1]$ and $z \in I^{\varepsilon}_{g}$, 
    \begin{equation}
        \label{eq: evaluate dezr from below}
        2\varepsilon^{\frac{d}{d - 2}}\rho_{z} \le d^{\varepsilon}_{z}.
    \end{equation}
    Then define 
    \begin{equation}
        \label{def: cap pot of ball in Uezr}
        e^{\varepsilon}_{z} := \text{the capacitary potential of $B_{\varepsilon^{\frac{d}{d - 2}}\rho_{z}}(\varepsilon z)$ in $U^{\varepsilon}_{z}$},
    \end{equation}
    \begin{equation}
        \label{def: cap pot of truncated ball in Uezr}
        e^{\varepsilon}_{z, M} := \text{the capacitary potential of $B_{\varepsilon^{\frac{d}{d - 2}}(\rho_{z} \wedge M)}(\varepsilon z)$ in $U^{\varepsilon}_{z}$},
    \end{equation}
    \begin{equation}
        \label{def: osc fct for balls on good part}
        e^{\varepsilon}_{g} := \sum_{z \in I^{\varepsilon}_{g}}e^{\varepsilon}_{z},
    \end{equation}
    \begin{equation}
        \label{def: IegM}
        I^{\varepsilon}_{g, M} := \left\{ z \in I^{\varepsilon}_{g}: d^{\varepsilon}_{z} \ge \frac{\varepsilon}{M} \right\},
    \end{equation}
    \begin{equation}
        \label{def: truncated osc fct for balls on good part}
        e^{\varepsilon}_{g, M} := \sum_{z \in I^{\varepsilon}_{g, M}}e^{\varepsilon}_{z, M},
    \end{equation}
    and 
    \begin{equation}
        \label{def: mueM}
        \mu^{\varepsilon}_{M} := -\sum_{z \in I^{\varepsilon}_{g, M}}\frac{\partial e^{\varepsilon}_{z, M}}{\partial n}\Big|_{\partial U^{\varepsilon}_{z}}\delta_{\partial U^{\varepsilon}_{z}}.
    \end{equation}
    Then the following a.s. convergences take place as $\varepsilon \searrow 0$:
    \begin{equation}
        \label{eq: IegM to N2/M asymptotically}
        \varepsilon^{d}\#\left( \mathcal{T}^{\frac{2}{M}}(\Phi) \cap \varepsilon^{-1}W \setminus I^{\varepsilon}_{g, M} \right) \to 0
    \end{equation}
    and
    \begin{equation}
        \label{eq: osc fct for balls on good part to 0 in weak H1}
        e^{\varepsilon}_{g} \rightharpoonup 0 \text{ weakly in $H^{1}(\R^{d})$}.
    \end{equation}
\end{lemma}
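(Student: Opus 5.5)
We prove the three assertions of Lemma \ref{thm: giunti 3_1} in order: first \eqref{eq: evaluate dezr from below}, then \eqref{eq: IegM to N2/M asymptotically}, then \eqref{eq: osc fct for balls on good part to 0 in weak H1}.

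\textbf{Step 1: the lower bound \eqref{eq: evaluate dezr from below}.} Each term in the definition \eqref{def: dezr} of $d^{\varepsilon}_{z}$ is compared with $2\varepsilon^{\frac{d}{d-2}}\rho_{z}$ using Lemma \ref{thm: dividing balls into good and bad parts}.
\begin{itemize}
\item By \eqref{eq: radii on good part are small} and $r_{\varepsilon} \le 1$, we get $2\varepsilon^{\frac{d}{d-2}}\rho_{z} < \varepsilon r_{\varepsilon} \le \varepsilon$.
\item By \eqref{eq: centers on good part are well separated}, half the distance to the nearest other point is at least $\varepsilon r_{\varepsilon} > 2\varepsilon^{\frac{d}{d-2}}\rho_{z}$.
\item The proof of \eqref{eq: good and bad parts are well separated}, namely \eqref{eq: balls in good and bad parts are disjoint}, gives $B_{\eta_{\varepsilon}}(\varepsilon z) \cap D^{\varepsilon}_{b} = \varnothing$. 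Hence $d(\varepsilon z, D^{\varepsilon}_{b}) \ge \eta_{\varepsilon} = \varepsilon r_{\varepsilon} > 2\varepsilon^{\frac{d}{d-2}}\rho_{z}$.
\end{itemize}
In particular $U^{\varepsilon}_{z}$ contains the ball $B_{\varepsilon^{d/(d-2)}\rho_z}(\varepsilon z)$ with a factor-$2$ margin. So $e^{\varepsilon}_{z}$ and $e^{\varepsilon}_{z,M}$ are well defined and have disjoint supports for distinct $z$.

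\textbf{Step 2: the counting statement \eqref{eq: IegM to N2/M asymptotically}.} Take $z \in \mathcal{T}^{2/M}(\Phi)$ with $\varepsilon z \in W$ and $z \notin I^{\varepsilon}_{g,M}$. Then either $z \in I^{\varepsilon}_{b}$, or $z \in I^{\varepsilon}_{g}$ with $d^{\varepsilon}_{z} < \varepsilon/M$.
\begin{itemize}
\item For $\varepsilon$ small, the first term $\varepsilon$ of \eqref{def: dezr} exceeds $\varepsilon/M$, since $M > 1$.
\item The second term is at least $\varepsilon/M$, because $z \in \mathcal{T}^{2/M}(\Phi)$ means all other points lie at distance $\ge 2/M$ in the unscaled picture.
\item Hence $d(\varepsilon z, D^{\varepsilon}_{b}) < \varepsilon/M$.
\end{itemize}
Therefore the set in \eqref{eq: IegM to N2/M asymptotically} is contained in $I^{\varepsilon}_{b} \cup \{z \in \mathcal{T}^{2\delta}(\Phi): \varepsilon z \in W,\ d(\varepsilon z, D^{\varepsilon}_{b}) \le \delta\varepsilon\}$ with $\delta = 1/M$. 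The two pieces vanish after multiplying by $\varepsilon^{d}$, by \eqref{eq: e^d Ieb to 0} and \eqref{eq: points in thinned processs near Deb vanish asymptotically} respectively.

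\textbf{Step 3: weak convergence \eqref{eq: osc fct for balls on good part to 0 in weak H1}.} The plan is to show $\sup_{\varepsilon}\n{e^{\varepsilon}_{g}}_{H^{1}} < \infty$ and $\n{e^{\varepsilon}_{g}}_{L^{2}} \to 0$. These two facts force every weak limit point to be $0$.

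For the gradient bound, the supports of the $e^{\varepsilon}_{z}$ are disjoint, so
\[\n{\nabla e^{\varepsilon}_{g}}^{2}_{L^{2}} = \sum_{z \in I^{\varepsilon}_{g}} \mathrm{Cap}\bigl(B_{\varepsilon^{\frac{d}{d-2}}\rho_{z}}, U_{d^{\varepsilon}_{z}}\bigr).\]
Since $d^{\varepsilon}_{z} \ge 2\varepsilon^{\frac{d}{d-2}}\rho_{z}$, each term is at most $\mathrm{Cap}(B_{r}, U_{2r}) \lesssim r^{d-2}$ with $r = \varepsilon^{\frac{d}{d-2}}\rho_{z}$. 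Summing gives
\[\n{\nabla e^{\varepsilon}_{g}}^{2}_{L^{2}} \lesssim \varepsilon^{d}\sum_{z \in \Phi \cap \varepsilon^{-1}W}\rho_{z}^{d-2},\]
which is a.s. bounded by Lemma \ref{thm: averaging h over A} together with \eqref{cond: moment condition on radii}.

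For the $L^{2}$ bound, $0 \le e^{\varepsilon}_{g} \le 1$ and its support is contained in $\bigcup_{z} U^{\varepsilon}_{z}$. I would use Poincaré on each ball: $\n{e^{\varepsilon}_{z}}^{2}_{L^{2}} \lesssim (d^{\varepsilon}_{z})^{2}\n{\nabla e^{\varepsilon}_{z}}^{2}_{L^{2}} \le \varepsilon^{2}\n{\nabla e^{\varepsilon}_{z}}^{2}_{L^{2}}$. Summing over $z$ gives $\n{e^{\varepsilon}_{g}}^{2}_{L^{2}} \lesssim \varepsilon^{2}\n{\nabla e^{\varepsilon}_{g}}^{2}_{L^{2}} \to 0$. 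Combined with the uniform gradient bound, this yields the weak convergence.

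The main obstacle is the bookkeeping in Step 2. One must check that the extra case $d(\varepsilon z, D^{\varepsilon}_{b}) < \varepsilon/M$ is exactly what \eqref{eq: points in thinned processs near Deb vanish asymptotically} controls, with the thinning parameter $2\delta = 2/M$ matching. One must also handle the boundary effect that the window $\varepsilon^{-1}W$ introduces no extra points.
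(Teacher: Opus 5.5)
Your proof is correct and follows the paper's argument essentially step for step. That is: the same three comparisons from Lemma \ref{thm: dividing balls into good and bad parts} for the lower bound on $d^{\varepsilon}_{z}$; the same inclusion of the exceptional set into $I^{\varepsilon}_{b}$ together with the thinned points near $D^{\varepsilon}_{b}$; and the same Poincar\'e-plus-capacity bound for the weak convergence. The only cosmetic difference is that you bound $d(\varepsilon z, D^{\varepsilon}_{b})$ using the disjointness \eqref{eq: balls in good and bad parts are disjoint} from inside that lemma's proof, whereas the paper uses its stated separation \eqref{eq: good and bad parts are well separated}.
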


\begin{proof}
    For every $z \in I^{\varepsilon}_{g}$, we have, by Lemma \ref{thm: dividing balls into good and bad parts},
    \[2\varepsilon^{\frac{d}{d - 2}}\rho_{z} \le \eta_{\varepsilon} \le \varepsilon,\]
    \[d\left( \varepsilon z, D^{\varepsilon}_{b} \right) \ge \varepsilon^{\frac{d}{d - 2}}\rho_{z} + d\left( H^{\varepsilon}_{g}, D^{\varepsilon}_{b} \right) \ge 2\varepsilon^{\frac{d}{d - 2}}\rho_{z},\]
    and 
    \[\frac{1}{2}\min_{\substack{z_{1} \in \Phi,\\ z \neq z_{1}}}|\varepsilon z - \varepsilon z_{1}| \ge \eta_{\varepsilon} \ge 2\varepsilon^{\frac{d}{d - 2}}\rho_{z}.\]
    Hence, \eqref{eq: evaluate dezr from below} holds.

    By the definition \eqref{def: dezr} of $d^{\varepsilon}_{z}$,
    \begin{equation}
        \label{eq: evaluate the difference between Ieg and N2/M}
        \begin{split}
            \mathcal{T}^{\frac{2}{M}}(\Phi) \cap \varepsilon^{-1}W \setminus I^{\varepsilon}_{g, M} &\sub \left\{ z \in \mathcal{T}^{\frac{2}{M}}(\Phi) \cap I^{\varepsilon}_{g}: \varepsilon z \in W, d^{\varepsilon}_{z} < \frac{\varepsilon}{M} \right\} \cup I^{\varepsilon}_{b}\\
            &\sub \left\{ z \in \mathcal{T}^{\frac{2}{M}}(\Phi): \varepsilon z \in W, d\left( \varepsilon z, D^{\varepsilon}_{b} \right) < \frac{\varepsilon}{M} \right\} \cup I^{\varepsilon}_{b}.
        \end{split}
    \end{equation}
    Hence, by \eqref{eq: points in thinned processs near Deb vanish asymptotically} and \eqref{eq: e^d Ieb to 0}, the convergence \eqref{eq: IegM to N2/M asymptotically} takes place a.s.

    For $\varepsilon \in ]0, 1]$, by the Poincar{\'e} inequality, 
    \begin{equation}
        \label{eq: evaluate L2 norm of osc fct on good part}
        \int_{\R^{d}}|e^{\varepsilon}_{g}|^{2} = \sum_{z \in I^{\varepsilon}_{g}}\int_{U^{\varepsilon}_{z}}|e^{\varepsilon}_{z}|^{2} \lesssim \sum_{z \in I^{\varepsilon}_{g}}\left( d^{\varepsilon}_{z} \right)^{2}\int_{U^{\varepsilon}_{z}}|\nabla e^{\varepsilon}_{z}|^{2} \le \varepsilon^{2}\int_{\R^{d}}|\nabla e^{\varepsilon}_{g}|^{2}.
    \end{equation}
    Also, by \eqref{eq: evaluate dezr from below},
    \begin{equation}
        \label{eq: evaluate the energy of osc fct on good part}
        \begin{split}
            \int_{\R^{d}}|\nabla e^{\varepsilon}_{g}|^{2} &= \sum_{z \in I^{\varepsilon}_{g}}\capa{B_{\varepsilon^{\frac{d}{d - 2}}\rho_{z}}(\varepsilon z)}{U^{\varepsilon}_{z}}\\
            &\le \sum_{z \in \Phi \cap \varepsilon^{-1}W}\varepsilon^{d}\rho_{z}^{d - 2}\mathrm{Cap}\left(B_{\frac{1}{2}}(0), U_{1}(0)\right).
        \end{split}
    \end{equation}
    From \eqref{eq: evaluate L2 norm of osc fct on good part} and \eqref{eq: evaluate the energy of osc fct on good part}, we have \eqref{eq: osc fct for balls on good part to 0 in weak H1}.
\end{proof}

\subsection{Proof of Lemma \ref{thm: construction of osc fct}}
\label{sect: proof of construction of osc fcts}
First, let us make some definitions.
\begin{itemize}
    \item Let $\Phi$ denote the ground process of $N$, and write 
\begin{equation*}
    Z^{\varepsilon} = \{(z, K^{\varepsilon}_{z}, \rho_{z}): z \in \Phi\},\, N = \{(z, k_{z}, \rho_{z}): z \in \Phi\}.
\end{equation*}

\item Put $\Xi := \{(z, \rho_{z}): z \in \Phi\}$. Since marks of $Z^{\varepsilon}$ belong to $\mathcal{Q}$,
\[\E[\#(\Xi \cap ([0, 1]^{d} \times [0, M]))] = \E[\#(N \cap ([0, 1]^{d} \times [0, \mathrm{Cap}B_{M}(0)] \times [0, M]))]\]
for every $M \in \R_{\ge 0}$. Thus, $\E[\#(\Xi \cap A)] < \infty$ for every bounded $A \in \mathcal{B}(\R^{d} \times \R_{\ge 0})$. Let us then use the notations in Lemma \ref{thm: dividing balls into good and bad parts} and \ref{thm: giunti 3_1} for the MPP $\Xi$.

\item For $\varepsilon \in ]0, 1]$, let $\hat{e}^{\varepsilon}_{b}$ be the capacitary potential of $\hat{H}^{\varepsilon}_{b} := \bigcup_{z \in I^{\varepsilon}_{b}}(\varepsilon z + \varepsilon^{\frac{d}{d - 2}}K^{\varepsilon}_{z})$ in $D^{\varepsilon}_{b}$. Then by \eqref{eq: cap Heb Deb to 0}, $\hat{e}^{\varepsilon}_{b} \to 0$ strongly in $H^{1}(\R^{d})$ a.s.

\item For $z \in I^{\varepsilon}_{g}$, let $\hat{e}^{\varepsilon}_{z}$ be the capacitary potential of $\varepsilon z + \varepsilon^{\frac{d}{d - 2}}K^{\varepsilon}_{z}$ in $U^{\varepsilon}_{z}$, and put 
\begin{equation}
    \hat{e}^{\varepsilon}_{g} := \sum_{z \in I^{\varepsilon}_{g}}\hat{e}^{\varepsilon}_{z}
\end{equation}
and
\begin{equation}
    \label{def: osc fct e}
    \hat{e}^{\varepsilon} := \hat{e}^{\varepsilon}_{g} + \hat{e}^{\varepsilon}_{b}.
\end{equation}
\end{itemize}
    
\begin{proof}[Proof of Lemma \ref{thm: construction of osc fct}]
    Since $\hat{e}^{\varepsilon}_{b} \in K_{\hat{H}^{\varepsilon}_{b}, D^{\varepsilon}_{b}}$ and $\hat{e}^{\varepsilon}_{b} \to 0$ strongly in $H^{1}(\R^{d})$, it suffices to show the following:
    \begin{equation}
        \label{eq: eg is 1 on holes and 0 outside D < e - clDeb}
        \hat{e}^{\varepsilon}_{g} \in K_{\bigcup_{z \in I^{\varepsilon}_{g}}(\varepsilon z + \varepsilon^{\frac{d}{d - 2}}K^{\varepsilon}_{z}), \{d(\cdot, W) < \varepsilon\} \setminus \mathrm{cl}(D^{\varepsilon}_{b})},
    \end{equation}
    \begin{equation}
        \label{eq: eeg to 0 in H1 weak}
        \hat{e}^{\varepsilon}_{g} \xrightharpoonup{H^{1}(\R^{d})} 0 \text{ weakly a.s.},
    \end{equation}
    and \eqref{eq: H5 for e} holds with $\hat{e}^{\varepsilon}$ replaced by $\hat{e}^{\varepsilon}_{g}$.

    By the definition \eqref{def: dezr} of $d^{\varepsilon}_{z}$, $U^{\varepsilon}_{z} \cap \mathrm{cl}D^{\varepsilon}_{b} = \varnothing$ and $U^{\varepsilon}_{z} \sub U_{\varepsilon}(\varepsilon z)$ for $z \in I^{\varepsilon}_{g}$. Hence, \eqref{eq: eg is 1 on holes and 0 outside D < e - clDeb} holds true.

    For $\varepsilon \in ]0, 1]$, $M > 1$, and $z \in I^{\varepsilon}_{g, M}$, define
    \begin{equation}
        \hat{e}^{\varepsilon}_{z, M} := \text{ the capacitary potential of $\varepsilon z + \frac{M}{\rho_{z} \vee M}\varepsilon^{\frac{d}{d - 2}}K^{\varepsilon}_{z}$ in $U^{\varepsilon}_{z}$}.
    \end{equation}
    For $M > 1$, put 
    \begin{equation}
        \hat{e}^{\varepsilon}_{g, M} := \sum_{z \in I^{\varepsilon}_{g, M}}\hat{e}^{\varepsilon}_{z, M}
    \end{equation}
    and
    \begin{equation}
        \hat{\mu}^{\varepsilon}_{M} := -\sum_{z \in I^{\varepsilon}_{g, M}}\frac{\partial \hat{e}^{\varepsilon}_{z, M}}{\partial n}\Big|_{\partial U^{\varepsilon}_{z}}\delta_{\partial U^{\varepsilon}_{z}}.
    \end{equation}
    By the maximum principle, 
    \begin{equation}
        \label{eq: maximum principle cap pot}
        0 \le \hat{e}^{\varepsilon}_{z, M} \le \hat{e}^{\varepsilon}_{z} \le e^{\varepsilon}_{z}.
    \end{equation}
    Hence, 
    \begin{equation*}
        0 \le \hat{e}^{\varepsilon}_{g, M} \le \hat{e}^{\varepsilon}_{g} \le e^{\varepsilon}_{g}.
    \end{equation*}
    Also, 
    \begin{equation*}
        \begin{split}
            \int_{\R^{d}}|\nabla \hat{e}^{\varepsilon}_{g}|^{2} &= \sum_{z \in I^{\varepsilon}_{g}}\capa{\varepsilon z + \frac{M}{\rho_{z} \vee M}K^{\varepsilon}_{z}}{U^{\varepsilon}_{z}}\\
            &\le \sum_{z \in I^{\varepsilon}_{g}}\capa{B_{\varepsilon^{\frac{d}{d - 2}}\rho_{z}}(\varepsilon z)}{U^{\varepsilon}_{z}}\\
            &= \int_{\R^{d}}|\nabla e^{\varepsilon}_{g}|^{2}.
        \end{split}
    \end{equation*}
    Therefore, by \eqref{eq: osc fct for balls on good part to 0 in weak H1}, 
    \begin{equation}
        \label{eq: egM, eg to 0 in weak H1}
        \hat{e}^{\varepsilon}_{g, M}, \hat{e}^{\varepsilon}_{g} \xrightharpoonup{H^{1}(\R^{d})} 0 \text{ weakly a.s.},
    \end{equation}
    and \eqref{eq: eeg to 0 in H1 weak} is established.

    Finally, we prove \eqref{eq: H5 for e}. The strategy is along the same line of that of the proof of \cite[Lemma 3.1]{giunti2018homogenization}: if $v^{\varepsilon} \in H^{1}_{0}(W \setminus H^{\varepsilon})$ and $v^{\varepsilon} \rightharpoonup v \in H^{1}_{0}(W)$ weakly, then we may write 
    \[\inn{\Delta \hat{e}^{\varepsilon}_{g}}{v^{\varepsilon}}_{H^{-1}, H^{1}_{0}} = \inn{\hat{\mu}^{\varepsilon}_{M}}{v^{\varepsilon}}_{H^{-1}, H^{1}_{0}} - \inn{\nabla \hat{e}^{\varepsilon}_{g} - \nabla \hat{e}^{\varepsilon}_{g, M}}{\nabla v^{\varepsilon}}_{L^{2}(W)}.\]
    So, it suffices to show that a.s., 
    \begin{equation}
        \label{eq: mueM converges}
        \hat{\mu}^{\varepsilon}_{M} \xrightarrow[\varepsilon \searrow 0]{H^{-1}(\R^{d})} \chi_{W}\int_{\R_{\ge 0}^{2}}\left( \frac{M}{\rho \vee M} \right)^{d - 2}k \,\psi^{\frac{2}{M}}[N](dk, d\rho),
    \end{equation}
    \begin{equation}
        \label{eq: lim mueM to mu}
        \int_{\R_{\ge 0}^{2}}\left( \frac{M}{\rho \vee M} \right)^{d - 2}k \,\psi^{\frac{2}{M}}[N](dk, d\rho) \xrightarrow[M \to \infty]{} \int_{\R_{\ge 0}^{2}}k \,\psi[N](dk, d\rho),
    \end{equation}
    and 
    \begin{equation}
        \label{eq: nabla eg - egM}
        \limsup_{\varepsilon} \n{\nabla \hat{e}^{\varepsilon}_{g} - \nabla \hat{e}^{\varepsilon}_{g, M}}_{L^{2}(W)} \xrightarrow[M \to \infty]{} 0.
    \end{equation}

    Since 
    \[\varepsilon z + \frac{M}{\rho_{z} \vee M}\varepsilon^{\frac{d}{d - 2}}K^{\varepsilon}_{z} \sub B_{\varepsilon^{\frac{d}{d - 2}}(\rho_{z} \wedge M)}(\varepsilon z),\]
    $0 \le \hat{\mu}^{\varepsilon}_{M} \le \mu^{\varepsilon}_{M}$ by the maximum principle 
    (see \cite[Figure 4]{cioranescu1997strange}). Recall also that, given $0 < r < R < \infty$, the capacitary potential $e$ of $B_{r}(0)$ in $U_{R}(0)$ admits the explicit formula
    \begin{equation*}
        e(x) = 
        \begin{cases*}
            1 & \text{ if $|x| \le r$}\\
            \frac{\left( \frac{1}{|x|} \right)^{d - 2} - \left( \frac{1}{R} \right)^{d - 2}}{\left( \frac{1}{r} \right)^{d - 2} - \left( \frac{1}{R} \right)^{d - 2}} & \text{ if $r \le |x| \le R$}\\
            0 & \text{ if $|x| \ge R$}.
        \end{cases*}
    \end{equation*}
    Hence, by \eqref{def: cap pot of truncated ball in Uezr} and \eqref{def: mueM}, we have
    \begin{equation}
        \label{eq: explicit formula for mueM}
        \begin{split}
            \mu^{\varepsilon}_{M} = \sum_{z \in I^{\varepsilon}_{g, M}}&\left( 1 - \left(\frac{\varepsilon^{\frac{d}{d - 2}}(\rho_{z} \wedge M)}{d^{\varepsilon}_{z}}\right)^{d - 2} \right)^{-1}
            \left( \frac{\varepsilon^{\frac{d}{d - 2}}(\rho_{z} \wedge M)}{d^{\varepsilon}_{z}} \right)^{d - 2}\frac{d - 2}{d^{\varepsilon}_{z}}\delta_{\partial U^{\varepsilon}_{z}}.
        \end{split}
    \end{equation}
    Therefore, 
    \begin{equation}
        0 \le \hat{\mu}^{\varepsilon}_{M} \le \mu^{\varepsilon}_{M} \le (d - 2)(1 - \varepsilon^{\frac{2}{d - 2}}M^{2})M^{d - 2}\varepsilon^{d}\sum_{z \in I^{\varepsilon}_{g, M}}\frac{\delta_{\partial U^{\varepsilon}_{z}}}{(d^{\varepsilon}_{z})^{d - 1}}
    \end{equation}
    whenever $\varepsilon^{\frac{2}{d - 2}}M^{2} \le 1$. Note that the right-hand side of the last display converges strongly in $H^{-1}(\R^{d})$ as $\varepsilon \searrow 0$ by Lemma \ref{thm: giunti 5_3}. Hence, by \cite[Lemma 2.8]{cioranescu1997strange} and the fact that $\mathrm{spt}\hat{\mu}^{\varepsilon}_{M} \sub \{d(\cdot, W) < 1\}$, it holds that with probability one, every subsequence of $(\hat{\mu}^{\varepsilon}_{M})_{\varepsilon \in ]0, 1]}$ is strongly relatively compact in $H^{-1}(\R^{d})$. From \eqref{eq: egM, eg to 0 in weak H1}, the compactness of $(\hat{\mu}^{\varepsilon}_{M})$, and Lemma \ref{thm: weak convergence of the modulus of grad egM} below, \eqref{eq: mueM converges} follows(cf. \cite[Proposition 1.1]{cioranescu1997strange}). Indeed, fix $M > 1$ and a realization $\omega$ such that $(\hat{\mu}^{\varepsilon}_{M}(\omega))_{\varepsilon \in ]0, 1]}$ is compact in $H^{-1}(\R^{d})$ and the convergences \eqref{eq: egM, eg to 0 in weak H1} and \eqref{eq: weak convergence of the modulus of grad egM} take place. Let a subsequence $(\varepsilon')$ and $\hat{\mu}_{M} \in H^{-1}(\R^{d})$ such that $\hat{\mu}^{\varepsilon'}_{M} \to \hat{\mu}_{M}$ in $H^{-1}(\R^{d})$. Then for every $\varphi \in C_{c}^{\infty}(\R^{d})$, 
    \begin{equation}
        \begin{split}
            &\int_{\R^{d}}|\nabla \hat{e}^{\varepsilon'}_{g, M}|^{2}\varphi\\
            &= \int_{\R^{d}}\nabla \hat{e}^{\varepsilon'}_{g, M}\cdot \nabla (\varphi (\hat{e}^{\varepsilon'}_{g, M} - 1)) + \int_{\R^{d}}\nabla \hat{e}^{\varepsilon'}_{g, M} \cdot \nabla \varphi - \int_{\R^{d}} \hat{e}^{\varepsilon'}_{g, M} \nabla \hat{e}^{\varepsilon'}_{g, M} \cdot \nabla \varphi\\
            &= \inn{-\hat{\mu}^{\varepsilon'}_{M}}{\varphi(\hat{e}^{\varepsilon'}_{g, M} - 1)}_{H^{-1}(\R^{d}), H^{1}(\R^{d})} + \int_{\R^{d}}\nabla \hat{e}^{\varepsilon'}_{g, M} \cdot \nabla \varphi - \int_{\R^{d}} \hat{e}^{\varepsilon'}_{g, M} \nabla \hat{e}^{\varepsilon'}_{g, M} \cdot \nabla \varphi\\
            &\to \inn{\hat{\mu}_{M}}{\varphi}_{H^{-1}(\R^{d}), H^{1}(\R^{d})}
        \end{split}
    \end{equation}
    by \eqref{eq: egM, eg to 0 in weak H1}. Hence, the convergence \eqref{eq: mueM converges} holds for this $\omega$.

    To show \eqref{eq: lim mueM to mu}, note first that
    \[\int_{\R_{\ge 0}^{2}} k \,\psi_{\frac{2}{M}}[N](dk, d\rho) \lesssim \int_{\R_{\ge 0}}\rho^{d - 2}\,\psi_{\frac{2}{M}}[\Xi](d\rho),\]
    since marks of $Z^{\varepsilon}$ belong to $\mathcal{Q}$. Thus, 
    \[\int_{\R_{\ge 0}^{2}}\left( \frac{M}{\rho \vee M} \right)^{d - 2}k \,\psi_{\frac{2}{M}}[N](dk, d\rho) \lesssim \int_{\R_{\ge 0}}\rho^{d - 2}\,\psi_{\frac{2}{M}}[\Xi](d\rho) \to 0\]
     a.s. as $M \to \infty$ by Lemma \ref{thm: continuity of thinning}. By the last estimate and the monotone convergence theorem, we have 
     \begin{equation*}
        \begin{split}
            &\int_{\R_{\ge 0}^{2}}\left( \frac{M}{\rho \vee M} \right)^{d - 2}k \,\psi^{\frac{2}{M}}[N](dk, d\rho)\\
            &= \int_{\R_{\ge 0}^{2}}\left( \frac{M}{\rho \vee M} \right)^{d - 2}k \,\psi[N](dk, d\rho) - \int_{\R_{\ge 0}^{2}}\left( \frac{M}{\rho \vee M} \right)^{d - 2}k \,\psi_{\frac{2}{M}}[N](dk, d\rho)\\
            &\to \int_{\R_{\ge 0}^{2}}k \,\psi[N](dk, d\rho) \text{ a.s.}
        \end{split}
     \end{equation*}
    The verification of \eqref{eq: nabla eg - egM} is an easy adaptation of the arguments in \cite{giunti2018homogenization} used to show an analogous estimate (the equation (4.70) in \cite{giunti2018homogenization}). For the sake of completeness, we give the proof here.
    
    Calculate 
    \begin{equation*}
        \begin{split}
            \n{\nabla \hat{e}^{\varepsilon}_{g} - \nabla \hat{e}^{\varepsilon}_{g, M}}_{L^{2}(W)}^{2} &\le \sum_{z \in I^{\varepsilon}_{g, M}}\int_{U^{\varepsilon}_{z}}|\nabla \hat{e}^{\varepsilon}_{z} - \hat{e}^{\varepsilon}_{z, M}|^{2} 
            + \sum_{z \in I^{\varepsilon}_{g} \setminus I^{\varepsilon}_{g, M}}\capa{\varepsilon z + \varepsilon^{\frac{d}{d - 2}}K^{\varepsilon}_{z}}{U^{\varepsilon}_{z}}\\
            &\lesssim \left(\sum_{\substack{z \in I^{\varepsilon}_{g, M},\\ \rho_{z} \ge M}} + \sum_{z \in I^{\varepsilon}_{g} \setminus I^{\varepsilon}_{g, M}}\right)\capa{\varepsilon z + \varepsilon^{\frac{d}{d - 2}}K^{\varepsilon}_{z}}{U^{\varepsilon}_{z}}\\
            &\le \left(\sum_{\substack{z \in I^{\varepsilon}_{g, M},\\ \rho_{z} \ge M}} + \sum_{z \in I^{\varepsilon}_{g} \setminus I^{\varepsilon}_{g, M}}\right)\capa{B_{\varepsilon^{\frac{d}{d - 2}}\rho_{z}}(\varepsilon z)}{U^{\varepsilon}_{z}}\\
            &\underset{\eqref{eq: evaluate dezr from below}}{\lesssim} \varepsilon^{d}\left(\sum_{\substack{z \in I^{\varepsilon}_{g, M},\\ \rho_{z} \ge M}} + \sum_{z \in I^{\varepsilon}_{g} \setminus I^{\varepsilon}_{g, M}}\right)\rho_{z}^{d - 2}\\
            &\lesssim \varepsilon^{d}\left( \sum_{\substack{ z \in \Phi \cap \varepsilon^{-1} W,\\ \rho_{z} \ge M}} + \sum_{z \in \mathcal{T}_{\frac{2}{M}}(\Phi) \cap \varepsilon^{-1} W} + \sum_{z \in \mathcal{T}^{\frac{2}{M}}(\Phi)\cap \varepsilon^{-1}W \setminus I^{\varepsilon}_{g, M}} \right)\rho_{z}^{d - 2}.
        \end{split}
    \end{equation*}
    Hence, 
    \begin{equation}
        \label{eq: limsup eg - egM to 0}
        \begin{split}
           &\limsup_{\varepsilon}\n{\nabla \hat{e}^{\varepsilon}_{g} - \nabla \hat{e}^{\varepsilon}_{g, M}}_{L^{2}(W)}^{2}\\
        &\lesssim |W|\left(\int_{\R_{\ge 0}^{2}}\rho^{d - 2}\chi_{\rho \ge M}\,\psi[N](dk, d\rho) + \int_{\R_{\ge 0}^{2}}\rho^{d - 2}\,\psi_{\frac{2}{M}}[N](dk, d\rho)\right)\\
        &\underset{M \to \infty}{\to} 0 \text{ a.s.},
        \end{split}
    \end{equation}
    where the first inequality follows from \eqref{eq: IegM to N2/M asymptotically}, \text{Lemma \ref{thm: averaging h over A}, and Lemma \ref{thm: giunti lem5_2}}, and the second from Lemma \ref{thm: continuity of thinning}.
\end{proof}

\begin{lemma}
    \label{thm: weak convergence of the modulus of grad egM}
    For every $M > 1$, a.s.,
    \begin{equation}
        \label{eq: weak convergence of the modulus of grad egM}
        |\nabla\hat{e}^{\varepsilon}_{g, M}|^{2}\,dx \to \chi_{W}\int_{\R_{\ge 0}^{2}}\left( \frac{M}{\rho \vee M} \right)^{d - 2}k \,\psi^{\frac{2}{M}}[N](dk, d\rho)
    \end{equation}
    weakly on $\R^{d}$.
    \end{lemma}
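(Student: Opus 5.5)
The plan is to test the measures $|\nabla\hat{e}^{\varepsilon}_{g, M}|^{2}\,dx$ against $\varphi\in C_c(\R^d)$ and to reduce the resulting sum over holes to an ergodic average of capacities. Write the mass of each piece as
\[
m^{\varepsilon}_z:=\int|\nabla\hat e^\varepsilon_{z,M}|^2=\capa{\varepsilon z+\tfrac{M}{\rho_z\vee M}\varepsilon^{\frac d{d-2}}K^\varepsilon_z}{U^\varepsilon_z}.
\]
The pieces $\hat{e}^{\varepsilon}_{z, M}$ have disjoint supports $U^{\varepsilon}_{z}\subset U_{\varepsilon}(\varepsilon z)$, so
\[
\int\varphi|\nabla\hat e^\varepsilon_{g,M}|^2=\sum_{z\in I^\varepsilon_{g,M}}\varphi(\varepsilon z)\,m^{\varepsilon}_z+O\Big(\omega_\varphi(\varepsilon)\sum_{z} m^{\varepsilon}_z\Big),
\]
where $\omega_\varphi$ is the modulus of continuity of $\varphi$. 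Using $m^\varepsilon_z\lesssim\varepsilon^d\rho_z^{d-2}$ (via \eqref{eq: evaluate dezr from below}) together with Lemma \ref{thm: averaging h over A}, the error term vanishes a.s. It therefore suffices to identify the limit of $\sum_{z\in I^\varepsilon_{g,M}}\varphi(\varepsilon z)m^\varepsilon_z$.

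The key step is to bound each $m^\varepsilon_z$ between explicit multiples of $\mathrm{Cap}K^\varepsilon_z=k_z$. Put $s=\frac{M}{\rho_z\vee M}\varepsilon^{\frac d{d-2}}$; the hole is contained in $B_{s\rho_z}(\varepsilon z)$ with $s\rho_z\le M\varepsilon^{\frac d{d-2}}$, and for $z\in I^\varepsilon_{g,M}$ we have $d^\varepsilon_z\ge\varepsilon/M$. The lower bound follows from monotonicity of capacity in the domain, $m^\varepsilon_z\ge s^{d-2}k_z$. The upper bound comes from Lemma \ref{thm: Maz'ya lemma} with $r=M\varepsilon^{\frac d{d-2}}$ and $R=\varepsilon/M$, giving $m^\varepsilon_z\le(1+f_d(\varepsilon^{-\frac2{d-2}}M^{-2}))s^{d-2}k_z$. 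Since $f_d(\lambda)\to0$ as $\lambda\to\infty$, this yields uniformly in $z$
\[
m^\varepsilon_z=(1+o(1))\,\varepsilon^d\Big(\frac{M}{\rho_z\vee M}\Big)^{d-2}k_z .
\]
This is exactly where the explicit ball formulas of \cite{giunti2018homogenization} are replaced.

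It remains to show that a.s.
\[
\varepsilon^d\sum_{z\in I^\varepsilon_{g,M}}\varphi(\varepsilon z)\Big(\frac{M}{\rho_z\vee M}\Big)^{d-2}k_z\to\int_W\varphi\,dx\int\Big(\frac{M}{\rho\vee M}\Big)^{d-2}k\,\psi^{\frac2M}[N](dk,d\rho).
\]
I would first replace $I^\varepsilon_{g,M}$ by $\mathcal{T}^{\frac2M}(\Phi)\cap\varepsilon^{-1}W$. The symmetric difference consists of points of $I^\varepsilon_b$ and of the set in \eqref{eq: IegM to N2/M asymptotically}. The inclusion $I^\varepsilon_{g,M}\subset\mathcal T^{2/M}(\Phi)$ holds once $\varepsilon$ is small, since $d^\varepsilon_z\ge\varepsilon/M$ forces nearest-neighbour distance at least $2/M$ in unscaled coordinates. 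The weight $(\frac{M}{\rho\vee M})^{d-2}k$ is bounded by $\lesssim M^{d-2}$ because $k\le\mathrm{Cap}B_{\rho}$. Hence the discrepancy is $\lesssim M^{d-2}\|\varphi\|_\infty\varepsilon^d\#(\text{difference})\to0$ by \eqref{eq: e^d Ieb to 0} and \eqref{eq: IegM to N2/M asymptotically}. The remaining sum is a spatial average of a bounded mark function of the stationary MPP $\mathcal{T}^{\frac2M}(N)$ against $\varphi\chi_W$. I would approximate $\varphi\chi_W$ by step functions on finitely many cubes, using $|\partial W|=0$ and the uniform continuity of $\varphi$, and then apply Lemma \ref{thm: averaging h over A} on each cube, which gives the limit with $\psi^{\frac2M}[N]$. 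Finally, a countable dense family of $\varphi$ yields a single full-measure event.

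I expect the main obstacle to be the measurability and uniformity bookkeeping in this last step. Lemma \ref{thm: averaging h over A} must be applicable to the thinned process with mark function $(k,\rho)\mapsto(\frac{M}{\rho\vee M})^{d-2}k$ on arbitrary (shrinking-scale) cubes, simultaneously for all test sets. The Maz'ya comparison, by contrast, should be routine once the radii are arranged as above.
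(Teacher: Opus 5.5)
Your proof is correct. It uses exactly the paper's two per-hole capacity estimates: the lower bound $\mathrm{Cap}(F,U^{\varepsilon}_{z})\ge\mathrm{Cap}F$, and the upper bound from Lemma \ref{thm: Maz'ya lemma} with $r=M\varepsilon^{\frac{d}{d-2}}$ and $R=\varepsilon/M$. What differs is how you assemble them into weak convergence.

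The paper uses the lower bound only in localized form. It proves $\liminf_{\varepsilon}\int_{A'}|\nabla\hat{e}^{\varepsilon}_{g,M}|^{2}\ge|W\cap A'|\int(\frac{M}{\rho\vee M})^{d-2}k\,\psi^{\frac{2}{M}}[N](dk,d\rho)$ for sets with $|W\cap\partial A'|=0$. It uses the Maz'ya bound only for the total mass. It then concludes by the standard criterion \emph{liminf on sets plus convergence of total mass implies weak convergence}. The price is a boundary-layer count of holes with $d(\varepsilon z,\R^{d}\setminus A)\le\varepsilon$, handled via $|\partial A|=0$ and Lemma \ref{thm: averaging h over A}.

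You instead merge both bounds into the uniform asymptotics $m^{\varepsilon}_{z}=(1+o(1))\varepsilon^{d}(\frac{M}{\rho_{z}\vee M})^{d-2}k_{z}$ and test against continuous $\varphi$. The boundary layer is thereby replaced by a trivial modulus-of-continuity error, and no portmanteau step is needed.

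The obstacle you anticipate at the end is not really there. Apply \eqref{eq: avg Vh over A} with $B=W$ and $V=\varphi|_{W}$ to the stationary process $\mathcal{T}^{\frac{2}{M}}(N)$, which is a subprocess of $N$ and so has locally finite intensity. Use the mark function $(k,\rho)\mapsto(\frac{M}{\rho\vee M})^{d-2}k$, which is $\lesssim M^{d-2}$ on the marks. This gives the limit directly, with no cube approximation, and nothing happens at shrinking scales.

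One minor point: $I^{\varepsilon}_{g,M}\subset\mathcal{T}^{\frac{2}{M}}(\Phi)\cap\varepsilon^{-1}W$ holds for every $\varepsilon$, not just small ones, because $d^{\varepsilon}_{z}\ge\varepsilon/M$ forces nearest-neighbour distance at least $2/M$. So the discrepancy is exactly the set in \eqref{eq: IegM to N2/M asymptotically}, and $I^{\varepsilon}_{b}$ need not be listed separately.
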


    \begin{proof}
        Let $A' \in \mathcal{B}(\R^{d})$ such that $|W \cap \partial A'| = 0$ and set $A := A' \cap W$. Then
        \begin{equation}
            \label{eq: evaluate the energy of hat egM on U from below}
            \begin{split}
                \int_{A'}|\nabla\hat{e}^{\varepsilon}_{g, M}|^{2} &\ge \int_{A}\sum_{\substack{z \in I^{\varepsilon}_{g, M},\\ \varepsilon z \in \{d(\cdot, \R^{d} \setminus A) > \varepsilon\}}}|\nabla \hat{e}^{\varepsilon}_{z, M}|^{2}\\
                &= \sum_{\substack{z \in I^{\varepsilon}_{g, M},\\ \varepsilon z \in \{d(\cdot, \R^{d} \setminus A) > \varepsilon\}}}\capa{\varepsilon z + \frac{M}{\rho_{z} \vee M}\varepsilon^{\frac{d}{d - 2}}K^{\varepsilon}_{z}}{U^{\varepsilon}_{z}}\\
                &\ge \varepsilon^{d}\sum_{\substack{z \in I^{\varepsilon}_{g, M},\\ \varepsilon z \in \{d(\cdot, \R^{d} \setminus A) > \varepsilon\}}}\left( \frac{M}{\rho_{z} \vee M} \right)^{d - 2}k_{z}.
            \end{split}
        \end{equation}
        Further, similarly to \eqref{eq: evaluate the difference between Ieg and N2/M}, we have
        \begin{equation}
            \begin{split}
                &\left\{ z \in \mathcal{T}^{\frac{2}{M}}(\Phi) \cap \varepsilon^{-1}A: z \notin I^{\varepsilon}_{g, M} \text{ or } d(\varepsilon z, \R^{d} \setminus A) \le \varepsilon \right\}\\
                &\sub \left\{ z \in \mathcal{T}^{\frac{2}{M}}(\Phi)\cap \varepsilon^{-1}W, d(\varepsilon z, D^{\varepsilon}_{b}) < \frac{\varepsilon}{M} \right\} \cup I^{\varepsilon}_{b}\\
                &\cup \left\{ z \in \Phi: \varepsilon z \in A \cap \{d(\cdot, \R^{d} \setminus A) \le \varepsilon\} \right\}.
            \end{split}
        \end{equation}
        By the last inclusion, \eqref{eq: points in thinned processs near Deb vanish asymptotically}, \eqref{eq: e^d Ieb to 0}, and the evaluation
        \begin{equation}
            \begin{split}
                &\varepsilon^{d}\#\left\{ z \in \Phi: \varepsilon z \in A \cap \{d(\cdot, \R^{d} \setminus A) \le \varepsilon\} \right\}\\
                &\le \liminf_{\delta \searrow 0}\limsup_{\varepsilon \searrow 0}\varepsilon^{d}\#\left\{ z \in \Phi: \varepsilon z \in \overline{A} \cap \{d(\cdot, \R^{d} \setminus A) \le \delta\} \right\}\\
                &\underset{\text{Lemma \ref{thm: averaging h over A}}}{\le} \liminf_{\delta \searrow 0}\left| \overline{A} \cap \{d(\cdot, \R^{d} \setminus A) \le \delta\} \right| \psi[N](\R_{\ge 0}^{2})\\
                &= |\partial A| \psi[N](\R_{\ge 0}^{2})\\
                &= 0 \text{ a.s.,}
            \end{split}
        \end{equation}
        we see that 
        \begin{equation}
            \varepsilon^{d}\#\left\{ z \in \mathcal{T}^{\frac{2}{M}}(\Phi) \cap \varepsilon^{-1}A: z \notin I^{\varepsilon}_{g, M} \text{ or } d(\varepsilon z, \R^{d} \setminus A) \le \varepsilon \right\} \to 0 \text{ a.s.}
        \end{equation}
        Therefore, by \eqref{eq: evaluate the energy of hat egM on U from below}, Lemma \ref{thm: giunti lem5_2}, and Lemma \ref{thm: averaging h over A},
        \begin{equation}
            \label{eq: liminf egM ge lim}
            \liminf_{\varepsilon}\int_{A'}|\nabla \hat{e}^{\varepsilon}_{g, M}|^{2} \ge |W \cap A'|\int_{\R_{\ge 0}^{2}}\left( \frac{M}{\rho \vee M} \right)^{d - 2}k\,\psi^{\frac{2}{M}}[N](dk, d\rho)
        \end{equation}
        a.s.

        Furthermore, 
        \begin{equation}
            \label{eq: evaluate the energy of egM on D from above}
            \begin{split}
                \int_{\R^{d}}|\nabla \hat{e}^{\varepsilon}_{g, M}|^{2} &\le \sum_{z \in I^{\varepsilon}_{g, M}}\capa{\varepsilon z + \frac{M}{\rho_{z} \vee M}\varepsilon^{\frac{d}{d - 2}}K^{\varepsilon}_{z}}{U^{\varepsilon}_{z}}\\
                &\le \sum_{z \in I^{\varepsilon}_{g, M}}\left( 1 + f_{d}\left( \frac{d^{\varepsilon}_{z}}{\varepsilon^{\frac{d}{d - 2}}M} \right) \right)\mathrm{Cap}\left( \frac{M}{\rho_{z} \vee M}\varepsilon^{\frac{d}{d - 2}}K^{\varepsilon}_{z} \right)\\
                &\le (1 + f_{d}(M^{-2}\varepsilon^{-\frac{2}{d - 2}}))\varepsilon^{d}\sum_{z \in \mathcal{T}^{\frac{2}{M}}(\Phi) \cap \varepsilon^{-1}W}\left( \frac{M}{\rho_{z} \vee M} \right)^{d - 2}k_{z},
            \end{split}
        \end{equation}
        where the second inequality follows from Lemma \ref{thm: Maz'ya lemma} and the fact that $\varepsilon z + \frac{M}{\rho_{z} \vee M}\varepsilon^{\frac{d}{d - 2}}K^{\varepsilon}_{z}$ is contained in a closed ball of radius $\varepsilon^{\frac{d}{d - 2}}M$, and the third inequality from the monotonicity of $f_{d}$ and the definition \eqref{def: IegM} of $I^{\varepsilon}_{g, M}$. Since $f_{d}(x) \to 0$ as $x \to +\infty$, by Lemma \ref{thm: averaging h over A} and \eqref{eq: evaluate the energy of egM on D from above}, we have, a.s., 
        \begin{equation}
            \label{eq: limsup egM le lim}
            \limsup_{\varepsilon \searrow 0}\int_{\R^{d}}|\nabla \hat{e}^{\varepsilon}_{g, M}|^{2} \le |W|\int_{\R_{\ge 0}^{2}}\left( \frac{M}{\rho \vee M} \right)^{d - 2}k \,\psi^{\frac{2}{M}}[N](dk, d\rho).
        \end{equation}
        From \eqref{eq: liminf egM ge lim} and \eqref{eq: limsup egM le lim}, \eqref{eq: weak convergence of the modulus of grad egM} follows.
    \end{proof}

\section{Corrector results}
\label{sect: corrector}
As Cioranescu and Murat have shown in \cite[Section 3]{cioranescu1997strange}, the functions $\hat{e}^{\varepsilon}$ constructed in Lemma \ref{thm: construction of osc fct} serve as correctors.

\begin{proposition}
    \label{thm: corrector}
    In the same setting as Lemma \ref{thm: construction of osc fct}, we further assume that there exists $p > d$ such that $u \in W^{1, p}(D)$ a.s. Then a.s.,
    \begin{equation}
        \label{eq: corrector}
        \n{u^{\varepsilon} - (u - \hat{e}^{\varepsilon}u)}_{H^{1}_{0}(D)} \to 0.
    \end{equation}
    If $\E[C_{0}] < \infty$ and $b := \n{\n{u}_{L^{\infty}(D)}}_{L^{\infty}(\pr)} < \infty$, then the convergence \eqref{eq: corrector} also takes place in $L^{2}(\pr)$.
\end{proposition}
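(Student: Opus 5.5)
Following Cioranescu--Murat \cite[Section 3]{cioranescu1997strange}, I would put $w^{\varepsilon} := 1 - \hat{e}^{\varepsilon}$ and $r^{\varepsilon} := u^{\varepsilon} - w^{\varepsilon}u$, and expand the energy norm $\int_{D}|\nabla r^{\varepsilon}|^{2}$. First, $w^{\varepsilon}u \in H^{1}_{0}(D^{\varepsilon})$: since $\hat{e}^{\varepsilon} = 1$ on $H^{\varepsilon}$ in the sense of \eqref{eq: ee1} and $u \in W^{1,p}(D) \subset L^{\infty}(D)$ for $p > d$, the product lies in $H^{1}_{0}(D^{\varepsilon})$. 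Hence $r^{\varepsilon} \in H^{1}_{0}(D^{\varepsilon})$ and we may write
\[
\int_{D}|\nabla r^{\varepsilon}|^{2} = \int_{D}|\nabla u^{\varepsilon}|^{2} - 2\int_{D}\nabla u^{\varepsilon}\cdot\nabla(w^{\varepsilon}u) + \int_{D}|\nabla(w^{\varepsilon}u)|^{2}.
\]
We then compute the limit of each term.

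\emph{First term.} Test \eqref{eq: Poisson in perforated domains} with $u^{\varepsilon}$ itself. This gives $\int|\nabla u^{\varepsilon}|^{2} = \inn{f^{\varepsilon}}{u^{\varepsilon}} \to \inn{f}{u} = \int|\nabla u|^{2} + C_{0}\int u^{2}$, using Theorem \ref{thm: main thm}.

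\emph{Second term.} Test the equation with $w^{\varepsilon}u \in H^{1}_{0}(D^{\varepsilon})$. This gives $\inn{f^{\varepsilon}}{w^{\varepsilon}u}$, which tends to $\inn{f}{u}$ because $w^{\varepsilon}u \rightharpoonup u$ weakly in $H^{1}_{0}(D)$. That weak convergence follows from \eqref{eq: H3 for e} together with $u \in W^{1,p}$: the product $\hat{e}^{\varepsilon}\nabla u$ tends to $0$ in $L^{2}$ because $\hat e^\varepsilon\to0$ strongly in $L^{2}_{loc}$, $0\le \hat e^\varepsilon\le 1$ and $\nabla u\in L^2$.

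\emph{Third term.} Expand it as
\[
\int(w^{\varepsilon})^{2}|\nabla u|^{2} - 2\int w^{\varepsilon}u\nabla \hat{e}^{\varepsilon}\cdot\nabla u + \int u^{2}|\nabla \hat{e}^{\varepsilon}|^{2}.
\]
The first piece tends to $\int|\nabla u|^{2}$ by dominated-type arguments. The middle piece tends to $0$ because $\nabla\hat e^\varepsilon \rightharpoonup 0$ weakly in $L^{2}$ while $w^{\varepsilon}u\nabla u \to u\nabla u$ strongly in $L^{2}$. For this strong convergence we need $\hat e^\varepsilon \to 0$ in $L^{q}$ for every $q<\infty$ (bounded and $L^2$-convergent) combined with $\nabla u \in L^{p}$, $p>2$.

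The key piece is $\int u^{2}|\nabla\hat{e}^{\varepsilon}|^{2} \to C_{0}\int u^{2}$. Rather than proving weak-$*$ convergence of $|\nabla\hat{e}^{\varepsilon}|^{2}$ directly, I would write
\[
\int u^{2}|\nabla\hat e^{\varepsilon}|^{2} = \int\nabla\hat e^{\varepsilon}\cdot\nabla(u^{2}\hat e^{\varepsilon}) - 2\int u\hat e^{\varepsilon}\nabla\hat e^{\varepsilon}\cdot\nabla u.
\]
The last integral vanishes in the limit as before. For the first integral, note that $\int\nabla\hat e^\varepsilon\cdot\nabla(u^2\hat e^\varepsilon) = \int\nabla\hat e^\varepsilon\cdot\nabla(u^2) - \int \nabla\hat e^\varepsilon\cdot\nabla(u^2 w^\varepsilon)$. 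The first of these tends to $0$ by \eqref{eq: H3 for e}. The second equals $\inn{\Delta\hat e^{\varepsilon}}{u^{2}w^{\varepsilon}}$, and we apply \eqref{eq: H5 for e} with $v^{\varepsilon} := u^{2}w^{\varepsilon} \in H^{1}_{0}(W\setminus H^{\varepsilon})$. Here $u^{2}\in W^{1,p}\cap H^{1}_{0}(D)$ because $u$ is bounded, and $v^{\varepsilon}\rightharpoonup u^{2}$ weakly in $H^{1}_{0}(W)$. We obtain $-\int\nabla \hat e^\varepsilon\cdot\nabla v^\varepsilon = \inn{\Delta\hat e^{\varepsilon}}{v^{\varepsilon}} \to C_{0}\int u^{2}$. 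I would double-check the sign convention in \eqref{eq: H5 for e}; it is consistent with how it is used in the proof of Theorem \ref{thm: main thm}, where $\int\nabla(\varphi u^\varepsilon)\cdot\nabla w^\varepsilon$ yields $+C_0\int u\varphi$.

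Collecting the terms, the energy tends to
\[
\Bigl(\int|\nabla u|^{2}+C_{0}\!\int u^{2}\Bigr) - 2\Bigl(\int|\nabla u|^{2}+C_{0}\!\int u^{2}\Bigr) + \Bigl(\int|\nabla u|^{2}+C_{0}\!\int u^{2}\Bigr) = 0
\]
a.s. The main obstacle is the bookkeeping of the products with $u$, $u^2$; the regularity $p>d$ is exactly what makes those products admissible test functions.

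For $L^{2}(\pr)$ convergence, I would use dominated convergence. We have $\n{r^{\varepsilon}}_{H^1}^2 \lesssim \n{u^{\varepsilon}}^{2} + \n{w^{\varepsilon}u}^{2}$. The first part is bounded by $C\n{f^{\varepsilon}}_{H^{-1}}^{2}$, which is deterministic and bounded. For the second, $\n{\nabla(w^{\varepsilon}u)}^{2}\lesssim \n{\nabla u}^{2} + b^{2}\n{\nabla\hat e^{\varepsilon}}^{2}$. By the energy bounds in the proof of Lemma \ref{thm: construction of osc fct}, $\n{\nabla\hat e^{\varepsilon}}^{2} \le \mathrm{Cap}(\hat H^\varepsilon_b, D^\varepsilon_b) + \int|\nabla e^{\varepsilon}_{g}|^{2}$. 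This is dominated by $\varepsilon^{d}\sum_{z\in\Phi\cap\varepsilon^{-1}W}\rho_{z}^{d-2}$, whose expectation is bounded uniformly in $\varepsilon$ by the definition of $\psi$. Uniform integrability rather than pointwise domination is what is really needed here. I expect this to be delicate: the hypothesis stated is $\E[C_0]<\infty$, so I would first try to bound $\n{\nabla\hat e^{\varepsilon}}^2$ by capacity sums $\varepsilon^d\sum k_z$ using Lemma \ref{thm: Maz'ya lemma}. Then I would use the $L^{1}$ ergodic theorem (convergence in $L^{1}(\pr)$ of such spatial averages to $|W|C_{0}$) to get uniform integrability and conclude via Vitali.
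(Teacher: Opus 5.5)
Your proof is correct. Its framework is the paper's: the Cioranescu--Murat energy expansion with \eqref{eq: H3 for e} and \eqref{eq: H5 for e} for the a.s.\ limit, then a capacity bound and uniform integrability for $L^{2}(\pr)$. You do, however, organize the a.s.\ computation differently.

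The paper keeps the cross terms $-2\int_{D}w^{\varepsilon}\nabla u^{\varepsilon}\cdot\nabla u$ and $-2\int_{D}u\nabla u^{\varepsilon}\cdot\nabla w^{\varepsilon}$ separate. It integrates the second by parts into $2\inn{\Delta w^{\varepsilon}}{uu^{\varepsilon}} + 2\int_{D}u^{\varepsilon}\nabla u\cdot\nabla w^{\varepsilon}$. So it applies \eqref{eq: H5 for e} a second time, with $v^{\varepsilon} = uu^{\varepsilon}$, and it needs $u^{\varepsilon}\nabla u \to u\nabla u$ strongly in $L^{2}$. Rellich gives $u^{\varepsilon}\to u$ only in $L^{q}$ for $q<2^{*}$, so this is exactly where $\nabla u\in L^{p}$ with $p>d$ enters the paper's argument.

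You instead observe that the two cross terms sum to $-2\int_{D}\nabla u^{\varepsilon}\cdot\nabla(w^{\varepsilon}u)$. Since $w^{\varepsilon}u\in H^{1}_{0}(D^{\varepsilon})$, you replace this by $-2\inn{f^{\varepsilon}}{w^{\varepsilon}u}$ directly from the equation. This is the same admissibility the paper uses for $\varphi w^{\varepsilon}$ in the proof of Theorem \ref{thm: main thm}. As a result, $u^{\varepsilon}$ only ever appears paired with $f^{\varepsilon}$, and \eqref{eq: H5 for e} is needed only for $v^{\varepsilon} = u^{2}w^{\varepsilon}$. Every step then goes through for $u\in H^{1}_{0}(D)\cap L^{\infty}(D)$. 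Your route is slightly shorter and shows that the $W^{1,p}$ hypothesis can be relaxed to boundedness of $u$; in your argument $p>d$ serves only to make $u$ bounded.

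For the $L^{2}(\pr)$ statement, discard your first idea. The quantity $\E[\varepsilon^{d}\sum_{z\in\Phi\cap\varepsilon^{-1}W}\rho_{z}^{d-2}] = |W|\,\E\int\rho^{d-2}\,d\psi[N]$ need not be finite, because \eqref{eq: rho has finite d -2 moment} is only an a.s.\ assumption. Your fallback is exactly the paper's argument:
\begin{itemize}
\item Apply Lemma \ref{thm: Maz'ya lemma} with radius ratio at least $2$. On the good part this comes from \eqref{eq: evaluate dezr from below}; on the bad part from $U_{2\varepsilon^{d/(d-2)}\rho_{z}}(\varepsilon z)\sub D^{\varepsilon}_{b}$ together with subadditivity.
\item This gives $\n{\nabla\hat{e}^{\varepsilon}}_{L^{2}}^{2}\lesssim\varepsilon^{d}\sum_{z\in\Phi\cap\varepsilon^{-1}W}k_{z}$.
\item By Lemma \ref{thm: averaging h over A}, the right-hand side converges in $L^{1}(\pr)$ when $\E[C_{0}]<\infty$.
\item Along every sequence $\varepsilon_{n}\searrow 0$, the squared error is therefore dominated by a uniformly integrable family, and Vitali's theorem combined with the a.s.\ convergence concludes.
\end{itemize}
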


\begin{proof}
Put $w^{\varepsilon} := 1 - \hat{e}^{\varepsilon}$. To prove the a.s. convergence, it suffices to employ the method developed in \cite[Section 3]{cioranescu1997strange} for $\pr$-a.e. realization $\omega \in \Omega$: calculate
\begin{equation*}
    \begin{split}
        \int_{D}|\nabla (u^{\varepsilon} - w^{\varepsilon}u)|^{2} &= \int_{D}|\nabla u^{\varepsilon}|^{2} + \int_{D}|w^{\varepsilon}|^{2}|\nabla u|^{2} + \int_{D}|u|^{2}|\nabla w^{\varepsilon}|^{2}\\
        &- 2\int_{D}w^{\varepsilon}\nabla u^{\varepsilon} \cdot \nabla u - 2\int_{D}u\nabla u^{\varepsilon} \cdot \nabla w^{\varepsilon} + 2\int_{D}w^{\varepsilon}u\nabla w^{\varepsilon} \cdot \nabla u\\
        &= \inn{f^{\varepsilon}}{u^{\varepsilon}}_{H^{-1}, H^{1}_{0}} + \int_{D}|w^{\varepsilon}|^{2}|\nabla u|^{2} - \int_{D}w^{\varepsilon}\nabla (|u|^{2}) \cdot \nabla w^{\varepsilon} - \inn{\Delta w^{\varepsilon}}{|u|^{2}w^{\varepsilon}}_{H^{-1}, H^{1}_{0}}\\
        &-2\int_{D}w^{\varepsilon}\nabla u^{\varepsilon} \cdot \nabla u + 2\int_{D}u^{\varepsilon}\nabla u \cdot \nabla w^{\varepsilon} + 2\inn{\Delta w^{\varepsilon}}{uu^{\varepsilon}}_{H^{-1}, H^{1}_{0}}\\
        &+ 2\int_{D}w^{\varepsilon}u\nabla w^{\varepsilon} \cdot \nabla u
    \end{split}
\end{equation*}
and note that a.s., we may calculate the limit as $\varepsilon \searrow 0$ of every term in the right-hand side of the last display, which allows us to conclude that 
\[\int_{D}|\nabla (u^{\varepsilon} - w^{\varepsilon}u)|^{2} \to \inn{f}{u}_{H^{-1}, H^{1}_{0}} + \int_{D}|\nabla u|^{2} + C_{0}\int_{D}|u|^{2} - 2\int_{D}|\nabla u|^{2} - 2C_{0}\int_{D}|u|^{2} = 0 \text{ a.s.}\]

Next, we turn to the proof of the convergence in $L^{2}(\pr)$. By construction, $0 \le \hat{e}^{\varepsilon} \le 1$. Thus, 
\begin{equation}
    \label{eq: bound for corrector}
    \begin{split}
        \n{\nabla(u^{\varepsilon} - (u - \hat{e}^{\varepsilon}u))}_{L^{2}(D)}^{2} &\le C(d, D)\left(\sup_{\varepsilon}\n{f^{\varepsilon}}_{H^{-1}(D)}^{2} + \n{u\nabla \hat{e}^{\varepsilon} + \hat{e}^{\varepsilon}\nabla u}_{L^{2}(D)}^{2}\right)\\
        &\le  C(d, D)\left(\sup_{\varepsilon}\n{f^{\varepsilon}}_{H^{-1}(D)}^{2} + b\n{\nabla \hat{e}^{\varepsilon}}_{L^{2}(D)}^{2}\right).
    \end{split}
\end{equation}
Moreover, we estimate 
\begin{equation}
    \label{eq: energy estimate of osc fct}
    \begin{split}
        \n{\nabla \hat{e}^{\varepsilon}}_{L^{2}(D)}^{2} &\le  \n{\nabla \hat{e}^{\varepsilon}_{g}}_{L^{2}(W)}^{2} + \n{\nabla \hat{e}^{\varepsilon}_{b}}_{L^{2}(W)}^{2}\\
        &\le \sum_{z \in I^{\varepsilon}_{g}}\mathrm{Cap}(\varepsilon z + \varepsilon^{\frac{d}{d - 2}}K^{\varepsilon}_{z}, U^{\varepsilon}_{z}) + \mathrm{Cap}\left( \bigcup_{z \in I^{\varepsilon}_{b}}(\varepsilon z + \varepsilon^{\frac{d}{d - 2}}K^{\varepsilon}_{z}), D^{\varepsilon}_{b} \right)\\
        &\le f_{d}(2)\varepsilon^{d}\sum_{z \in \Phi \cap \varepsilon^{-1}W}\mathrm{Cap}K^{\varepsilon}_{z}\\
        &\lesssim \varepsilon^{d}\sum_{z \in \Phi \cap \varepsilon^{-1}W}k_{z}.
    \end{split}
\end{equation}
Now let us assume that $b, \E[C_{0}] < \infty$. Then by Lemma \ref{thm: averaging h over A}, the right-hand side of \eqref{eq: energy estimate of osc fct} converges in $L^{1}(\pr)$ as $\varepsilon \searrow 0$. Hence, by \eqref{eq: bound for corrector}, \eqref{eq: energy estimate of osc fct}, and the a.s. convergence \eqref{eq: corrector}, we conclude that the convergence \eqref{eq: corrector} takes place in $L^{2}(\pr)$.
\end{proof}

\section{Appendix}
\label{sect: appendix}
This section contains several auxiliary results on MPPs. We first establish a strong law of large numbers (Lemma \ref{thm: averaging h over A}), which is a slight generalization of \cite[Theorem 2.10]{Bas25} in terms of the integrability of the limit. Next, using Lemma \ref{thm: averaging h over A}, we prove lemmas (Lemma \ref{thm: giunti lem5_2} and \ref{thm: giunti 5_3}) corresponding to \cite[Lemma 5.2]{giunti2018homogenization} and \cite[Lemma 5.3]{giunti2018homogenization}, the proofs of which in \cite{giunti2018homogenization} rely on a result that is not applicable in our setting (see \cite[Lemma 6.1]{giunti2018homogenization}). Lemma \ref{thm: continuity of thinning} is a variant of \cite[Proposition 7.4]{Bas25} adapted to our needs.

Fix a complete separable metric space $\mathcal{K}$ and a stationary MPP $X$ on $\R^{d}$ with marks in $\mathcal{K}$ such that $\E[\#(X \cap A)] < \infty$ for every bounded $A \in \mathcal{B}(\R^{d} \times \mathcal{K})$. Let $\Phi$ denote the ground process of $X$ and write $k = k_{z}$ if $(z, k) \in \Phi$. Note that we do not assume that $\E[\#(\Phi \cap I)] < \infty$ for bounded $I \in \mathcal{B}(\R^{d})$.
\begin{lemma}[cf. {\cite[Theorem 2.10]{Bas25}}]
    \label{thm: averaging h over A}
    Let $h: \mathcal{K} \to [0, \infty[$ be Borel measurable and satisfy
    \[\int_{\mathcal{K}}h(k) \,\psi[X](dk) < \infty \text{ a.s.}\]
    Then a.s., for every bounded $B \in \mathcal{B}(\R^{d})$ with $|\partial B| = 0$, 
    \begin{equation}
        \label{eq: avg h over A}
        \varepsilon^{d}\sum_{z \in \Phi \cap \varepsilon^{-1}B}h(k_{z}) \to |B|\int_{\mathcal{K}} h(k)\,\psi[X](dk)
    \end{equation}
    as $\varepsilon \searrow 0$. Consequently, for every bounded $B \in \mathcal{B}(\R^{d})$ with $|\partial B| = 0$ and bounded continuous $V: B \to \R$, 
    \begin{equation}
        \label{eq: avg Vh over A}
        \varepsilon^{d}\sum_{z \in \Phi \cap \varepsilon^{-1}B}V(\varepsilon z)h(k_{z}) \to \int_{B}V \,dx \int_{\mathcal{K}}h(k) \,\psi[X](dk)
    \end{equation}
    a.s. Further, if $\E \int_{\mathcal{K}}h(k) \,\psi[X](dk) < \infty$, then the two convergences above take place also in $L^{1}(\pr)$.
\end{lemma}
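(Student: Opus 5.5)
The plan is to reduce the statement to the pointwise ergodic theorem for stationary marked point processes. Write $\mu_h$ for the random measure $\mu_h(I) := \sum_{z \in \Phi \cap I} h(k_z)$ on $\R^{d}$. Stationarity of $X$ makes $\mu_h$ stationary, because the shifts $S_\tau$ act by translating the argument. The obstacle is that $\mu_h$ need not have finite intensity: only $\int h\,d\psi[X] < \infty$ a.s. is assumed, not in expectation. So I first truncate. For $n \in \mathbb{N}$, let $h_n := (h \wedge n)\chi_{L_n}$, where $(L_n)$ increases to $\mathcal{K}$ and $\E[\#(X \cap ([0,1]^d \times L_n))] < \infty$. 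Such $L_n$ exist: the hypothesis that bounded sets have finite mean count presupposes a notion of boundedness on $\mathcal{K}$ (e.g.\ bounded balls around a fixed point), and we take $L_n$ to be those balls. Then $\E\,\mu_{h_n}([0,1]^d) < \infty$, and the multiparameter ergodic theorem for stationary random measures with finite intensity (Daley--Vere-Jones, Prop.~12.2.II / Theorem of Nguyen--Zessin) gives, a.s. and in $L^1(\pr)$,
\[
\varepsilon^{d}\mu_{h_n}(\varepsilon^{-1}Q) \to |Q|\,\E\bigl[\mu_{h_n}([0,1]^d)\,\big|\,\mathcal{I}_X\bigr] = |Q|\int h_n\,d\psi[X]
\]
for every cube $Q$ with rational vertices. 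The last equality is exactly \eqref{eq: def of psi X}. Since there are countably many such cubes, there is one null set outside which the convergence holds for all of them simultaneously.

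Next I pass from rational cubes to a general bounded $B$ with $|\partial B| = 0$. Fix a dyadic grid of mesh $2^{-j}$. Let $B_j^-$ be the union of grid cubes inside the interior of $B$, and $B_j^+$ the union of grid cubes meeting $\overline{B}$. By monotonicity of $\mu_{h_n}$ (here $h_n \ge 0$ is used), $\varepsilon^{d}\mu_{h_n}(\varepsilon^{-1}B)$ is squeezed between the corresponding quantities for $B_j^{\pm}$. Their limits are $|B_j^\pm|\int h_n\,d\psi$, and $|B_j^\pm| \to |B|$ as $j \to \infty$ because $|\partial B| = 0$. Hence \eqref{eq: avg h over A} holds for $h_n$, for all such $B$ at once, on one full-measure event.

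I then remove the truncation, which is the main step. The lower bound $\liminf_\varepsilon \varepsilon^d\mu_h(\varepsilon^{-1}B) \ge |B|\int h_n\,d\psi$ is immediate, and letting $n \to \infty$ with monotone convergence gives the correct $\liminf$. For the upper bound I condition on $\mathcal{I}_X$. On the $\mathcal{I}_X$-measurable event $\{\int h\,d\psi \le m\}$, the conditional intensity of $\mu_h$ is at most $m$. So $\mu_h\chi_{\{\int h\,d\psi \le m\}}$ is again a stationary random measure, since that event is invariant, and it has finite intensity. Applying the same ergodic theorem directly to it, with conditional limit $\chi_{\{\int h\,d\psi\le m\}}\int h\,d\psi$, gives \eqref{eq: avg h over A} on that event. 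Taking the union over $m$ exhausts $\Omega$ up to a null set, since $\int h\,d\psi < \infty$ a.s. This localization by invariant events is the step I expect to require the most care. One must check that $\mu_h\chi_E$ is stationary for $E \in \mathcal{I}_X$ (true up to null sets, because $E = \{X \in \mathcal{A}\}$ with $\mathcal{A}$ a.s.\ shift invariant), and that the conditional identification in \eqref{eq: def of psi X} survives this multiplication. Once this localization is available, the truncation in the first step is in fact unnecessary, apart from guaranteeing finite intensity.

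For \eqref{eq: avg Vh over A}, I approximate $V$ uniformly on $B$ by simple functions $\sum_i c_i\chi_{B_i}$, where the $B_i$ partition $B$ and satisfy $|\partial B_i| = 0$ (intersections with small cubes). The error term is then bounded by $\sup|V - V_j|\cdot\varepsilon^d\mu_h(\varepsilon^{-1}B)$, which stays bounded by \eqref{eq: avg h over A}. Finally, for the $L^1(\pr)$ statement when $\E\int h\,d\psi < \infty$: the random measure $\mu_h$ then has finite intensity, and the $L^1$ part of the ergodic theorem applies directly to rational cubes. The sandwich argument, using the $L^1$ convergence of the upper and lower bounds, together with Scheffé-type domination, yields $L^1$ convergence for general $B$ and hence for the $V$-weighted sums.
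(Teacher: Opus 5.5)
Your proof is correct, but it handles the central difficulty differently from the paper. The difficulty is that $\sum_{z\in\Phi\cap[0,1[^d}h(k_z)$ need not be integrable; only its conditional expectation $\int h\,d\psi[X]$ is a.s.\ finite.

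The paper neither truncates nor localizes. In the proof of \cite[Theorem 12.2.IV]{daley2008introduction} it replaces the finite-intensity ergodic theorem \cite[Proposition 12.2.II]{daley2008introduction} by Kallenberg's multivariate ergodic theorem \cite[Theorem 25.14]{kallenberg2021foundations}. That theorem needs no integrability of the nonnegative functional and identifies the a.s.\ limit with its (generalized) conditional expectation given the invariant $\sigma$-field, i.e.\ with $|I|\int h\,d\psi[X]$ by \eqref{eq: def of psi X}; the hypothesis only makes this limit finite. The paper then cites \cite[Section 7.1]{Bas25} to pass from averages over $nI$ with $0\in I$ to continuous $\varepsilon$ and sets with null boundary. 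It uses the portmanteau theorem for the $V$-weighted sums, and gets $L^1(\pr)$ convergence by domination with the sum over a rectangle $I\supseteq B$.

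Your localization on the invariant events $E_m=\{\int h\,d\psi[X]\le m\}$ needs only the classical finite-intensity theorem, and it makes explicit why a.s.\ finiteness suffices. The checks you flag go through:
\begin{enumerate}
\item For $G=\{X\in\mathcal{A}\}\in\mathcal{I}_X$ one has $\chi_{\mathcal{A}}(S_\tau X)=\chi_{\mathcal{A}}(X)$ a.s., so $\chi_G\mu_h$ is stationary.
\item $L^1$ convergence then gives $\E[\chi_G Y]=\E[\chi_{G\cap E_m}\mu_h([0,1[^d)]$ for the shift-invariant limit $Y$ of the averages of $\chi_{E_m}\mu_h$. Hence $Y=\chi_{E_m}\int h\,d\psi[X]$ a.s., and the truncation $h_n$ is indeed superfluous.
\end{enumerate}
As in the paper, you still owe the routine step from nested sequences $nQ$ with $0\in Q$ to arbitrary rational cubes and continuous $\varepsilon$. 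This follows from inclusion--exclusion together with $\lfloor\varepsilon^{-1}\rfloor Q\subseteq\varepsilon^{-1}Q\subseteq\lceil\varepsilon^{-1}\rceil Q$.

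One step needs repair. Approximating $V$ uniformly by functions constant on the intersections of $B$ with small cubes requires $V$ to be uniformly continuous on $B$. This fails for bounded continuous $V$ on non-compact $B$, e.g.\ $V(x)=\sin(1/x_1)$ on $B=]0,1[^d$. The paper does use the lemma for such $V$ on the open window $W$.

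Partition by level sets instead. Choose $a_0<a_1<\dots<a_N$ with $a_0<\inf_B V$, $a_N>\sup_B V$, $a_i-a_{i-1}\le\eta$, and $|\{x\in B: V(x)=a_i\}|=0$ for every $i$. All but countably many levels qualify, since these sets are disjoint subsets of the bounded set $B$. Put $B_i:=\{x\in B: a_{i-1}\le V(x)<a_i\}$. Continuity of $V$ on $B$ gives $\partial B_i\subseteq\partial B\cup\{x\in B: V(x)\in\{a_{i-1},a_i\}\}$, so $|\partial B_i|=0$ and your error estimate goes through unchanged. Alternatively, argue with the portmanteau theorem as the paper does.
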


\begin{proof}
    Put
    \[\mathscr{I}^{d}_{b} := \{\Pi_{i = 1}^{d}I_{i}: \text{ each $I_{i}$ is of the form $[a_{i}, b_{i}[$ with $-\infty < a_{i} < b_{i} < \infty$}\}.\]
    By using \cite[Theorem 25.14]{kallenberg2021foundations} instead of \cite[Proposition 12.2.II]{daley2008introduction} in the proof of \cite[Theorem 12.2.IV]{daley2008introduction}, we see that for every $I \in \mathscr{I}^{d}_{b}$ containing the origin, 
    \[n^{-d}\sum_{z \in \Phi \cap nI}h(k_{z}) \to |I|\int_{\mathcal{K}}h(k)\,\psi[X](dk)\]
    a.s., and if $\E [\int_{\mathcal{K}}h\,d\psi[X]] < \infty$, in $L^{1}(\pr)$.
    From this, we deduce that 
    \[\varepsilon^{d}\sum_{z \in \Phi \cap \varepsilon^{-1}I}h(k_{z}) \to |I|\int_{\mathcal{K}}h(k)\,\psi[X](dk)\]
    a.s., and if $\E [\int_{\mathcal{K}}h\,d\psi[X]] < \infty$, in $L^{1}(\pr)$ for every $I \in \mathscr{I}^{d}_{b}$ as $\varepsilon \searrow 0$ and that with probability one, for every bounded $B \in \mathcal{B}(\R^{d})$ with $|\partial B| = 0$, 
    \[\varepsilon^{d}\sum_{z \in \Phi \cap \varepsilon^{-1}B}h(k_{z}) \to |B|\int_{\mathcal{K}}h(k) \,\psi[X](dk)\]
    (see \cite[Section 7.1]{Bas25}). Thus, by the portmanteau theorem, the a.s. convergence \eqref{eq: avg Vh over A} takes place.

    As for the $L^{1}(\pr)$ convergence, note that for every bounded $B \in \mathcal{B}(\R^{d})$ with $|\partial B| = 0$, bounded continuous $V: B \to \R$, and $I \in \mathscr{I}^{d}_{b}$ such that $I \supseteq B$, we have
    \[\left| \varepsilon^{d}\sum_{z \in \Phi \cap \varepsilon^{-1}B}V(\varepsilon z)h(k_{z}) \right| \le \sup_{B}|V|\varepsilon^{d}\sum_{z \in \Phi \cap \varepsilon^{-1}I}h(k_{z}),\]
    and the right-hand side of the last display converges in $L^{1}(\pr)$ if $\E \int_{\mathcal{K}}h\,d\psi[X] < \infty$.
\end{proof}

\begin{lemma}[cf.{\cite[Lemma 5.2]{giunti2018homogenization}}]
    \label{thm: giunti lem5_2}
    Let $A \in \mathcal{B}(\R^{d})$ be bounded and $h: \mathcal{K} \to [0, \infty[$ a Borel function such that $\int h \,d\psi[X] < \infty$ a.s. For $\varepsilon > 0$, let $I^{\varepsilon} \sub \Phi \cap \varepsilon^{-1}A$ such that $\varepsilon^{d}\#I^{\varepsilon} \to 0$ a.s. Then
    \[\varepsilon^{d}\sum_{z \in I^{\varepsilon}}h(k_{z}) \to 0 \text{ a.s.}\]
\end{lemma}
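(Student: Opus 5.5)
The plan is a truncation argument that splits $h$ into a bounded part and a large-value part, applying Lemma \ref{thm: averaging h over A} to the latter. Fix a realization outside a null set chosen below, and a bounded box $I \in \mathscr{I}^{d}_{b}$ with $I \supseteq A$ and $|\partial I| = 0$. For $M > 0$ write $h = h\wedge M + (h - M)_{+}$. Then
\begin{equation*}
\varepsilon^{d}\sum_{z \in I^{\varepsilon}}h(k_{z}) \le M\varepsilon^{d}\#I^{\varepsilon} + \varepsilon^{d}\sum_{z \in \Phi \cap \varepsilon^{-1}I}(h(k_{z}) - M)_{+},
\end{equation*}
where in the second term we enlarged $I^{\varepsilon}$ to all of $\Phi \cap \varepsilon^{-1}I$, using that $I^{\varepsilon} \sub \Phi\cap\varepsilon^{-1}A$ and $h \ge 0$.

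By hypothesis the first term tends to $0$ a.s. for each fixed $M$. For the second term, $(h - M)_{+} \le h$, so $\int (h - M)_{+}\,d\psi[X] < \infty$ a.s. Lemma \ref{thm: averaging h over A}, applied with the function $(h-M)_{+}$ and the set $B = I$, then gives a.s.
\begin{equation*}
\limsup_{\varepsilon \searrow 0}\varepsilon^{d}\sum_{z \in I^{\varepsilon}}h(k_{z}) \le |I|\int_{\mathcal{K}}(h - M)_{+}\,d\psi[X].
\end{equation*}
To make this hold simultaneously for all $M$, restrict $M$ to $\mathbb{N}$; the exceptional sets are then countably many null sets, whose union is null.

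Finally, let $M \to \infty$ along $\mathbb{N}$. On the full-measure event $\{\int h\,d\psi[X] < \infty\}$, dominated convergence (with dominating function $h$) gives $\int (h-M)_{+}\,d\psi[X] \to 0$, which yields the claim.

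The one point to check is that Lemma \ref{thm: averaging h over A} applies to a fixed box $I$ with a fixed function, which it does. No serious obstacle is expected. The only subtlety is that $\psi[X]$ is a random measure, so the order of "a.s." and "for all $M$" must be handled, and this is why $M$ is taken countable.
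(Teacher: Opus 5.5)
Your proposal is correct and is essentially the paper's own argument. Both split $h$ at level $M$ and bound the bounded part by $M\varepsilon^{d}\#I^{\varepsilon}$. Both control the tail by Lemma \ref{thm: averaging h over A} on a set with null boundary containing $A$, then let $M\to\infty$. The differences are cosmetic: the paper uses $h\chi_{\{h\ge M\}}$ where you use $(h-M)_{+}$, and your restriction of $M$ to $\mathbb{N}$ makes explicit the countability step that the paper leaves implicit.
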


\begin{proof}
    We may assume that $|\partial A| = 0$. Then 
    \begin{equation*}
        \begin{split}
            &\limsup_{\varepsilon}\varepsilon^{d}\sum_{k \in I^{\varepsilon}}h(k_{z})\\
            &\le \liminf_{M \to \infty}\limsup_{\varepsilon}\left( M\varepsilon^{d}\#I^{\varepsilon} + \varepsilon^{d}\sum_{\substack{z \in \Phi \cap \varepsilon^{-1}A}} h(k_{z})\chi_{h \ge M}(k_{z})\right)\\
            &= \liminf_{M \to \infty}|A|\int h\chi_{h \ge M}\,d\psi[X]\\
            &= 0 \text{ a.s.}
        \end{split}
    \end{equation*}
\end{proof}

\begin{lemma}[cf.{\cite[Lemma 5.3]{giunti2018homogenization}}]
    \label{thm: giunti 5_3}
    Let $h: \mathcal{K} \to \R$ be bounded Borel and $\int_{\mathcal{K}}h  \,d\psi[X] < \infty$, $A \in \mathcal{B}(\R^{d})$ bounded with $|\partial A| = 0$, and $I^{\varepsilon}(\omega) \sub \Phi(\omega) \cap \varepsilon^{-1}A$ such that
    \begin{equation}
        \label{cond: Ie to NA asymptotically}
        \varepsilon^{d}\#(\Phi \cap \varepsilon^{-1}A \setminus I^{\varepsilon}) \to 0 \text{ a.s.}
    \end{equation}
    For $z \in I^{\varepsilon}(\omega)$, let $r^{\varepsilon}_{z}(\omega)$ such that 
    \begin{equation}
        \label{cond: rezk is of order e}
        r^{\varepsilon}_{z}(\omega) \asymp \varepsilon,
    \end{equation}
    and that the balls $(U^{\varepsilon}_{z})_{z \in I^{\varepsilon}} := (U_{r^{\varepsilon}_{z}}(\varepsilon z))_{z \in I^{\varepsilon}}$ are disjoint. Then a.s., as $\varepsilon \searrow 0$, 
    \begin{equation}
        \label{eq: weak-star convergence in L infty}
        \varepsilon^{d}\sum_{z \in I^{\varepsilon}}h(k_{z})\frac{\chi_{U^{\varepsilon}_{z}}}{|U^{\varepsilon}_{z}|} \wst \chi_{A}\int_{\mathcal{K}}h(k) \,\psi[X](dk) \text{ in $L^{\infty}(\R^{d})$}
    \end{equation}
    and
    \begin{equation}
        \label{eq: strong convergence in W- 1infty loc}
        \varepsilon^{d}\sum_{z \in I^{\varepsilon}} h(k_{z})\frac{\delta_{\partial U^{\varepsilon}_{z}}}{\mathcal{H}^{d - 1}(\partial U^{\varepsilon}_{z})} \to \chi_{A}\int_{\mathcal{K}}h(k) \,\psi[X](dk) \text{ strongly in $W^{-1, q}(\R^{d})$}
    \end{equation}
    for every $1 < q \le \infty$. Here, $\mathcal{H}^{d - 1}$ denotes the $(d - 1)$-dimensional Hausdorff measure.
\end{lemma}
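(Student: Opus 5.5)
We prove \eqref{eq: weak-star convergence in L infty} first and then deduce \eqref{eq: strong convergence in W- 1infty loc} from it by the standard device of \cite{cioranescu1997strange} and \cite[Lemma 5.3]{giunti2018homogenization}: compare the surface measures with the normalized volume measures through a local Neumann problem.

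\textbf{Step 1 (weak-$*$ convergence).} Fix $\omega$ in the full-measure event on which Lemma \ref{thm: averaging h over A} holds for $h$ and for $h \equiv 1$ simultaneously for all bounded $B$ with $|\partial B| = 0$, and on which \eqref{cond: Ie to NA asymptotically} holds. Put $g^{\varepsilon} := \varepsilon^{d}\sum_{z \in I^{\varepsilon}}h(k_{z})\chi_{U^{\varepsilon}_{z}}/|U^{\varepsilon}_{z}|$.

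\emph{Uniform bound.} The balls are disjoint and $|U^{\varepsilon}_{z}| \asymp \varepsilon^{d}$. Since $h$ is bounded, $\n{g^{\varepsilon}}_{L^{\infty}} \lesssim \sup|h|$, and the supports lie in a fixed bounded set.

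\emph{Testing.} By density it then suffices to test against $\varphi \in C_{c}(\R^{d})$. By uniform continuity, $\int g^{\varepsilon}\varphi = \varepsilon^{d}\sum_{z \in I^{\varepsilon}}h(k_{z})\varphi(\varepsilon z) + o(1)\cdot \varepsilon^{d}\#(\Phi \cap \varepsilon^{-1}A)$, and the error vanishes by Lemma \ref{thm: averaging h over A} with $h \equiv 1$.

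\emph{Replacing $I^{\varepsilon}$ by $\Phi \cap \varepsilon^{-1}A$.} The cost is at most $\sup|h|\sup|\varphi|\,\varepsilon^{d}\#(\Phi\cap\varepsilon^{-1}A\setminus I^{\varepsilon}) \to 0$. The sum over all of $\Phi \cap \varepsilon^{-1}A$ converges to $\int_{A}\varphi\,dx\int h\,d\psi[X]$ by \eqref{eq: avg Vh over A}. (If $h$ is signed, split $h = h^{+} - h^{-}$; both parts are bounded, so their $\psi[X]$-integrals are finite because $\psi[X](\mathcal{K}) < \infty$ a.s.)

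\textbf{Step 2 (comparison of surface and volume measures).} For a ball $U = U_{r}(x)$, let $q_{U}$ solve the Neumann problem
\[\Delta q_{U} = \frac{\delta_{\partial U}}{\mathcal{H}^{d - 1}(\partial U)} - \frac{\chi_{U}}{|U|} \text{ in } U, \qquad \partial_{n}q_{U} = 0,\]
which is solvable since the right-hand side has zero mean. By scaling, $q_{U}$ is radial, and $\nabla q_{U}$ is explicit with $|\nabla q_{U}| \lesssim r^{1-d}$ on $U$. Extending $\nabla q_{U}$ by $0$ outside $U$ gives a vector field $\xi$ with
\[\mathrm{div}\,\xi = \varepsilon^{d}\sum_{z \in I^{\varepsilon}}h(k_{z})\left(\frac{\delta_{\partial U^{\varepsilon}_{z}}}{\mathcal{H}^{d - 1}(\partial U^{\varepsilon}_{z})} - \frac{\chi_{U^{\varepsilon}_{z}}}{|U^{\varepsilon}_{z}|}\right)\]
in distributions on $\R^{d}$, using the Neumann condition and the disjointness of the balls. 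On each ball, $|\xi| \lesssim \varepsilon^{d}\sup|h|\,\varepsilon^{1-d} = O(\varepsilon)$. Hence $\n{\xi}_{L^{\infty}} \to 0$ with uniformly bounded support, so $\n{\xi}_{L^{q}} \to 0$ for every $q \le \infty$. Since the $W^{-1,q}$ norm of the difference is at most $\n{\xi}_{L^{q}}$, the difference tends to $0$ strongly in $W^{-1,q}(\R^{d})$.

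\textbf{Step 3 (conclusion).} It remains to upgrade \eqref{eq: weak-star convergence in L infty} to strong convergence of the volume terms in $W^{-1,q}$. Bounded, equi-compactly supported sequences converging weak-$*$ in $L^{\infty}$ converge weakly in $L^{s}$ for every finite $s$. The embedding $L^{s} \hookrightarrow W^{-1,q}$ is compact on a fixed bounded set, for a suitable choice of $s$ (by duality from Rellich). This yields strong convergence for $q < \infty$.

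The main obstacle is the endpoint $q = \infty$. For it, we write $g^{\varepsilon} - g = \mathrm{div}(\nabla \Delta^{-1}(g^{\varepsilon} - g))$, cut off to a fixed ball. The operator $\nabla\Delta^{-1}$ maps bounded, compactly supported $L^{\infty}$ functions into $C^{0,\alpha}_{\mathrm{loc}}$, with bounds uniform in $\varepsilon$. Arzelà–Ascoli together with the weak convergence then gives uniform convergence of $\nabla\Delta^{-1}(g^{\varepsilon} - g)$ on compacts, which is enough since everything is supported in a fixed bounded set.
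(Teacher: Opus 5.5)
Your proposal is correct and takes essentially the same route as the paper. It proves weak-$*$ convergence by testing, replacing ball averages by point values and $I^{\varepsilon}$ by $\Phi \cap \varepsilon^{-1}A$; it uses the explicit radial function of Lemma~\ref{thm: difference between Dirac measures on sphere and on ball} to get an $O(\varepsilon)$ bound on the sphere-versus-ball difference; and it concludes by compactness for supports in a fixed bounded set, which you spell out (Rellich by duality for $q<\infty$, Newtonian potential plus Arzel\`a--Ascoli for $q=\infty$) where the paper only cites relative compactness of $\bigcup U^{\varepsilon}_{z}$.

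One small point: applying Lemma~\ref{thm: averaging h over A} with $h \equiv 1$ presupposes $\psi[X](\mathcal{K}) < \infty$ a.s., which is not a standing assumption in the appendix. You do not need it, because disjointness of the balls $U^{\varepsilon}_{z}$ (radii $\asymp \varepsilon$, centers in the bounded set $A$) already keeps $\varepsilon^{d}\#I^{\varepsilon}$ bounded, and that is all your error estimate uses.
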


\begin{proof}
    Note that 
    \[\n{\varepsilon^{d}\sum_{z \in I^{\varepsilon}}h(k_{z})\frac{\chi_{U^{\varepsilon}_{z}}}{|U^{\varepsilon}_{z}|}}_{L^{\infty}(\R^{d})} \lesssim \n{h}_{\mathrm{sup}}\]
    for every $\varepsilon > 0$ a.s., and that a countable subset of $C^{1}_{c}(\R^{d})$ is dense in $L^{1}(\R^{d})$. Hence, to show \eqref{eq: weak-star convergence in L infty}, it suffices to show that for every $\zeta \in C^{1}_{c}(\R^{d})$, 
    \begin{equation}
        \label{eq: convergence of Linfty-L1 pairing}
        \varepsilon^{d}\sum_{z \in I^{\varepsilon}}h(k_{z})\fint_{U^{\varepsilon}_{z}}\zeta \,dx \to \int_{A}\zeta(z) \,dz \int_{\mathcal{K}}h(k) \,\psi[X](dk) \text{ a.s.}
    \end{equation}
    Fix $\zeta \in C^{1}_{c}(\R^{d})$. Then 
    \begin{equation}
        \label{eq: approximate rezk by e}
        \begin{split}
            &\left| \varepsilon^{d}\sum_{z \in I^{\varepsilon}}h(k_{z})\fint_{U^{\varepsilon}_{z}}\zeta \,dx - \varepsilon^{d}\sum_{z \in I^{\varepsilon}}h(k_{z})\fint_{U_{\varepsilon}(\varepsilon z)}\zeta \,dx \right|\\
            &\lesssim \left| \sum_{z \in I^{\varepsilon}}h(k_{z}) \int_{U_{\varepsilon}(\varepsilon z)}\left( \zeta\left(\frac{r^{\varepsilon}_{z}}{\varepsilon}(y - \varepsilon z) + \varepsilon z\right) - \zeta(y) \right)dy\right|\\
            &\lesssim \varepsilon^{d}\sum_{z \in I^{\varepsilon}}|h(k_{z})| \varepsilon\n{\nabla \zeta}_{\mathrm{sup}}\\
            &\le \varepsilon \n{\nabla \zeta}_{\mathrm{sup}}\cdot \varepsilon^{d}\sum_{z \in \Phi \cap \varepsilon^{-1}A}|h(k_{z})|\\
            &\to 0 \text{ a.s.}
        \end{split}
    \end{equation}
    by Lemma \ref{thm: averaging h over A}. Further, 
    \begin{equation}
        \label{eq: approximate integral over a ball by the value at the center}
        \begin{split}
            &\left| \varepsilon^{d}\sum_{z \in I^{\varepsilon}}h(k_{z})\fint_{U_{\varepsilon}(\varepsilon z)}\zeta \,dx - \varepsilon^{d}\sum_{z \in I^{\varepsilon}}h(k_{z})\zeta(\varepsilon z) \right|\\
            &\le \varepsilon^{d}\sum_{z \in I^{\varepsilon}}|h(k_{z})|\fint_{U_{\varepsilon}(\varepsilon z)}|\zeta(x) - \zeta(\varepsilon z)|\,dx\\
            &\le \varepsilon \n{\nabla \zeta}_{\mathrm{sup}}\cdot \varepsilon^{d}\sum_{z \in \Phi \cap \varepsilon^{-1}A}|h(k_{z})|\\
            &\to 0 \text{ a.s.}
        \end{split}
    \end{equation}
    by Lemma \ref{thm: averaging h over A}. Also, by the assumption \eqref{cond: Ie to NA asymptotically},
    \begin{equation}
        \label{eq: compare sums over Ie and over N cap e-1A}
        \begin{split}
            \left|\varepsilon^{d}\left(\sum_{z \in I^{\varepsilon}} - \sum_{z \in \Phi \cap \varepsilon^{-1}A} \right)h(k_{z})\zeta(\varepsilon z) \right| 
            &\lesssim \n{h \zeta}_{\mathrm{sup}}\varepsilon^{d}\#\left( \Phi \cap \varepsilon^{-1}A \setminus I^{\varepsilon} \right)\\
            &\to 0 \text{ a.s.}
        \end{split}
    \end{equation}
    By \eqref{eq: approximate rezk by e} to \eqref{eq: compare sums over Ie and over N cap e-1A}, and by Lemma \ref{thm: averaging h over A}, \eqref{eq: convergence of Linfty-L1 pairing} holds true. Hence, the weak-star convergence \eqref{eq: weak-star convergence in L infty} is established.

    Next, we show \eqref{eq: strong convergence in W- 1infty loc}. By Lemma \ref{thm: difference between Dirac measures on sphere and on ball} below, 
    \begin{equation}
        \label{eq: difference between averaging on sphere and on ball}
        \begin{split}
            &\n{\varepsilon^{d}\sum_{z \in I^{\varepsilon}}h(k_{z})\frac{\delta_{\partial U^{\varepsilon}_{z}}}{\mathcal{H}^{d - 1}\left(\delta_{\partial U^{\varepsilon}_{z}}\right)} - \varepsilon^{d}\sum_{z \in I^{\varepsilon}}h(k_{z})\frac{\chi_{U^{\varepsilon}_{z}}}{\left| U^{\varepsilon}_{z} \right|}}_{W^{-1, \infty}(\R^{d})}\\
        &\lesssim \varepsilon \n{h}_{\mathrm{sup}}.
        \end{split}
    \end{equation}
    By \eqref{eq: difference between averaging on sphere and on ball} and \eqref{eq: weak-star convergence in L infty}, \eqref{eq: strong convergence in W- 1infty loc} holds since $\bigcup_{\varepsilon \in ]0, 1], z \in I^{\varepsilon}}U^{\varepsilon}_{z}$ is relatively compact.
\end{proof}

\begin{lemma}
    \label{thm: difference between Dirac measures on sphere and on ball}
    Let $(U_{i})_{i = 1}^{l}$ be disjoint open balls with each $U_{i}$ of radius $r_{i}$, and let $a_{1}, \cdots, a_{l}  \in \R$. Then 
    \begin{equation}
        \label{eq: difference between Dirac measures on sphere and on ball}
        \n{\sum_{i = 1}^{l}a_{i}\left( \delta_{\partial U_{i}} - \frac{d}{r_{i}}\chi_{U_{i}} \right)}_{W^{-1, \infty}(\R^{d})} \le \max_{1 \le i \le l}|a_{i}|.
    \end{equation}
\end{lemma}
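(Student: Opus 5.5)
We argue by duality, testing against $\varphi \in W^{1,1}(\R^{d})$; by density it suffices to take $\varphi \in C^{1}_{c}(\R^{d})$. Because the balls are disjoint, the pairing splits as a sum over the individual balls,
\[\Big\langle \sum_{i}a_{i}\big(\delta_{\partial U_{i}} - \tfrac{d}{r_{i}}\chi_{U_{i}}\big), \varphi\Big\rangle = \sum_{i}a_{i}\Big(\int_{\partial U_{i}}\varphi\,d\mathcal{H}^{d-1} - \frac{d}{r_{i}}\int_{U_{i}}\varphi\,dx\Big).\]
It is therefore enough to prove the single-ball bound
\[\Big|\int_{\partial U}\varphi\,d\mathcal{H}^{d-1} - \frac{d}{r}\int_{U}\varphi\,dx\Big| \le \int_{U}|\nabla\varphi|\,dx\]
for a ball $U=U_{r}(x_{0})$. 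Granting this, the total pairing is bounded by $\max_{i}|a_{i}|\sum_{i}\int_{U_{i}}|\nabla\varphi| \le \max_{i}|a_{i}|\,\n{\varphi}_{W^{1,1}(\R^{d})}$, again by disjointness. This is exactly the claimed $W^{-1,\infty}$ bound, with $W^{-1,\infty}$ understood as the dual of $W^{1,1}$.

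For the single-ball estimate we use the divergence theorem with the vector field $F(x) := \frac{x - x_{0}}{r}$. This field satisfies $\operatorname{div}F = \frac{d}{r}$ in $U$, $F\cdot n = 1$ on $\partial U$, and $|F| \le 1$ in $U$. Applying the divergence theorem to $\varphi F$ gives
\[\int_{\partial U}\varphi\,d\mathcal{H}^{d-1} = \int_{U}\operatorname{div}(\varphi F)\,dx = \frac{d}{r}\int_{U}\varphi\,dx + \int_{U}\nabla\varphi\cdot F\,dx.\]
The difference is therefore $\int_{U}\nabla\varphi\cdot F$, whose absolute value is at most $\int_{U}|\nabla\varphi|$. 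The constant comes out as $1$, matching the statement.

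The only point needing care is the meaning of the $W^{-1,\infty}$ norm. If the paper intends the dual of $W^{1,1}_{0}$ or $W^{1,1}$, the argument above applies verbatim. Alternatively, we can realize the functional as $\operatorname{div}G$ with $G := \sum_{i}a_{i}F_{i}\chi_{U_{i}}$, where $F_{i}$ is the field above for $U_{i}$. Then $\n{G}_{L^{\infty}} \le \max_{i}|a_{i}|$, and the computation shows that the distribution equals $-\operatorname{div}G$ (up to sign convention). This gives the bound for the norm $\inf\{\n{G}_{\infty}\}$ directly. Boundary traces of $\varphi \in W^{1,1}$ are handled by the density of smooth functions, so no step is a genuine obstacle.
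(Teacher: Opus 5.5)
Your proposal is correct and is essentially the paper's argument. Your field $F_i\chi_{U_i} = \frac{x-x_i}{r_i}\chi_{U_i}$ is exactly $\nabla q_i$ for the paper's $q_i(x) = \frac{1}{2r_i}(|x-x_i|^2-r_i^2)\chi_{U_i}$, and the paper likewise writes the functional as $-\mathrm{div}\bigl(\sum_i a_i\nabla q_i\bigr)$ and bounds its $W^{-1,\infty}$ norm by $\|\sum_i a_i\nabla q_i\|_{L^\infty}\le\max_i|a_i|$, using disjointness of the supports; your ball-by-ball divergence-theorem computation against $W^{1,1}$ test functions is just the unpacked form of that estimate.
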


\begin{proof}
    For $i = 1, \cdots, l$, let $q_{i}$ be the solution to
    \begin{equation}
        \label{def: qi in terms of bvp}
        \begin{cases}
            \Delta q_{i} = \frac{d}{r_{i}} & \text{ in $U_{i}$}\\
            \frac{\partial q_{i}}{\partial n} = 1 & \text{ on $\partial U_{i}$}\\
            q_{i} = 0 & \text{ in $\R^{d} \setminus U_{i}$},
        \end{cases}
    \end{equation}
    that is, 
    \begin{equation}
        \label{eq: explicit formula for qi}
        q_{i}(x) = \begin{cases}
            \frac{1}{2r_{i}}\left( |x - x_{i}|^{2} - r_{i}^{2} \right) & \text{ if $|x - x_{i}| \le r_{i}$}\\
            0 & \text{ else,}
        \end{cases}
    \end{equation}
    where $x_{i}$ is the center of $U_{i}$. Then for every $\varphi \in \mathcal{D}(\R^{d})$,
    \[\inn{-\Delta q_{i}}{\varphi} = \int_{\R^{d}}\nabla q_{i} \cdot \nabla \varphi = \int_{U_{i}}\nabla q_{i} \cdot \nabla \varphi = \int_{\partial U_{i}}\varphi \,d\mathcal{H}^{d - 1} - \frac{d}{r_{i}}\int_{U_{i}}\varphi.\]
    Hence, 
    \[\delta_{\partial U_{i}} - \frac{d}{r_{i}}\chi_{U_{i}} = -\Delta q_{i} \text{ in $\mathcal{D}'(\R^{d})$}.\]
    Therefore, 
    \begin{equation}
        \label{eq: evaluate the difference between Dirac measures on sphere and ball}
        \begin{split}
            \n{\sum_{i = 1}^{l}a_{i}\left( \delta_{\partial U_{i}} - \frac{d}{r_{i}}\chi_{U_{i}} \right)}_{W^{-1, \infty}(\R^{d})} &= \n{\Delta \sum_{i = 1}^{l}a_{i}q_{i}}_{W^{-1, \infty}(\R^{d})}\\
            &\le \n{\sum_{i = 1}^{l}a_{i}\nabla q_{i}}_{L^{\infty}(\R^{d})}\\
            &\le \max_{1 \le i \le l}|a_{i}|\n{\nabla q_{i}}_{L^{\infty}(\R^{d})}\\
            &\le \max_{1 \le i \le l}|a_{i}|.
        \end{split}
    \end{equation}
\end{proof}

\begin{lemma}[cf. {\cite[Proposition 7.4]{Bas25}}]
    \label{thm: continuity of thinning}
    \hfill
    \begin{enumerate}[(1)]
        \item\label{item: monotonicity of psi delta} For every fixed $0 < \delta_{1} < \delta_{2} < \infty$,
        \[\psi_{\delta_{1}}[X] \le \psi_{\delta_{2}}[X] \le \psi[X] \text{ a.s.}\]
        \item\label{item: continuity of thinning} Let $(\delta)$ be a sequence decreasing to $0$ and suppose that $h: \mathcal{K} \to [0, \infty[$ is a Borel measurable function such that $\{k: h(k) \le M\} \sub \mathcal{K}$ is bounded for every $M \in \R_{\ge 0}$, and  $\int h \,d\psi[X] < \infty \text{ a.s.}$ Then
        \[\psi_{\delta}[X](\mathcal{K}), \int h \,d\psi_
        \delta[X] \to 0 \text{ a.s.}\]
        as $\delta \searrow 0$.
    \end{enumerate}
\end{lemma}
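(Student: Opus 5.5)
For part (1), I would compare the thinned processes pointwise and then transfer the comparison to $\psi$ through the defining relation \eqref{eq: def of psi X}. For $0<\delta_1<\delta_2$ we have $\mathcal{T}_{\delta_1}(Y)\sub\mathcal{T}_{\delta_2}(Y)\sub Y$ for every admissible $Y$. This holds because a point whose nearest neighbour lies at distance $<\delta_1$ also has one at distance $<\delta_2$. Hence for every nonnegative Borel $f$ on $\mathcal{K}$ and every $A$ with $0<|A|<\infty$,
\[
\sum_{(x,k)\in\mathcal{T}_{\delta_1}(X),\,x\in A}f(k)\le\sum_{(x,k)\in\mathcal{T}_{\delta_2}(X),\,x\in A}f(k)\le\sum_{(x,k)\in X,\,x\in A}f(k).
\]
The next step is to take conditional expectations. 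The obstacle here is that $\psi_\delta[X]$ is defined by conditioning on $\mathcal{I}_{\mathcal{T}_\delta(X)}$, not on $\mathcal{I}_X$. I would avoid this by working directly with the almost sure ergodic limits of Lemma \ref{thm: averaging h over A}. Apply that lemma to $X$, $\mathcal{T}_{\delta_1}(X)$ and $\mathcal{T}_{\delta_2}(X)$ with $B=[0,1[^d$ and $h=f$ in a countable family of bounded nonnegative $f$ that determines measures on $\mathcal{K}$ (indicators of a countable $\pi$-system generating $\mathcal{B}(\mathcal{K}$), intersected with bounded sets). The averages converge a.s. to $\int f\,d\psi_{\delta_1}$, $\int f\,d\psi_{\delta_2}$ and $\int f\,d\psi$ respectively, and the inequality passes to the limit. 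A monotone class argument then gives the inequality of measures a.s.

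For part (2), monotonicity in $\delta$ from part (1) reduces the claim to showing that the limit is zero. Along the given sequence, $\int h\,d\psi_\delta$ is a.s. nonincreasing and bounded by $\int h\,d\psi<\infty$. I would then use the a.s. representation from the first paragraph:
\[
\int h\,d\psi_\delta=\lim_{n\to\infty} n^{-d}\sum_{z\in\mathcal{T}_\delta(\Phi)\cap n[0,1[^d}h(k_z).
\]
The key observation is that $\bigcap_\delta\mathcal{T}_\delta(Y)=\varnothing$ for locally finite $\mathrm{dom}\,Y$. The limit cannot be exchanged naively with $n\to\infty$, so I would take expectations. It is enough to prove $\E[\min(1,\int h\,d\psi_\delta)]\to0$, or, after localizing on the $\mathcal{I}_X$-measurable event $\{\int h\,d\psi\le L\}$ (which is invariant and hence compatible with stationarity), $\E[\chi_{\{\int h\,d\psi\le L\}}\int h\,d\psi_\delta]\to0$. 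By the conditional relation \eqref{eq: def of psi X} applied to $\mathcal{T}_\delta(X)$, this expectation is at most $\E\big[\chi_{\{\int h\,d\psi\le L\}}\sum_{z\in\mathcal{T}_\delta(\Phi)\cap[0,1[^d}h(k_z)\big]$; one first checks that the localizing event is measurable with respect to the relevant invariant $\sigma$-algebra, or otherwise conditions on $\mathcal{I}_X$ directly. The sum inside is dominated by $\sum_{z\in\Phi\cap[0,1[^d}h(k_z)$, whose restricted expectation is at most $L<\infty$, and it tends to $0$ pointwise as $\delta\searrow0$, since finitely many points in the cube eventually have distinct nearest neighbours. Dominated convergence gives the claim. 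The case $\psi_\delta[X](\mathcal{K})$ is handled in the same way with $h\equiv1$.

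The main obstacle is that $\psi[X](\mathcal{K})$ is not assumed integrable, and even $\#(\Phi\cap[0,1[^d)$ need not be. This is where the hypothesis that $\{h\le M\}$ is bounded is used. I would split the sum into marks with $h(k)\le M$ and marks with $h(k)>M$. The first part lies in a bounded set $[0,1[^d\times\{h\le M\}$, so its count has finite expectation and dominated convergence applies after localizing as above. The second part is at most $\int h\chi_{h>M}\,d\psi$ in the limit, or at most $M^{-1}\int h\,d\psi$ for the count, and this tends to $0$ as $M\to\infty$. Combining the two parts via $\limsup_\delta$ followed by $M\to\infty$ finishes the proof.
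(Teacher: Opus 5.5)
Your part (1) and the overall structure of part (2) match the paper's proof: a.s.\ monotonicity along the sequence, dominated convergence for expectations using that $\mathcal{T}_\delta(X)$ eventually has no points in $[0,1[^d$, and truncation at level $M$. The step that does not go through as written is the localization.

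Relation \eqref{eq: def of psi X} for $\mathcal{T}_\delta(X)$ identifies $\psi_\delta[X]$ only through conditioning on $\mathcal{I}_{\mathcal{T}_\delta(X)}$. Because $\mathcal{T}_\delta$ commutes with shifts, $\mathcal{I}_{\mathcal{T}_\delta(X)}\subseteq\mathcal{I}_X$, and in general the inclusion is strict. For instance, take a uniformly shifted copy of $\mathbb{Z}^d$ carrying one common random mark $\kappa\in\{1,2\}$. For $\delta\le1$ the thinned process is empty, so its invariant $\sigma$-algebra is trivial. Yet $G_L:=\{\int h\,d\psi[X]\le L\}=\{\kappa=1\}$ for $h(k)=k$ and $L=3/2$, an event of probability $1/2$.

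So your first option (checking that $G_L$ is measurable for the relevant invariant $\sigma$-algebra) fails. Your second option (conditioning on $\mathcal{I}_X$) needs an extra argument that $\psi_\delta[X]$ is also the $\mathcal{I}_X$-conditional mark intensity of $\mathcal{T}_\delta(X)$, which you do not give.

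The inequality you want is nevertheless true. Apply Lemma~\ref{thm: averaging h over A} to $\mathcal{T}_\delta(X)$, which is legitimate since $\psi_\delta[X]\le\psi[X]$. Then use Fatou, $S_j\circ\mathcal{T}_\delta=\mathcal{T}_\delta\circ S_j$, stationarity, and the a.s.\ shift invariance of $G_L$. With $F_\delta(Y):=\sum_{(z,k)\in\mathcal{T}_\delta(Y),\,z\in[0,1[^d}h(k)$ this gives
\[
\E\Bigl[\chi_{G_L}\int h\,d\psi_\delta[X]\Bigr]\le\liminf_{n\to\infty}n^{-d}\sum_{j\in\{0,\dots,n-1\}^d}\E\bigl[\chi_{G_L}F_\delta(S_jX)\bigr]=\E\bigl[\chi_{G_L}F_\delta(X)\bigr].
\]
Repaired this way, your localized argument shows $\int h\,d\psi_\delta[X]\to0$ a.s.\ using only $\int h\,d\psi[X]<\infty$ a.s. It does not need the boundedness of $\{h\le M\}$, so it proves slightly more than the paper.

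Alternatively, drop the localization, as the paper does. On the bounded mark set $K=\{h\le M\}$ the count $\#(X\cap([0,1]^d\times K))$ is integrable. Taking full expectations in \eqref{eq: def of psi X}, which is valid whatever the conditioning $\sigma$-algebra, yields $\E[\psi_\delta[X](K)]=\E[\#(\mathcal{T}_\delta(X)\cap([0,1]^d\times K))]\to0$. Monotonicity from (1) upgrades this to $\psi_\delta[X](K)\downarrow0$ a.s. The tail is then controlled pathwise by $\int h\chi_{h>M}\,d\psi_\delta[X]\le\int h\chi_{h>M}\,d\psi[X]$.

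A minor point in (1): an inequality between measures, unlike an equality, does not propagate from a $\pi$-system to the $\sigma$-algebra it generates. The monotone class step should instead use a countable algebra of bounded Borel sets, or bounded open sets together with outer regularity.
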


\begin{proof}
    \ref{item: monotonicity of psi delta}. Put $Q := [0, 1]^{d}$.  Since $\mathcal{T}_{\delta_{1}}(X) \sub \mathcal{T}_{\delta_{2}}(X) \sub X$, a.s., it holds that for every $n \ge 1$ and bounded $K \in \mathcal{B}(\mathcal{K})$,
    \[\frac{1}{|nQ|}\sum_{\substack{(z, k) \in \mathcal{T}_{\delta_{1}}(X),\\ z \in nQ}}\chi_{K}(k) 
    \le \frac{1}{|nQ|}\sum_{\substack{(z, k) \in \mathcal{T}_{\delta_{2}}(X),\\ z \in nQ}}\chi_{K}(k) 
    \le \frac{1}{|nQ|}\sum_{\substack{(z, k) \in X,\\ z \in nQ}}\chi_{K}(k).\]
    Thus, by Lemma \ref{thm: averaging h over A},
    \[\psi_{\delta_{1}}[X](K) \le \psi_{\delta_{2}}[N](K) \le \psi[N](K) \text{ a.s.}\]
    for every bounded $K \in \mathcal{B}(\mathcal{K})$.

    \ref{item: continuity of thinning}. First, we show that $\psi_{\delta}[X](K) \to 0$ a.s. for every bounded $K \in \mathcal{B}(\mathcal{K})$. Indeed, 
    \[\E[\psi_{\delta}[X](K)] = \E[\#\mathcal{T}_{\delta}(X) \cap (Q \times K)] \to 0\]
    as $\delta \searrow 0$ by dominated convergence. Hence, by \ref{item: monotonicity of psi delta}, $\psi_{\delta}[X](K) \to 0$ a.s.
    
    Therefore,
    \begin{equation}
        \begin{split}
            \limsup_{\delta \searrow 0}\int h \,d\psi_{\delta}[X] &\le \liminf_{M \to \infty} \limsup_{\delta \searrow 0} \left( M\psi_{\delta}[X](\{h \le M\}) + \int h \chi_{h \ge M}\,d\psi[X] \right)\\
            &= \liminf_{M \to \infty}\int h\chi_{h \ge M}\,d\psi[X]\\
            &\to 0
        \end{split}
    \end{equation}
    and
\begin{equation}
    \psi_{\delta}[X](\mathcal{K}) \le \psi_{\delta}[X](\{h \le 1\}) +\int_{\mathcal{K}}h \,d\psi_{\delta}[X] \to 0
\end{equation}
a.s.
\end{proof}

\begin{lemma}
    \label{thm: correspondence between N hash g and admissible sets}
    Define
    \begin{equation}
        \begin{split}
            \mathcal{N}^{\#g}_{\R^{d} \times \mathcal{K}} := &\{\mu: \text{ $\mu$ is a $\mathbb{Z}_{\ge 0} \cup \{\infty\}$-valued Borel measure on $\R^{d} \times \mathcal{K}$,}\\
            &\text{$\mu(A \times \mathcal{K}) < \infty$ for every bounded $A \in \R^{d}$,}\\
            &\text{$\mu(\{x\} \times \mathcal{K}) \in \{0, 1\}$ for every $x \in \R^{d}$}\}.
        \end{split}
    \end{equation}
    For $\tau \in \R^{d}$ and $\mu \in \mathcal{N}^{\#g}_{\R^{d} \times \mathcal{K}}$, define $S_{\tau}'\mu \in \mathcal{N}^{\#}_{\R^{d} \times \mathcal{K}}$ by 
    \begin{equation}
        (S_{\tau}'\mu)(A) = \mu(\{(x + \tau, k): (x, k) \in A\})
    \end{equation}
    for $A \in \mathcal{B}(\R^{d} \times \mathcal{K})$.
    Then the maps 
    \[\mathcal{N}^{\#g}_{\R^{d} \times \mathcal{K}} \ni N \mapsto \mathrm{spt}N \in \M^{d}_{\mathcal{K}}\]
    and
    \[\M^{d}_{\mathcal{K}} \ni Y \mapsto \sum_{p \in Y}\delta_{p} \in \mathcal{N}^{\#g}_{\R^{d} \times \mathcal{K}}\]
    are reciprocal each other. Further, if $N \in \mathcal{N}^{\#g}_{E \times \mathcal{K}}$ and $Y \in \M$ correspond via the maps above, then
    \begin{enumerate}[(1)]
        \item $N(A) = \#(Y \cap A)$ for every $A \in \mathcal{B}(\R^{d} \times \mathcal{K})$.
        \item $\mathrm{spt}N(\cdot, \mathcal{K}) = \mathrm{dom}Y$.
        \item For every $\tau \in \R^{d}$, $S_{\tau}'N$ corresponds to $S_{\tau}Y$.
    \end{enumerate}
\end{lemma}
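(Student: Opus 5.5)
We have to show that the two maps are mutually inverse, and then verify properties (1)--(3).

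\textbf{From admissible sets to measures.} Start with $Y \in \M^{d}_{\mathcal{K}}$ and set $N_Y := \sum_{p \in Y}\delta_{p}$. This is a countable sum of Dirac measures, since $\mathrm{dom}Y$ is locally finite and hence countable. So $N_Y$ is a $\mathbb{Z}_{\ge 0}\cup\{\infty\}$-valued Borel measure, and $N_Y(A) = \#(Y \cap A)$ for every Borel $A$, which is (1) once the correspondence is established. For bounded $A \sub \R^{d}$ we have $N_Y(A \times \mathcal{K}) = \#(\mathrm{dom}Y \cap A) < \infty$ by local finiteness. Since $Y$ is a graph, $N_Y(\{x\}\times\mathcal{K}) \in \{0,1\}$. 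Hence $N_Y \in \mathcal{N}^{\#g}_{\R^{d}\times\mathcal{K}}$.

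Next, $\mathrm{spt}N_Y = Y$. Each $p \in Y$ carries an atom, so it lies in the support. Conversely, let $q=(x,k) \notin Y$. By local finiteness there is a ball around $x$ meeting $\mathrm{dom}Y$ in finitely many points, at most one of which can be $x$ itself. If $x \in \mathrm{dom}Y$, its unique mark $k' \neq k$ can be avoided by shrinking the $\mathcal{K}$-neighbourhood of $k$. The resulting open neighbourhood of $q$ has $N_Y$-measure zero, so $q \notin \mathrm{spt}N_Y$. This also shows $Y$ is closed.

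\textbf{From measures to admissible sets.} Now take $\mu \in \mathcal{N}^{\#g}_{\R^{d}\times\mathcal{K}}$. The projected measure $\mu(\cdot\times\mathcal{K})$ is integer-valued, locally finite and simple on $\R^{d}$. Hence it is a sum $\sum_{x \in S}\delta_x$ over a locally finite set $S$; this is the standard atomic decomposition, obtained by dyadic partitions of bounded boxes and using that mass $\ge 1$ must concentrate at points.

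For each $x \in S$, the slice $\mu(\{x\}\times\cdot)$ is a $\{0,1\}$-valued Borel probability measure on the Polish space $\mathcal{K}$. This is the step where completeness and separability of $\mathcal{K}$ matter. Such a measure is a Dirac mass $\delta_{k_x}$: cover $\mathcal{K}$ by countably many balls of radius $1/n$; exactly one of them has full measure for each $n$; the corresponding closed balls, intersected, shrink to a single point by completeness.

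Consequently $\mu = \sum_{x\in S}\delta_{(x,k_x)}$. Its support is $Y_\mu:=\{(x,k_x)\}$ by the previous step, and $Y_\mu$ is admissible. Moreover $\sum_{p\in \mathrm{spt}\mu}\delta_p = \mu$. Together with $\mathrm{spt}N_Y = Y$, this shows the two maps are reciprocal.

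\textbf{Properties (2) and (3).} Property (2) follows because $N(\cdot,\mathcal{K}) = \sum_{x\in\mathrm{dom}Y}\delta_x$, whose support is the closed set $\mathrm{dom}Y$, closed by local finiteness. For (3), compute
\[
S'_\tau N(A) = \#\bigl(Y\cap (A+(\tau,0))\bigr) = \#\bigl(S_\tau Y\cap A\bigr),
\]
so $S'_\tau N = N_{S_\tau Y}$; in particular $S'_\tau N \in \mathcal{N}^{\#g}_{\R^{d}\times\mathcal{K}}$.

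The main obstacle is the atomic decomposition of $\mu$, specifically showing that each slice is a single Dirac mass. It must be done with care about measurability on the non-locally-compact space $\mathcal{K}$, and it is exactly where the Polish assumption on $\mathcal{K}$ is used.
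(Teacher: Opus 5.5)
Your proposal is correct and follows essentially the same route as the paper: write $N=\sum_i\delta_{(x_i,k_i)}$ with distinct, locally finite $x_i$, observe that this graph is locally finite and hence closed, so it equals $\mathrm{spt}N$, and verify (1)--(3) by direct counting; the only difference is that you justify the atomic decomposition (via the projection to $\R^{d}$ and the fact that each $\{0,1\}$-valued slice measure is a Dirac mass), whereas the paper simply asserts it. In the slice argument, only \emph{at least} one ball of radius $1/n$ need have full measure, and the intersection of the chosen closed balls is nonempty because it has measure one, so separability is what is used there, not completeness of $\mathcal{K}$.
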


\begin{proof}
    First, let $N \in \mathcal{N}^{\#g}_{\R^{d} \times \mathcal{K}}$. Then
    \[N = \sum_{0 \le i < N(\R^{d} \times \mathcal{K})}\delta_{(x_{i}, k_{i})}\]
    for some $(x_{i}, k_{i})_{i < N(\R^{d} \times \mathcal{K})}$ such that $\{x_{i}\}$ is locally finite and $x_{i} \neq x_{j}$ whenever $i \neq j$. Hence, $\{(x_{i}, k_{k})\}$ is locally finite, and thus is closed in $\R^{d} \times \mathcal{K}$. Therefore, $\mathrm{spt}N = \{(x_{i}, k_{i})\}$, and it is admissible in the sense of Definition \ref{def: mpp}. Further, 
    \[N = \sum_{0 \le i < N(\R^{d} \times \mathcal{K})}\delta_{(x_{i}, k_{i})} = \sum_{p \in \mathrm{spt}N}\delta_{p}.\]
    From the last equality, $N(A) = \#(A \cap \mathrm{spt}N)$ for every $A \in \mathcal{B}(\R^{d} \times \mathcal{K})$, and 
    \[S_{\tau}'N = \sum_{0 \le i < N(\R^{d} \times \mathcal{K})}\delta_{(x_{i} - \tau, k_{i})} = \sum_{p \in \{(x - \tau, k): (x, k) \in \mathrm{spt}N\}}\delta_{p} = \sum_{p \in S_{\tau}(\mathrm{spt}N)}\delta_{p}.\]

    Next, let $Y \in \M^{d}_{\mathcal{K}}$ and put $N := \sum_{p \in Y}\delta_{p}$. Then for every bounded $A \in \mathcal{B}(\R^{d})$, 
    \begin{equation}
        \begin{split}
            N(A \times \mathcal{K}) &= \#\{(x, k) \in Y: x \in A\}\\
            &= \#(A \cap \mathrm{dom}Y) < \infty.
        \end{split}
    \end{equation}
    Further, for every $x \in \R^{d}$, 
    \begin{equation*}
        N(\{x\} \times \mathcal{K}) = 
        \begin{cases}
            1 & \text{if $x \in \mathrm{dom}Y$}\\
            0 & \text{else}.
        \end{cases}
    \end{equation*}
    Hence, $N \in \mathcal{N}^{\#g}_{\R^{d} \times \mathcal{K}}$ and $\mathrm{spt}N(\cdot, \mathcal{K}) = \mathrm{dom}Y$. Since $N(A) = \#(Y \cap A)$ for every $A \in \mathcal{B}(\R^{d} \times \mathcal{K})$, $\mathrm{spt}N = Y$.
\end{proof}

\bibliography{biblio}
\bibliographystyle{abbrv}
\end{document}